\newcommand{\Fraisse}{\text{Fra\"{i}ss\'{e}}}
\newcommand{\RCA}{\text{RCA}_0}
\newcommand{\WKL}{\text{WKL}_0}
\newcommand{\ACA}{\text{ACA}_0}
\newcommand{\Sk}{\mathrm{Sk}}
\newcommand{\tuple}[1]{\overline{#1}}
\newcommand{\seq}[2]{ (#1)_{#2} }
\newcommand{\pres}[2]{\langle #1 \mid #2 \rangle}
\newcommand{\into}{\hookrightarrow}
\newcommand{\restricted}{\upharpoonright}
\newcommand{\bland}{\bigwedge}
\newcommand{\qftp}{\mathrm{qftp}}
\newcommand{\cC}{\mathcal{C}}
\newcommand{\cK}{\mathcal{K}}
\newcommand{\cS}{\mathcal{S}}
\newcommand{\bK}{\mathbb{K}}
\newcommand{\Tdega}{\mathbf{a}}
\newcommand{\Tdegb}{\mathbf{b}}
\declaretheoremstyle[
headfont=\normalfont\bfseries,
parent=subsection,
bodyfont=\normalfont,
qed=$\dashv$,
]{def2}
\declaretheorem[style = def2, title = Definition]{defn}
\theoremstyle{definition}
\newtheorem{quest}[defn]{Question}
\newtheorem{notation}[defn]{Notation}
\theoremstyle{remark}
\newtheorem*{rem}{Remark}
\newtheorem{no-rem}[defn]{Remark}
\theoremstyle{theorem}
\newtheorem{theorem}[defn]{Theorem}
\newtheorem{cor}[defn]{Corollary}
\newtheorem{prop}[defn]{Proposition}
\newtheorem{lemma}[defn]{Lemma}
\newtheorem{obs}[defn]{Observation}
\title{On Effective Constructions of Existentially Closed Groups}
\author{I Scott}
\thanks{This work forms part of the author's doctoral thesis at the University of Chicago, under the wise and generous supervision of Denis Hirschfeldt and Maryanthe Malliaris.}
\address{Department of Mathematics\\
	University of Chicago\\
	5734 S University Ave\\
	Chicago, IL 60637}
\email[I Scott]{iscott@uchicago.edu}
\begin{document}
	\maketitle
	
	\begin{abstract}
		Existentially closed groups are, informally, groups that contain solutions to every consistent finite system of equations and inequations.  They were introduced in 1951 in an algebraic context and subsequent research elucidated deep connections with group theory and computability theory.  We continue this investigation, with particular emphasis on illuminating the relationship with computability theory.  
		
		In particular, we show that existentially closed groups can be built at the level of the halting problem and that this is optimal.  Moreover, using the the theory of the enumeration degrees and some work of Martin Ziegler in computable group theory, we show that the previous result relativises in a somewhat subtle way.  We then tease apart the complexity contributed by ``global'' and ``local'' structure, showing that the finitely generated subgroups have complexity at the level of the PA degrees.  Finally, we investigate the computability-theoretic complexity of omitting the non-principal quantifier-free types from a list of types, from which we obtain an upper bound on the complexity of building two existentially closed groups that are ``as different as possible''.  
	\end{abstract}
	
	\section*{Overview of results}
	A group $M$ is called existentially closed when any finite system of equations and inequations with parameters from $M$ that is solved in a group extending $M$ already has a solution in $M$ (see Definition \ref{def of ec group}).  While the idea of existential closure had been floating around for a while --- some, e.g. \cite{Hodges-model-theory},  trace it back to the first suggestions of the complex numbers --- the definition of an existentially closed group was first proposed by W.R.\ Scott in \cite{Scott}, probably as a group-theoretic analogue to the notion of algebraic closure for fields.  Within a decade, model theorists, led by Abraham Robinson, had isolated the notion of existential closure for general theories and had developed their own abstract ways of building and studying them.  This model-theoretic perspective greatly influenced the study of existentially closed groups, as can be seen in \cite{omitting-qf-types}, \cite{on-ac-groups}, and \cite{Hodges}, among others.  From both the group- and model-theoretic perspectives, we see computability theory naturally arising as a way to stratify and measure the complexity of existentially closed groups.  This is perhaps most thoroughly illustrated by Ziegler in \cite{Ziegler}.  Ziegler characterised existentially closed groups via a computability-theoretic invariant by understanding how different finitely generated groups can fit together to form an existentially closed group.  This paper seeks to understand the computability-theoretic complexity of the existentially closed groups themselves, shedding further light on the structure of these groups, and how they relate to each other.\\
	
	We lay out here an overview of our main results for those more familiar with the language of computability theory.  More context and precise statements can be found in the next section.
	
	Our first main result is that building an existentially closed group is computability-theoretically equivalent to the halting problem (Theorem \ref{existence of ec groups is at level of hp}).  This result relativises in an unexpected way (Theorem \ref{relativisation of existence of ec group is level of hp}), involving interactions with the enumeration degrees and ideas of Ziegler relating to his relativisation of Higman's embedding theorem.  We also show that this complexity is not always reflected in the finitely generated subgroups of an existentially closed group, and quantify the complexity that they contribute (Theorem \ref{every scott set gives an ec group}), which gives us a new characterisation of the PA degrees (Theorem \ref{char of PA degrees}).  Finally, call two existentially closed groups ``$\exists_1$-atomic'' if the only quantifier-free types realised in both are isolated by an existential formula.  We show that there is such a pair both of which are computable in any $0'$-c.e.\ set that is strictly above $0'$.

	\section*{Introduction}
	In this section, we introduce the main objects of study, provide some mathematical context for the paper, and give formal statements of our main results.
	
	\begin{defn}
		\label{def of ec group}
		An existentially closed group $M$ is a group such that for every quantifier-free formula $\varphi(\tuple{x}, \tuple{m})$, where $\tuple{m} \in M$\footnote{Throughout the paper, we will write ``$\tuple{m} \in M$'' as shorthand for ``$\tuple{m} \in M^{\ell(\tuple{m})}$''.}, if there is a group $N$ containing $M$ as a subgroup with $N \models \exists \tuple{x} \varphi(\tuple{x}, \tuple{m})$, then $M \models \exists \tuple{x} \varphi(\tuple{x}, \tuple{m})$.
	\end{defn}
	
	We will be interested in the complexity of existentially closed groups.  From a computability-theoretic standpoint, a natural way to measure the complexity of an algebraic object is to look at the Turing degree of its quantifier-free diagram.  In the case of groups, this generalises the well-studied notion of the degree of the word problem, so this fits into a natural lineage of study on both the computability-theoretic and algebraic sides.  (Aside: as we will discuss in the next section, the degree of a word problem is only well-defined for finitely generated groups and, as a result, word problems of infinitely generated groups --- which existentially closed groups always are --- are not often studied in the group theory literature.  Computability theory bypasses this issue by instead studying the class of degrees that compute some listing of the group.)  The interactions between the word problems of finitely generated groups and the degrees of the existentially closed groups they generate will be a theme of the present work.
	
	\textit{As we take a computability-theoretic approach, from now on all existentially closed groups will implicitly be assumed to be countable.}\\
	
	We now lay out some of the history of existentially closed groups that forms the backdrop for the rest of the paper.  The first result is one of the earliest major theorems on existentially closed groups, which highlights how they blend algebra, computability theory, and model theory.
	
	In what follows, a finitely generated group $G$ is \emph{$\exists_1$-isolated} if there is a quantifier-free formula of group theory $\varphi(\tuple{x}, \tuple{y})$ such that whenever $H$ is a group such that $H \models \exists \tuple{x} \varphi(\tuple{x}, \tuple{a})$ for some $\tuple{a} \in H$, then $\langle \tuple{a} \rangle$ generates a copy of $G$ in $H$.
	
	\begin{theorem}[\cite{Neumann}, \cite{omitting-qf-types}, \cite{Rips}]
		\label{macintyre-neumann-rips}
		Let $G$ be a finitely generated group.  Then the following are equivalent:
		
		\begin{enumerate}
			\item $G$ embeds in every existentially closed group.
			\item $G$ has a solvable word problem.
			\item $G$ is $\exists_1$-isolated.
		\end{enumerate}
	\end{theorem}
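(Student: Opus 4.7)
The plan is to prove the cycle $(3) \Rightarrow (1) \Rightarrow (2) \Rightarrow (3)$: the first arrow is a quick application of the definition of existential closure, $(2) \Rightarrow (3)$ is a careful application of Higman's embedding theorem, and $(1) \Rightarrow (2)$, which I expect to be the main obstacle, requires building an existentially closed group that omits a prescribed finitely generated group of unsolvable word problem.

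For $(3) \Rightarrow (1)$: let $\varphi(\tuple{x}, \tuple{y})$ isolate $G$ and fix a generating tuple $\tuple{g}$ for $G$. Since $G$ itself must satisfy $\exists \tuple{x}\, \varphi(\tuple{x}, \tuple{g})$ (else the isolating hypothesis is vacuous and $G$ is trivial, which embeds into every group), pick witnesses $\tuple{h} \in G$. For any existentially closed $M$, form the coproduct $M * G$; then $M * G \models \exists \tuple{y}\, \exists \tuple{x}\, \varphi(\tuple{x}, \tuple{y})$ via $\tuple{g}$ and $\tuple{h}$. This is a parameter-free existential sentence, so the existential closure of $M$ in $M * G$ gives $\tuple{a}, \tuple{b} \in M$ witnessing it. The isolating property then forces $\langle \tuple{a} \rangle \cong G$ inside $M$.

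For $(2) \Rightarrow (3)$: since $G$ has a solvable word problem it is recursively presented, and Higman's embedding theorem places it inside a finitely presented overgroup $F = \langle \tuple{y}, \tuple{z} \mid r_1, \ldots, r_k \rangle$ with $\tuple{g} \mapsto \tuple{y}$. Following Macintyre, the construction can be strengthened so that the embedding is \emph{rigid}: any group containing a tuple $\tuple{a}$ that satisfies $\exists \tuple{x}\, \bigwedge_i r_i(\tuple{a}, \tuple{x}) = 1$ in fact generates a copy of $G$, not a proper quotient. The formula $\varphi(\tuple{x}, \tuple{y}) := \bigwedge_i r_i(\tuple{y}, \tuple{x}) = 1$ then $\exists_1$-isolates $G$. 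The technical heart is engineering this rigidity into the Higman embedding; this is where solvability of the word problem is used, to encode enough ``non-identity'' information to prevent a collapse.

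For $(1) \Rightarrow (2)$, argue by contraposition: given $G$ with unsolvable word problem, construct an existentially closed $M = \bigcup_n M_n$ that does not contain $G$. The construction alternates two kinds of stages. Closure stages adjoin, via HNN extensions or amalgamated products, witnesses for each consistent existential formula over $M_n$. Diagonalisation stages, for each finite tuple $\tuple{a}$ enumerated in $M$, eventually force a group word $w$ whose truth value at $\tuple{a}$ distinguishes $\langle \tuple{a} \rangle$ from $G$. Arranging this so that $M$ has overall complexity strictly below the word problem of $G$ forces $G \not\cong \langle \tuple{a} \rangle$ for every tuple $\tuple{a}$, so $G$ does not embed into $M$. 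The main obstacle is reconciling the two families of requirements: closure stages can introduce tuples that begin to look like partial embeddings of $G$, and the diagonalisation must remain robust against the existential witnesses introduced at later stages. This delicate priority-style argument is Rips's contribution and the crux of the theorem.
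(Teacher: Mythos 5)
The paper quotes this theorem from the literature without its own proof, so your proposal is measured against the standard arguments; your $(3)\Rightarrow(1)$ step is the usual free-product argument and is correct, but the other two directions each contain a genuine gap. For $(2)\Rightarrow(3)$: the formula you propose, $\varphi(\tuple{x},\tuple{y}) = \bigwedge_i r_i(\tuple{y},\tuple{x})=1$, is a conjunction of equations, and every group word evaluates to $1$ on the all-identity tuple; hence every group $H$ satisfies $\exists\tuple{x}\,\varphi(\tuple{x},\tuple{1})$, and your isolation claim would force $\langle 1,\dots,1\rangle=\{1\}$ to be a copy of $G$. So no purely positive existential formula can $\exists_1$-isolate a nontrivial group, and no ``rigidification'' of the Higman embedding can repair this. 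The isolating formula must contain an inequation, and the standard route is the Boone--Higman theorem (which, unlike Higman's theorem, genuinely uses solvability of the word problem rather than mere recursive presentability): embed $G$ into a simple group $S$ which embeds into a finitely presented $F=\pres{\tuple{y},\tuple{z}}{r_1,\dots,r_k}$, fix a word $w$ representing a nontrivial element of $S$, and take $\varphi = \bigwedge_i r_i = 1 \,\land\, w\neq 1$; any tuple satisfying $\exists\tuple{x}\varphi$ then generates a quotient of $F$ whose kernel meets $S$ trivially (by simplicity), hence restricts to an embedding of $G$.

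For $(1)\Rightarrow(2)$: your plan to ``arrange $M$'s overall complexity strictly below the word problem of $G$'' cannot work in general. By Theorem \ref{every ec group computes hp} of this paper, every existentially closed group computes $0'$, so for any $G$ with $0 <_{\mathrm{T}} W(G)\leq_{\mathrm{T}} 0'$ \emph{every} existentially closed group computes $W(G)$ --- yet such a $G$ must still be omitted by some existentially closed group. The correct mechanism is Macintyre's omitting-types/forcing argument (the paper attributes exactly this implication to \cite{omitting-qf-types}, not to Rips): for each tuple $\tuple{a}$ and each finite condition $p$ accumulated so far, one shows there is a finite extension of $p$ deciding some $w(\tuple{a})$ oppositely to its status in $W(G,\tuple{g})$; if no such extension existed, one could compute $W(G)$ from the effectively presented consistency relation on finite conditions, contradicting unsolvability. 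Note also that there is no injury to manage: equations and inequations, once true in some $M_n$, persist in every supergroup $M_m \geq M_n$ and in $M$, so later closure stages cannot undo a diagonalisation already performed. This is plain forcing with dovetailed dense sets, not a priority argument, and the ``delicate reconciliation'' you identify as the crux is not where the difficulty lies.
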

	
	While most of the equivalences were proved using group-theoretic arguments, Macintyre (\cite{omitting-qf-types}) used the newly developed model-theoretic forcing to show that, for any finitely generated $G$ with unsolvable word problem, there is an existentially closed $M$ that does not contain a copy $G$.  To do so, he proved a much more general theorem.  In fact, the result in \cite{omitting-qf-types}, Section 3, is even more general than what we state here.  (We use $\leq_{\mathrm{T}}$ for Turing reducibility; this should cause no confusion with the theory $T$ once pointed out.)
	
	\begin{theorem}[\cite{omitting-qf-types}]
		Let $T$ be a decidable $\forall\exists$ theory and let $\Psi$ and $\Phi$ be quantifier-free $T$-types such that $\Psi \nleq_{\mathrm{T}} \Phi$.  Then there is an existentially closed $M \models T$ that realises $\Phi$ and omits $\Psi$.
	\end{theorem}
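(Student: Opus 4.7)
The plan is to build $M$ by a Henkin-style (Robinson) forcing construction. Introduce a countable collection of new constants $\{c_i : i < \omega\}$, designate a tuple $\tuple{c}_0$ among them to realise $\Phi$, and take forcing conditions to be finite sets $p$ of quantifier-free formulas in these constants such that $T \cup p \cup \Phi(\tuple{c}_0)$ is consistent. The upward union along a sufficiently generic chain will be the quantifier-free diagram of the desired $M$.

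I would meet four families of dense requirements: Henkin conditions, so that every element of $M$ is named by some $c_i$; $\Phi$-realisation requirements, placing each $\varphi \in \Phi$ as $\varphi(\tuple{c}_0)$ in $p_n$ for some $n$; Robinson's existential-closure requirements, which guarantee the generic structure is existentially closed; and the omitting requirements, demanding that for each tuple of constants $\tuple{c}$, some stage $n$ carries $\neg\psi(\tuple{c}) \in p_n$ with $\psi \in \Psi$. The first three families are dense by standard arguments --- the $\Phi$-realisation requirement follows from consistency of $\Phi$, and the density of the e.c.\ requirements is the content of Robinson forcing. Since the forcing is countable and we have only countably many requirements, a decreasing chain of conditions meeting them all exists.

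The crux is density of the omitting requirement. Fix a condition $p$ and a tuple of constants $\tuple{c}$, and suppose for contradiction that for no $\psi \in \Psi$ is $p \cup \{\neg\psi(\tuple{c})\}$ again a valid condition. Then for every $\psi \in \Psi$, $T \cup p \cup \Phi(\tuple{c}_0) \cup \{\neg\psi(\tuple{c})\}$ is inconsistent, so $T \cup p \cup \Phi(\tuple{c}_0) \models \psi(\tuple{c})$. Let
\[
\tilde\Psi := \{\psi : T \cup p \cup \Phi(\tuple{c}_0) \models \psi(\tuple{c})\}.
\]
By compactness (passing for each $\psi$ to a finite $\Phi_0 \subseteq \Phi$) together with decidability of $T$, this set is c.e.\ in $\Phi$. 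By hypothesis $\Psi \subseteq \tilde\Psi$; since $T \cup p \cup \Phi(\tuple{c}_0)$ is consistent, so is $\tilde\Psi$; and $\Psi$ is complete as a quantifier-free type. Any $\psi \in \tilde\Psi \setminus \Psi$ would have $\neg\psi \in \Psi \subseteq \tilde\Psi$, contradicting consistency, so $\tilde\Psi = \Psi$. Thus $\Psi$ is c.e.\ in $\Phi$; completeness of $\Psi$ then makes it co-c.e.\ in $\Phi$ as well, giving $\Psi \leq_{\mathrm{T}} \Phi$, a contradiction.

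The main obstacle is this omission argument: one must carefully disentangle the finite information encoded in $p$ from the infinite-but-consistent information carried by $\Phi(\tuple{c}_0)$, and then exploit completeness of $\Psi$ together with decidability of $T$ to convert a failure of density into the reduction $\Psi \leq_{\mathrm{T}} \Phi$.
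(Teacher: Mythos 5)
Your proposal is correct and follows essentially the route the paper attributes to Macintyre: the paper does not reproduce a proof but cites this as an application of model-theoretic (Robinson) forcing, and your construction --- finite forcing with $\Phi(\tuple{c}_0)$ as a side condition, plus the omitting-types density argument converting a failure of density into the reduction $\Psi \leq_{\mathrm{T}} \Phi$ via the c.e.-in-$\Phi$ set of consequences and completeness of $\Psi$ --- is exactly that argument. The only point worth making explicit is that the hypothesis $\Psi \nleq_{\mathrm{T}} \Phi$ presupposes the types are complete quantifier-free types (as they are in the paper's intended application to word problems), which is precisely where your step $\tilde\Psi = \Psi$ uses completeness.
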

	
	It follows from this theorem that the original result relativises:  if $G$ and $H$ are finitely generated groups with word problems $W(G)$ and $W(H)$ respectively and $W(G) \nleq_{\mathrm{T}} W(H)$, then there is an existentially closed group that contains $H$ and not $G$.
	
	It is natural to ask whether the converse holds: namely if $W(G) \leq_{\mathrm{T}} W(H)$, does $G$ embed into every existentially closed group that $H$ does?  Ziegler showed the answer is ``sort of'': this holds for a somewhat strengthened notion of computability, $*$-reducibility, written $\leq^*$ (see brief discussion after the result).
	
	\begin{theorem}[\cite{Ziegler}]
		Let $G$ and $H$ be finitely generated groups with word problems $W(G)$ and $W(H)$ respectively.  Then the following are equivalent:
		\begin{enumerate}
			\item $W(G) \leq^* W(H)$.
			\item $G$ embeds into every existentially closed group that $H$ does.
		\end{enumerate}
	\end{theorem}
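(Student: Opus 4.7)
The plan is to prove the two directions separately, invoking Ziegler's relativised version of Higman's embedding theorem on one side and a model-theoretic forcing argument on the other.

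For $(1) \Rightarrow (2)$, suppose $W(G) \leq^* W(H)$ and let $M$ be an existentially closed group containing $H$. The strategy is to use a relativised Higman-style embedding: the condition $W(G) \leq^* W(H)$ should be equivalent to $G$ embedding into a group $K \supseteq H$ that is, in an appropriate sense, finitely presented over $H$ (constructible from $H$ together with finitely many generators and HNN-type relations whose complexity is controlled by $W(H)$). Since $M$ is existentially closed, any finite fragment of such a presentation witnessing the embedding of $\langle \tuple{g} \rangle \cong G$ into $K$ must already be realised over $H$ inside $M$; a standard compactness/back-and-forth argument then promotes these finite witnesses to a full embedding $G \hookrightarrow M$.

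For $(2) \Rightarrow (1)$, I would argue by contrapositive. Assume $W(G) \not\leq^* W(H)$ and build an existentially closed $M$ containing $H$ such that no tuple from $M$ generates a copy of $G$. This is a model-theoretic forcing construction in the spirit of Macintyre's argument from \cite{omitting-qf-types}: enumerate the candidate tuples $\tuple{a}$ from $M$ and, at each stage, diagonalise against $\tuple{a}$ generating a copy of $G$ by forcing a quantifier-free formula that is consistent over $H$ but inconsistent with $\qftp(G)$. The obstruction to carrying out such a diagonalisation at every stage is exactly captured by the failure of $W(G) \leq^* W(H)$: if every attempt were blocked, one could read off a $*$-reduction from $W(H)$ to $W(G)$ by tracking which formulas survive the forcing.

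The main difficulty, I expect, is on the backward direction. Macintyre's construction diagonalises against $\leq_{\mathrm{T}}$, whereas here we must diagonalise against the finer reducibility $\leq^*$, so we have to achieve the omission in strictly more cases. This presumably forces the forcing notion to be more refined than Macintyre's, tracking not only Turing-computable information but the additional enumeration-theoretic data built into $\leq^*$ --- exactly the content of Ziegler's machinery around benign subgroups and the relativisation of Higman's embedding theorem. A secondary subtlety on the forward direction is that the Higman-style embedding we use must be finitary enough that its witnessing diagram is captured by $\exists_1$-statements over $H$; a naive embedding of $G$ into an arbitrary supergroup of $H$ would not be absorbed by existential closure alone.
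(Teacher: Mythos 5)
First, note that the paper does not prove this statement at all: it is quoted from Ziegler's book, and the paper explicitly defers the definition of $\leq^*$ and the proof to \cite[Chapter III]{Ziegler}. So there is no in-paper proof to compare against; what follows is an assessment of your sketch on its own terms.

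Your decomposition into the two directions is the right one, and the backward direction ($(2)\Rightarrow(1)$ by a Macintyre-style omitting-types forcing against a finer reducibility) is the correct shape of Ziegler's argument, though as written it is only a statement of intent. The genuine gap is in your forward direction. You assert that $W(G) \leq^* W(H)$ "should be equivalent to $G$ embedding into a group $K \supseteq H$ that is finitely presented over $H$". That equivalence is the relativised Higman embedding theorem, and it characterises \emph{enumeration} reducibility $W(G) \leq_{\mathrm{e}} W(H)$, not $\leq^*$, which is strictly finer. More importantly, even granting such a $K$, your next step fails: if $M$ is existentially closed over $H$ and you realise the finitely many defining relations of $K$ over $H$ inside $M$, the tuple you obtain generates only a \emph{quotient} of $K$, since existential closure lets you impose finitely many equations and inequations but not the infinitely many inequations $w \neq 1$ for $w \notin W(K)$. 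The subsequent "compactness/back-and-forth" promotion also does not work: existential closure is a finitary, local property, and the witnesses for different finite fragments of the diagram of $G$ need not cohere into a single tuple generating a copy of $G$. This is precisely the phenomenon that forces the theorem to be stated with $\leq^*$ rather than $\leq_{\mathrm{e}}$ or $\leq_{\mathrm{T}}$.

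The missing idea is a relativisation of the $\exists_1$-isolation clause of the Neumann--Macintyre--Rips theorem: from $W(G) \leq^* W(H)$ one must produce a \emph{single} finite system of equations and inequations with parameters from $H$, consistent over $H$, every solution of which in every supergroup of $H$ generates a copy of $G$. Once such a formula exists, existential closure of $M$ immediately yields the embedding, with no compactness step needed. Constructing that formula is the hard group-theoretic content of Ziegler's proof (via his analysis of benign subgroups and the relativised Higman machinery), and it is exactly the step your sketch elides. As a secondary point, in the backward direction you speak of reading off "a $*$-reduction from $W(H)$ to $W(G)$"; the reduction you need to extract when the diagonalisation is blocked is $W(G) \leq^* W(H)$, so take care with the direction.
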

	
	While we will not need it for the present paper, the definition of $\leq^*$ can be found in \cite[Chapter III.1]{Ziegler}.  The key takeaways are:
	
	\begin{itemize}
		\item $*$-reducibility refines Turing-reducibility and enumeration-reducibility (which will be discussed further in Section \ref{subsection: enumeration reducibility}); i.e., $A \leq^* B$ implies $A \leq_{\mathrm{T}} B$ and $A \leq_{\mathrm{e}} B$.
		\item $\leq^*$ is defined purely computability-theoretically --- without reference to groups.
	\end{itemize}
	
	Thus, Ziegler's theorem shows that the property of embedding into every existentially closed supergroup of a given group is really a computability-theoretic notion.   Moreover, since countable existentially closed groups are determined up to isomorphism by their finitely generated subgroups, it veers towards a classification (and, in fact, Ziegler does show that countable existentially closed groups are in one-to-one correspondence with what he calls \emph{algebraically closed ideals} in the $*$-degrees --- see \cite{Ziegler}, Chapter III.3).
	
	Going back to Theorem \ref{macintyre-neumann-rips}, another natural question is whether there is any existentially closed group all of whose finitely generated subgroups have solvable word problems.  Macintyre, relying on a result of Miller, answered that question in the negative.
	
	\begin{theorem}[\cite{on-ac-groups}; based on \cite{Miller}]
		Let $M$ be an existentially closed group.  Then $M$ has a finitely generated subgroup with an unsolvable word problem.
	\end{theorem}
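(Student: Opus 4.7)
The plan is to use the existential closure of $M$ to pull a ``bad'' witness tuple into $M$, thereby producing a finitely generated subgroup whose word problem inherits unsolvability. Start from a finitely presented group $G_0 = \pres{\tuple{x}}{r_1, \ldots, r_k}$ with unsolvable word problem (Novikov--Boone). The naive move is to apply existential closure to $\varphi_0(\tuple{x}) := \bigwedge_i r_i(\tuple{x}) = 1$: the free product $M * G_0$ extends $M$ and contains a witness for $\exists \tuple{x}\, \varphi_0$, so by existential closure $M$ itself has a witness $\tuple{a}$, and the assignment $x_i \mapsto a_i$ yields a surjection $G_0 \onto \langle \tuple{a}\rangle$. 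The catch is that $\langle \tuple{a}\rangle$ could be a quotient of $G_0$ with \emph{solvable} word problem, so finer control is required.

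The key input from \cite{Miller} I would extract is a finitely presented group $G = \pres{\tuple{x}}{r_1, \ldots, r_k}$ together with distinguished nontrivial elements $w_1, \ldots, w_\ell \in G$ such that any quotient $G/N$ in which no $w_j$ becomes trivial still has unsolvable word problem. This repackages the obstruction to solvability of the word problem into finitely many atomic and negated-atomic conditions on a generating tuple, which is exactly the sort of data that interacts cleanly with existential closure. Setting
\[
\varphi(\tuple{x}) \; := \; \bigwedge_{i} r_i(\tuple{x}) = 1 \;\; \wedge \;\; \bigwedge_{j} w_j(\tuple{x}) \neq 1,
\]
we obtain a quantifier-free formula satisfied in $G$ itself on the generating tuple, hence satisfied in $M * G \supseteq M$. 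Existential closure then produces $\tuple{a} \in M$ with $M \models \varphi(\tuple{a})$; the map $x_i \mapsto a_i$ exhibits $\langle \tuple{a}\rangle$ as a quotient of $G$ killing none of the $w_j$, and so $\langle \tuple{a}\rangle$ has unsolvable word problem and is the desired finitely generated subgroup of $M$.

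The main obstacle in this plan is precisely the Miller-style ingredient: one needs unsolvability of the word problem to be forced by a \emph{finite} quantifier-free diagram, rather than by the full (infinite) quantifier-free type of a generating tuple of $G_0$. Realising the full quantifier-free type of $G_0$ in $M$ would be a type-omitting task, which existential closure alone does not deliver --- indeed, by Theorem \ref{macintyre-neumann-rips}, $G_0$ need not embed in $M$ at all. Once the right finitely presented $G$ with its marked nontrivial words is in hand, the existential-closure step is essentially automatic.
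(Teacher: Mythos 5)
Your argument is correct and is essentially the argument the paper relies on (and sketches later in its remark on Miller's theorem): take Miller's finitely presented group all of whose nontrivial quotients have unsolvable word problem, express ``satisfies the relators and keeps some fixed word nontrivial'' as a single quantifier-free formula realised in $M * G$, and use existential closure to pull a witness tuple into $M$, whose generated subgroup is then a nontrivial quotient of $G$. You correctly identified that the whole content lies in Miller's finitary packaging of unsolvability; the existential-closure step is exactly as in the paper.
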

	
	Thus, in particular, no existentially closed group is computable.  
	
	In fact, a close reading of Miller's argument shows that every existentially closed group contains a finitely generated subgroup of \emph{PA degree} (see Definition \ref{PA deg-def}).  It is known that there are no computable PA degrees, but that they can be ``quite close to being computable''. 
	
	Despite the connections with computability noticed by Macintyre, Ziegler, and others, there had been no systematic investigations into the computability theory of the existentially closed groups themselves.  Indeed, to summarise the historical development outlined above: If one wants to understand countable existentially closed groups from a purely algebraic standpoint, it suffices to understand their finitely generated subgroups, as this determines the existentially closed group up to isomorphism.  The thrust of many earlier results can be understood from this point of view: at least when investigating the computability-theoretic properties of existentially closed groups, earlier authors focused on their finitely generated subgroups.
	
	In the present work, we take a step back, treating the existentially closed groups as objects of computability-theoretic study in their own right.	 Through our investigations into the computability theory of existentially closed groups, a stratification emerges that was not visible before --- the complexity arising from the finitely generated subgroups and the complexity coming from how they fit together.  \\
	
	Our first main theorem, Theorem \ref{existence of ec groups is at level of hp}, answers the question of the minimal complexity of an existentially closed group.
	
	\begin{restatable*}{theorem}{MainOne}
		\label{existence of ec groups is at level of hp}
		The existence of an existentially closed group is at the level of the halting problem; i.e., every existentially closed group computes $0'$ and there is an existentially closed group which is computed by $0'$.
	\end{restatable*}
	
	It is then shown, via a foray into the interaction between Turing degrees, enumeration degrees, and algebra, that Theorem \ref{existence of ec groups is at level of hp} relativises, although in a somewhat subtle way.  We show that the minimum degree of an existentially closed group containing a given finitely generated group $G$ is the Turing degree of $K_{W(G)}$, a set defined in Section \ref{subsection: enumeration reducibility}.  The upshots of this theorem are that this minimal degree always exists and is obtained, and that it is not merely the Turing jump of $W(G)$.  Theorem \ref{relativisation of existence of ec group is level of hp} is a more precise statement of this result.
	
	Our next main theorem illustrates a stratification in the origins of the complexity of the quantifier-free diagram of existentially closed groups, showing that Theorem \ref{existence of ec groups is at level of hp} is not a local result.  We write $\Sk(M)$ for the collection of finitely generated subgroups of $M$.
	
	\begin{restatable*}{theorem}{MainTwo}
		\label{every scott set gives an ec group}
		Let $\Tdega$ be a PA degree.  Then there is an existentially closed group $M$ such that for every $G \in \Sk(M)$, $W(G) \leq_{\mathrm{T}}\Tdega$.
	\end{restatable*}
	
	In other words, since there are PA degrees that are in a technical sense ``considerably below'' $0'$, much of the complexity of existentially closed groups is not necessarily witnessed at the finitely generated level.  Moreover, it shows that the Macintyre argument, while not capturing the full complexity of existentially closed groups, was the best that could be done by looking at their finitely generated subgroups.  
	
	From Theorem \ref{every scott set gives an ec group}, we obtain a new characterisation of PA degrees, which form an important class of Turing degrees (see Definition \ref{PA deg-def}).  Again, $\Sk(M)$ is the collection of finitely generated subgroups of $M$.
	
	\begin{restatable*}{theorem}{MainThree}
		\label{char of PA degrees}
		A Turing degree $\Tdega$ is a PA degree iff there is an existentially closed group $M_{\Tdega}$ such that $\Tdega \geq_{\mathrm{T}}W(G)$ for every $G \in \Sk(M_{\Tdega})$.
	\end{restatable*}
	
	Finally, we turn to the problem of constructing two existentially closed groups whose only common subgroups are those with solvable word problems --- which, as we have seen, embed in every existentially closed group.  On the basis of Theorem \ref{macintyre-neumann-rips}, we call a pair of such groups \emph{relatively atomic}.  We give an upper bound on the degrees of these groups, but a precise characterisation remains unknown.
	
	\begin{restatable*}{cor}{MainFour}
		\label{rel atomic groups below ce}
		Let $A \gneqq_{\mathrm{T}} 0'$ be $0'$-c.e.  Then there are $A$-computable existentially closed groups which are relatively atomic.
	\end{restatable*}
	
	The structure of the paper is as follows:
	
	Section 1 lays out the background in group theory and logic that will be assumed throughout the rest of the paper.  Sections 2 and 3 develop a construction for building existentially closed groups effectively, following work of \cite{Ziegler} and \cite{effective-fraisse}.  Then Sections 4 and 5 leverage this construction and group-theoretic arguments to establish that the existence of existentially closed groups is exactly at the level of the halting problem, and show how this result relativises.  Section 6 shifts focus to the subgroups of existentially closed groups, and shows that the results above do not extend to them.  Section 7 extends the analysis of constructing existentially closed groups to constructing pairs of them that are ``as different as possible''.
	
	\section{Background}
	To keep this paper relatively self-contained and to fix notation, we will lay out the mathematical preliminaries needed for the rest of the paper.  We assume familiarity with basic model theory, computability theory, and group theory.  For more details on these, we recommend \cite{Chang-Keisler}, \cite{Soare}, and \cite{Rotman} respectively.
	
	We will adhere to the following conventions throughout the paper:
	\begin{itemize}
		\item Existentially closed groups are implicitly assumed to be countable.
		\item Unless otherwise mentioned, ``computable'' and its derivatives will always refer to Turing computability.
		\item $L$ is the language $\{1, \cdot, \bullet^{-1}\}$ for group theory.  (Here $\bullet^{-1}$ indicates that we use the notation $g^{-1}$ for ``$g$-inverse''.)
	\end{itemize}
	
	Most notation and definitions are standard; however, we point out that we use ``computably presentable'' to refer to computable structures, rather than group presentations (see Definitions \ref{def: spectrum and degree} and \ref{def: group presentations}).
	
	\subsection{Notation for formulas}
	
	\begin{defn}
		\label{def: formulas}
		We establish the following conventions for $L$-formulas.
		\begin{itemize}
			\item An \emph{atomic formula} is a quantifier-free formula $\varphi(\tuple{x})$ which does not include conjunctions, disjunctions, or negations.  
			
			\item If $\varphi(\tuple{x}) = \text{``}\bland_{i \in I} \psi_i(\tuple{x}) \land \bland_{j \in J} \neg \psi_j(\tuple{x})\text{''}$, where $I$ and $J$ are finite, and $(\psi_i)_{i \in I}$ and $(\psi_j)_{j \in J}$ are atomic formulas, then we write
			\[ \varphi^+(\tuple{x}) = \{ \psi_i(\tuple{x}) : i \in I \} \]

			and call this the \emph{positive part} of $\varphi(\tuple{x})$ or the \emph{equations} of $\varphi(\tuple{x})$.
			
			Analogously, we write
			\[\varphi^-(\tuple{x}) = \{ \psi_j(\tuple{x}) : j \in J \} \]

			and call this the \emph{negative part} of $\varphi(\tuple{x})$ or the \emph{inequations} of $\varphi(\tuple{x})$.
			\item If $\Phi = \{ \varphi_i : i < n\}$ is a finite set of formulas, we write $\bland \Phi$ for $\bland\limits_{i < n} \varphi_i$.
			\item Let $M$ be an $L$-structure.  For a quantifier-free $\varphi(\tuple{x}, \tuple{m})$ with parameters $\tuple{m} \in M$, say the $L$-structure $N$ \emph{solves} or \emph{realises} $\varphi$ over $M$ if $N \geq M$ and $N \models \exists \tuple{x} \varphi(\tuple{x}, \tuple{m})$.  For $\tuple{n} \in N$ with $N \models \varphi(\tuple{n}, \tuple{m})$, we will say that $\tuple{n}$ \emph{solves} $\varphi(\tuple{x}, \tuple{m})$ in $N$.  \qedhere
		\end{itemize}
	\end{defn}
	
	The following simple proposition will simplify our arguments showing that a group $M$ is existentially closed.  It says that to check existential closure, it suffices to check finite systems of equations and inequations.
	
	\begin{prop}
		\label{no disjunctions for ec}
		For a group $M$, the following are equivalent:
		\begin{enumerate}
			\item $M$ is existentially closed; i.e., for every quantifier-free formula $\varphi(\tuple{x}, \tuple{m})$ with parameters from $M$, if there is a group $N \geq M$
			with $N \models \exists \tuple{x} \varphi(\tuple{x}, \tuple{m})$, then $M \models \exists \tuple{x} \varphi(\tuple{x}, \tuple{m})$.
			\item For every conjunction $\varphi(\tuple{x}, \tuple{m})$ of atomic and negated atomic formulas with parameters from  $M$, if there is a group $N \geq M$ with $N \models \exists \tuple{x} \varphi(\tuple{x}, \tuple{m})$, then $M \models \exists \tuple{x} \varphi(\tuple{x}, \tuple{m})$.
		\end{enumerate}
	\end{prop}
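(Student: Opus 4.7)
The direction $(1) \Rightarrow (2)$ is immediate, since a conjunction of atomic and negated atomic formulas is a special case of a quantifier-free formula. So the content is the converse.

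For $(2) \Rightarrow (1)$, the plan is to put an arbitrary quantifier-free formula into disjunctive normal form and then pick out the relevant disjunct. Given a quantifier-free $\varphi(\tuple{x}, \tuple{m})$ and $N \geq M$ with $N \models \exists \tuple{x}\, \varphi(\tuple{x}, \tuple{m})$, fix $\tuple{n} \in N$ with $N \models \varphi(\tuple{n}, \tuple{m})$. Rewrite $\varphi$ in disjunctive normal form as $\bigvee_{k < K} \psi_k(\tuple{x}, \tuple{m})$, where each $\psi_k$ is a conjunction of atomic and negated atomic formulas. Since $N \models \varphi(\tuple{n}, \tuple{m})$, there is some $k$ with $N \models \psi_k(\tuple{n}, \tuple{m})$, and hence $N \models \exists \tuple{x}\, \psi_k(\tuple{x}, \tuple{m})$. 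Applying hypothesis (2) to $\psi_k$, we obtain $M \models \exists \tuple{x}\, \psi_k(\tuple{x}, \tuple{m})$, which implies $M \models \exists \tuple{x}\, \varphi(\tuple{x}, \tuple{m})$ since $\psi_k \limplies \varphi$ is a tautology.

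There is no real obstacle here; the only thing to be careful about is that DNF is a purely propositional manipulation on the atomic subformulas, so it is valid regardless of the structure $N$ or the parameters $\tuple{m}$. Note also that we never need the hypothesis that $M$ is a group (as opposed to an arbitrary $L$-structure) in this argument; the proposition is really a general fact about existential closure, specialised here for later convenience.
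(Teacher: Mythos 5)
Your argument is correct and matches the paper's proof essentially verbatim: both pass to disjunctive normal form, select a disjunct realised by the witness in $N$, apply (2) to that disjunct, and conclude. No further comment is needed.
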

	
	\begin{proof}[Proof sketch]
		Clearly (1) implies (2).  For the other direction, let $\varphi(\tuple{x}, \tuple{m}) = \bigvee\limits_{i < n} \varphi_i(\tuple{x}, \tuple{m})$ be a quantifier-free formula in disjunctive normal form which is realised in some group $N \geq M$.  Thus, $N$ models at least one of the disjuncts, say $\varphi_i(\tuple{x}, \tuple{m})$.  Applying (2) to this $\varphi_i$, we obtain that $M \models \exists \tuple{x} \varphi_i(\tuple{x}, \tuple{m})$ and hence $M \models \exists \tuple{x} \varphi(\tuple{x}, \tuple{m})$.
	\end{proof}
	
	\subsection{Computable structure theory}
	
	\begin{defn}
		For an $L$-structure $M$, $\Delta^{\mathrm{qf}}(M)$ denotes the \emph{quantifier-free diagram} of $M$; i.e., for a fixed enumeration $\seq{m_i}{i < \omega}$ of $M$, $\Delta^{\mathrm{qf}}(M)$ is the collection of quantifier-free sentences of $L(\seq{m_i}{i < \omega})$ which hold in $M$.  
		
		Similarly $\Delta(M)$ refers to the \emph{full diagram} of $M$. 
	\end{defn}
	
	In this manuscript, we will focus on the computability-theoretic power of $\Delta^{\mathrm{qf}}(M)$, as discussed in the introduction.  The definitions below go through equally well for the full diagram, but will not be as relevant for the paper.
	
	In general, the computability-theoretic power of $\Delta^{\mathrm{qf}}(M)$ may depend on the choice of enumeration of $M$. 
	
	\begin{defn}
		\label{def: spectrum and degree}
		Let $M$ be an $L$-structure.
		\begin{itemize}
			\item We write $\mathop{Spec}(M)$ for $\{ \Tdega : \text{for some enumeration of $M$,} \Delta^{\mathrm{qf}}(M) \leq_{\mathrm{T}} \Tdega \}$.  If $\Tdega \in \mathop{Spec}(M)$, then we will say $M$ is \emph{$\Tdega$-computable} or \emph{$\Tdega$-computably presentable}.
			\item In particular, $M$ is \emph{computably presentable} or \emph{computable} when $0 \in \mathop{Spec}(M)$. 
			\item The \emph{(Turing) degree} of $M$ is the minimal element $\Tdega$ of $\mathop{Spec}(M)$, if it exists.  \qedhere
		\end{itemize}
	\end{defn}
	
	\begin{rem}
		Note that the terminology $\Tdega$-computably presentable conflicts with the standard use of ``$\Tdega$-presentable'' in group theory, which refers to the Turing degree of the set of relators of a presentation of the group.  We call this \emph{having an $\Tdega$-computable relating set} to avoid confusion, as discussed in Definition \ref{def: group presentations}.
	\end{rem}
	
	\subsection{Constructions in group theory}	
	
	\begin{defn}
		\label{defn: word}
		Let $C$ be a possibly infinite set of constants and let $C^{-1}$ be the set obtained by formally inverting each element of $C$.
		\begin{itemize}
			\item A \emph{word} on $C$ is an element of $(C \sqcup C^{-1})^{< \omega}$.  
			\item For a word $w$ in $n$ letters, we will write $w(\tuple{x})$ for the corresponding word in the free variables $\tuple{x}$, $|\tuple{x}| = n$.  Similarly, for constants $\tuple{g}$, $|\tuple{g}| = n$, write $w(\tuple{g})$ for the word where each letter is replaced by the corresponding element of $\tuple{g}$.
			\item A word $w$ is \emph{reduced} if it does not contain a consecutive subword of the form $c \cdot c^{-1}$ or $c^{-1} \cdot c$.  The set of all reduced words on a set $C$ forms a group with multiplication given by ``concatenation with cancellation'', called the \emph{free group on $C$}.\qedhere
		\end{itemize}
	\end{defn}
	
	Recall that a \emph{presentation} $\pres{X}{R}$ for a group $G$ consists of a set of \emph{generators} $X$ and a set $R$ of words on $X$ called \emph{relations} or \emph{relators} such that $G$ is isomorphic to a quotient of the free group on $X$ by the normal subgroup generated by $R$.  In other words, $G$ is isomorphic to the ``freest'' group all of whose elements are words in $X$ and satisfying that every $w \in R$ is the identity in $G$. (See \cite[Chapter 11]{Rotman} for a more precise definition.)

	\begin{defn}
		\label{def: group presentations}
		We establish the following conventions for group presentations.
		\begin{itemize}
			\item $G$ is \emph{finitely generated} if it has a presentation $\pres{X}{R}$ with $X$ finite, and \emph{finitely presented} if it has a presentation $\pres{X}{R}$ with both $X$ and $R$ finite.  To emphasise that a given tuple $\tuple{g}$ generates $G$, we will sometimes write $\pres{\tuple{g}}{R}$, and to emphasise that $R$ is a set of words whose letters are contained in $\tuple{g}$, we may write $R(\tuple{g})$.
			\item If $G \leq H$ and $\tuple{g} \in H$ generates $G$, we will write $G = \langle \tuple{g} \rangle$.
			\item $G$ has an \emph{$\Tdega$-computable set of relators} if there is a presentation $\pres{X}{R}$ with $X$ finite and $R \leq_{\mathrm{T}}\Tdega$.  When $\Tdega = 0$, we say that $G$ has a \emph{computable set of relators}. \qedhere
		\end{itemize}
	\end{defn}
	
	\begin{rem}
		As discussed in Definition \ref{def: spectrum and degree}, we will reserve the phrase ``$G$ is \emph{$\Tdega$-computably presentable}'' for its computability-theoretic meaning, namely that there is an enumeration of the elements of $G$ such that the quantifier-free diagram of $G$ with respect to this enumeration is $\Tdega$-computable.
	\end{rem}
	
	\begin{defn}
		\label{def: subgroups and supergroups}
		We will write $H \leq G$ to denote that $H$ is a \emph{subgroup} of $G$\footnote{Note that this notation is similar to, but distinct from, the notation for computability introduced in an earlier subsection.  Throughout, undecorated $\leq$ will refer to the subgroup relation or the ordering on the natural numbers, while decorated $\leq$ will denote a computability-theoretic reduction.}.  When $H \leq G$, call $G$ a \emph{supergroup} of $H$.
	\end{defn}
	
	We will make use of several constructions from group theory.  Throughout, we use $\tuple{g}, \tuple{h}$ to denote the concatenation of $\tuple{g}$ and $\tuple{h}$.
	
	\begin{defn}
		\label{defn: group constructions}
		Let $G = \pres{\tuple{g}}{R}$ and $H = \pres{\tuple{h}}{S}$ be finitely generated groups.
		\begin{itemize}
			\item The \emph{free product} of $G$ and $H$ is given by
			\[ G * H = \pres{\tuple{g}, \tuple{h}}{R \cup S}. \]
			\item Suppose $G$ and $H$ both embed $F = \pres{\tuple{f}}{V}$, with $\alpha: F \into G$ and $\beta: F \into H$.  Then the \emph{free product with amalgamation of $G$ and $H$ over $F$} is given by
			\[ G*_F H = \pres{\tuple{g}, \tuple{h}}{R \cup S \cup \{\alpha(f)\beta^{-1}(f) \vcentcolon f \in \tuple{f} \} }. \]
			\item If $G$ contains two copies of $F$ with an isomorphism, say $\alpha: F \cong F'$ between them, then the \emph{HNN extension of $G$ over $\alpha$} is given by
			\[ G*_{\alpha} = \pres{\tuple{g}, t}{R \cup \{tft^{-1}\alpha(f)^{-1} \vcentcolon f \in \tuple{f}\} } \]
			where $t$ is a letter not appearing in $\tuple{g}$. \qedhere
		\end{itemize}
	\end{defn}
	
	\subsection{Word problems}
	
	\begin{defn}
		For a finitely generated group $G = \pres{X}{R}$, let the \emph{word problem} of $G$ be
		\[ W(G, X) = \{w \vcentcolon w\ \text{a word on $X$ and $w = 1$ in $G$} \}. \hfill \qedhere \]
	\end{defn}
	
	\begin{rem}
		Note that $\pres{X}{W(G, X)}$ is always a presentation of $G$.
	\end{rem}
	
	We can somewhat explicitly describe the word problem of a group from its presentation.
	
	\begin{prop}
		\label{enumeration of G from presentation}
		Let $G = \pres{X}{R}$ be a group.  Then $W(G, X)$ consists of all conjugates of cyclic permutations of $R$ by elements of $G$.
	\end{prop}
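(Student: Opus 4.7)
The plan is to recognise the statement as repackaging the standard description of the normal closure of $R$ in the free group on $X$. Write $F = F(X)$ for the free group on $X$ and $N$ for the normal closure of $R$ in $F$. By the universal property of the presentation $\pres{X}{R}$, the quotient map $F \onto F/N$ is (up to renaming) the canonical map from words on $X$ to $G$, so a word $w$ on $X$ satisfies $w = 1$ in $G$ exactly when $w \in N$. Thus the task reduces to identifying $N$ with the set of conjugates of cyclic permutations of $R$ by elements of $G$, understood as the subgroup of $F$ generated by that set (which is what the proposition is tacitly asserting, since $W(G,X)$ is closed under multiplication and inversion).

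For the $\supseteq$ inclusion, I first note that a cyclic permutation of a word $r = a_1 \cdots a_n$ in $F$ is already a conjugate of $r$ in $F$: explicitly, $a_{k+1} \cdots a_n a_1 \cdots a_k = (a_1 \cdots a_k)^{-1}\, r\, (a_1 \cdots a_k)$. So a conjugate of a cyclic permutation of $r$ is itself a conjugate of $r$ in $F$. Since $r \in N$ and $N$ is normal in $F$, each such conjugate lies in $N$, hence represents $1$ in $G$; and because $W(G,X)$ is a subgroup of $F$ (it is the kernel of $F \to G$), any product of such conjugates is in $W(G,X)$ too.

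For the $\subseteq$ inclusion, I would quote (or briefly prove) the standard fact that the normal closure of a subset $R$ of a free group $F$ equals the subgroup generated by $\{ g r^{\pm 1} g^{-1} \from g \in F,\ r \in R\}$, proved by checking that this subgroup is already normal and contains $R$. Given $w \in W(G,X)$, this writes $w$ as a product of terms of the form $g r^{\pm 1} g^{-1}$. Each such term is a conjugate of a cyclic permutation of a relator (treating $r$ as a cyclic permutation of itself; inverses $r^{-1}$ are handled either by observing $(grg^{-1})^{-1} = gr^{-1}g^{-1}$, or by regarding $R$ as closed under inversion, which changes neither the presented group nor the collection of conjugates of cyclic permutations). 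Finally, conjugating in $F$ by a word $g$ and ``conjugating by the element $[g] \in G$'' are indistinguishable at the level of deciding membership in $W(G,X)$, which licenses the phrase ``by elements of $G$'' in the statement.

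There is no genuine obstacle here; the only thing to be careful about is interpreting ``consists of'' so that the implicit closure under group operations matches the generation of $N$ by conjugates, and keeping the bookkeeping between $F(X)$ and $G$ straight so that ``conjugation by an element of $G$'' is made rigorous via lifting to words on $X$.
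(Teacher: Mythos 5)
Your argument is correct: the paper itself gives no proof here (it simply cites the statement as ``a standard fact from group theory''), and what you have written is precisely that standard fact --- identifying $W(G,X)$ with the normal closure of $R$ in the free group $F(X)$ and describing that normal closure as the subgroup generated by conjugates of (cyclic permutations of) the relators and their inverses. Your explicit flagging that ``consists of'' must be read as ``is generated by'' (a single product of two conjugates of a relator need not itself be a conjugate of a relator) is the right charitable reading, and it is the reading the paper actually uses, since its only application is to extract a c.e.\ enumeration of $W(G,X)$ from a c.e.\ set of relators.
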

	
	This is a standard fact from group theory, which implies that, for example, any finitely generated group with a c.e.\ set of relators has c.e.\ word problem.
	
	The following easy proposition justifies the use of the phrase \emph{the} degree of the word problem for $G$.
	
	\begin{prop}
		Let $G$ be finitely generated.  Then the degree of $W(G)$ is independent of the choice of finite generating set for $G$, and, moreover, is equal to the minimal degree of $\mathop{Spec}(G)$ (which exists).
	\end{prop}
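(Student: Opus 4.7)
The plan is to handle the two claims separately: first show that the Turing degree of $W(G, X)$ does not depend on the finite generating set $X$, and then show that this common degree, call it $d$, is both in $\mathop{Spec}(G)$ and below every element of it.

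For generator-independence, I would fix two finite generating sets $X = \{x_1, \ldots, x_n\}$ and $Y = \{y_1, \ldots, y_m\}$. Each $y_j$ can be written as a (once-and-for-all chosen) word $u_j(\tuple{x})$ in $X$. Given any word $w$ on $Y$, the substitution $y_j \mapsto u_j(\tuple{x})$ produces a word $w'$ on $X$ that is equal to $w$ in $G$, and this substitution is computable uniformly in $w$. Hence $w \in W(G, Y) \iff w' \in W(G, X)$, giving a computable (in fact many-one) reduction $W(G, Y) \leq W(G, X)$; the symmetric reduction works the same way.

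For the upper bound, I would fix a finite generating tuple $\tuple{g}$ and computably list all words $w_0, w_1, \ldots$ over $\tuple{g}$. Using $W(G)$ as an oracle, I can thin this list to a transversal by keeping $w_i$ exactly when $w_i w_j^{-1} \notin W(G)$ for each previously kept $w_j$; this yields an enumeration $(m_i)_{i<\omega}$ of the elements of $G$. In this enumeration, every atomic formula reduces to a word-problem query: checking $m_a \cdot m_b = m_c$ is the same as checking $w_a w_b w_c^{-1} \in W(G)$, and similarly for inverses. Hence $\Delta^{\mathrm{qf}}(G) \leq_{\mathrm{T}} W(G)$, so $d \in \mathop{Spec}(G)$.

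For the lower bound, take any $\Tdega \in \mathop{Spec}(G)$ witnessed by an enumeration $(m_i)_{i<\omega}$ of $G$ with $\Delta^{\mathrm{qf}}(G) \leq_{\mathrm{T}} \Tdega$. Since $G$ is finitely generated, the tuple $\tuple{g}$ appears as some fixed finite tuple $(m_{i_1}, \ldots, m_{i_n})$ in the enumeration, and $1_G$ appears as some fixed $m_{k_0}$. Treating these indices as hard-coded parameters, I can decide, given a word $w(\tuple{g})$, whether $w(\tuple{g}) = 1$ in $G$: evaluate $w$ step by step, at each multiplication using the $\Tdega$-computable diagram to find the unique $m_c$ equal to the current product, and finally check whether the resulting index equals $k_0$. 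Therefore $W(G) \leq_{\mathrm{T}} \Tdega$, completing the proof that $d = \min \mathop{Spec}(G)$. The only delicate point is really a non-point: the indices $i_1, \ldots, i_n, k_0$ depend on the chosen enumeration and are not uniform in $\Tdega$, but for each fixed $\Tdega$-computable copy of $G$ they are fixed natural numbers, which is all that the non-uniform reduction $W(G) \leq_{\mathrm{T}} \Tdega$ requires.
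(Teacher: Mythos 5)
Your proof is correct, and it is the standard argument that the paper leaves implicit (the proposition is stated without proof as an "easy" fact): substitution of generator words gives the many-one equivalence of word problems over different finite generating sets, a $W(G)$-computable transversal of words puts $\deg(W(G))$ into $\mathop{Spec}(G)$, and the non-uniform reduction from any $\Tdega$-computable copy back to $W(G)$ via hard-coded indices for the generators and the identity gives minimality. The only cosmetic quibble is the phrase "the unique $m_c$" in the lower bound, which should be "some $m_c$" if the enumeration is not assumed injective; the argument is unaffected since any witnessing index serves.
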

	
	Hence, from now on, we write $W(G)$ for $W(G, X)$.
	
	Thus the degree of a group generalises the degree of the word problem to possibly infinitely-generated groups.  Furthermore, every Turing degree is the degree of the word problem of a finitely generated group.
	
	\begin{prop}
		\label{every turing degree contains a group}
		For every $X$, there is a finitely generated group $G$ such that $W(G) \equiv_{\mathrm{T}} X$.
	\end{prop}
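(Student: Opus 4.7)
Plan: Given $X \subseteq \omega$, the plan is to construct a finitely generated group $G_X$ with $W(G_X) \equiv_{\mathrm{T}} X$. The quickest route is to appeal to the relativisation of the classical Boone-Higman-Clapham theorem, which asserts that every $X$-c.e.\ Turing degree is the degree of the word problem of some finitely generated group with an $X$-computable set of relators (in the sense of Definition \ref{def: group presentations}). Since $X$ is trivially $X$-c.e., applying this theorem with Turing degree $\deg(X)$ produces the required group. The Boone-Higman-Clapham proof encodes Turing machine computations into iterated HNN extensions, and the relativisation is obtained by replacing ordinary Turing machines with $X$-oracle machines throughout.

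For a more hands-on plan, one attempts a direct construction: fix the free group $F = F(a, b)$ and a computable sequence $(u_n)_{n < \omega}$ of words in $F$ chosen ``conjugation-independently'', meaning that for every $S \subseteq \omega$, the $F$-normal closure of $\{u_n : n \in S\}$ does not contain $u_m$ for any $m \notin S$. Then $G_X := F / \langle\!\langle u_n : n \in X \rangle\!\rangle$ is finitely generated with an $X$-computable set of relators. The reduction $X \leq_{\mathrm{T}} W(G_X)$ follows by checking membership of $u_n$ in $W(G_X)$, which by conjugation-independence correctly decides $X$. For $W(G_X) \leq_{\mathrm{T}} X$, one realises $G_X$ as an iterated HNN extension over $F$ in which each $u_n$ with $n \in X$ is killed at a distinct stage; Britton's lemma then provides an $X$-computable algorithm for deciding whether a word is trivial.

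The main obstacle is arranging conjugation-independence. Naive choices such as $u_n = b^n a b^{-n}$ fail catastrophically: since $b^k u_n b^{-k} = u_{n+k}$, killing any single $u_n$ kills every $u_m$ simultaneously, collapsing $G_X$ to $\mathbb{Z}$. Choosing a sequence whose $F$-conjugacy orbits are pairwise disjoint and verifying the ensuing $X$-computable normal form is a routine but technically substantial exercise in combinatorial group theory, which the Boone-Higman-Clapham machinery handles wholesale --- hence the preference for the first route.
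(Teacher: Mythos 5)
There is a genuine gap: both of your routes stop short of the actual content. The paper treats this as a standard fact (it cites it and immediately strengthens it via Ziegler), but the construction it has in mind is the one it sketches in the proof of Lemma \ref{existence of fp group computing hp}: take $G_X = \pres{a,b}{\{[a,b^nab^{-n}] : n \in X\}}$. The ``conjugation-independence'' you correctly identify as the crux is supplied by the commutators, not by the conjugates $b^nab^{-n}$ themselves: since the elements $x_n = b^nab^{-n}$ freely generate a copy of $F_\omega$ inside $F(a,b)$, the normal closure of $\{[x_0,x_n] : n \in X\}$ meets the family $\{[x_0,x_m]\}$ exactly in the indices with $m \in X$ (the normal subgroup $\langle\langle a \rangle\rangle$ of $G_X$ is the partially commutative group on the $x_i$ with commuting pairs $\{i, i+n\}$, $n \in X$, extended by the shift $b$), and the normal form theorem for such groups, or Britton's lemma in the equivalent amalgamated-product formulation, gives $W(G_X) \leq_{\mathrm{T}} X$. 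Your proposal names the problem but does not exhibit any working family $(u_n)$, and the sufficient condition you fall back on --- pairwise disjoint conjugacy orbits --- is false: in $F(a,b)$ the words $a$ and $a\cdot bab^{-1}$ are non-conjugate (they differ in the abelianisation), yet the second lies in the normal closure of the first. So the step you defer as ``routine but technically substantial'' is precisely where the proof lives, and the heuristic you offer in its place would not complete it.

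Your first route does not rescue this. The theorem you invoke is misattributed (Boone--Higman is the simple-group characterisation of solvable word problems; Clapham's theorem concerns degree-preserving Higman embeddings of \emph{finitely presented} groups), and the relativised statement you actually need --- every $X$-c.e.\ degree is realised by the word problem of a finitely generated group with $X$-computable relators --- is strictly stronger than the proposition being proved and is essentially Theorem \ref{every star degree contains a group}. Asserting that it follows by ``replacing Turing machines with oracle machines throughout'' is an appeal to a black box, not a proof, and it imports Higman-embedding machinery where none is needed: for finitely \emph{generated} (rather than finitely presented) groups the elementary construction above suffices.
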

	
	We will need a somewhat stronger version of this result, which can be found in \cite{Ziegler2}, and which we will see as Theorem \ref{every star degree contains a group}.

	\subsection{Higman's embedding theorem}
	
	Higman's remarkable Embedding Theorem (\cite{Higman}) provided one of the first real bridges between group theory and computability theory.  It will play a large role in many of the arguments in this paper.
	
	\begin{theorem}[\cite{Higman}]
		\label{higman's embedding theorem}
		Let $G$ be a finitely generated group.  Then $G$ has a computable set of relators iff $G$ embeds in a finitely presented group.
	\end{theorem}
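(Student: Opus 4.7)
\emph{Easy direction} ($\Leftarrow$). Suppose $G \leq H$ with $H = \pres{\tuple{h}}{S}$ finitely presented, and let $\tuple{g}$ be a finite generating set for $G$. Writing each $g_i$ as a word $u_i(\tuple{h})$ gives a computable substitution $w(\tuple{x}) \mapsto w(u_1(\tuple{h}), \ldots, u_k(\tuple{h}))$, and a word $w$ is a relator of $G$ on $\tuple{g}$ iff its image lies in $W(H)$. Proposition \ref{enumeration of G from presentation} makes $W(H)$ c.e., so $W(G)$ is c.e. To promote a c.e.\ set of relators to a computable one I would apply the standard padding trick: given a computable enumeration $(r_n)_{n<\omega}$ of $W(G)$, adjoin a fresh generator $t$ with relator $t$ and use $R = \{t\} \cup \{r_n t^{n+1} \vcentcolon n < \omega\}$. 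In any group satisfying $R$ we have $t = 1$, so the normal closure of $R$ matches that of $W(G) \cup \{t\}$ and the new presentation still defines $G$; membership of a word $w$ in $R$ is decided by stripping the unique trailing $t^k$-suffix $w = w' t^k$ (with $w'$ not ending in $t^{\pm 1}$) and checking $w' = r_{k-1}$ against the enumeration.

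\emph{Hard direction} ($\Rightarrow$). This is genuinely deep and I would follow Higman's original approach via \emph{benign} subgroups. Call $H \leq G$ benign (with $G$ finitely generated) if some finitely presented overgroup $K \geq G$ contains a finitely generated subgroup $L$ with $G \cap L = H$. The plan has two main stages. First, one proves a toolkit of closure properties for benignness under the constructions of Definition \ref{defn: group constructions}: benign subgroups are closed under finite intersection and join, under normal closure, and under preimages along the natural maps arising from HNN extensions and amalgamated products. Each such property is witnessed by an explicit patching of the ambient finitely presented groups via the constructions available to us. Second, and this is the crux, one shows that the normal closure of any c.e.\ subset of a finite-rank free group $F$ is benign in $F$; this then yields the embedding $G = F/\langle\langle R \rangle\rangle \hookrightarrow $ some finitely presented group after further HNN extensions engineered to kill the witnessing subgroup $L$ without collapsing extra elements of $F$.

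\emph{Main obstacle.} By far the hardest part is establishing benignness of $\langle\langle R \rangle\rangle$ for arbitrary c.e.\ $R$. This requires an explicit encoding of the computation of a Turing machine enumerating $R$ into the generators and relators of a finitely presented group, using the closure-properties toolkit as scaffolding so that the states of the computation can be ``projected out'' and only the relators remain. I would not attempt a conceptual shortcut -- Higman's coding has resisted real simplification -- and would first establish each closure property as a self-contained lemma before devoting the bulk of the effort to the Turing-machine coding itself. The final assembly of the f.p.\ overgroup of $G$ from the benignness witness is then comparatively routine, requiring only one or two further HNN extensions to convert the ``$G \cap L = H$'' data into an actual embedding of $G$.
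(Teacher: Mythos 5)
The paper does not prove this theorem at all: it is imported as a black box from \cite{Higman}, so there is no in-paper argument to compare against. Judged on its own terms, your easy direction is complete and correct, and it is exactly the route the surrounding text supports: a finitely generated subgroup of a finitely presented group has c.e.\ word problem via the substitution $w(\tuple{x}) \mapsto w(u_1(\tuple{h}),\ldots,u_k(\tuple{h}))$, and your padding with a killed generator $t$ is a harmless variant of the paper's own Craig's Trick (Remark \ref{Craig}), which pads with powers of the identity symbol instead; both correctly decide membership by reading off the suffix length and running the enumeration that far.

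The hard direction, however, is not proved in your proposal --- it is a (accurate) table of contents for Higman's proof. The two load-bearing claims, namely the closure properties of benign subgroups and, above all, the benignness in a free group $F$ of the normal closure $\langle\langle R \rangle\rangle$ of an arbitrary c.e.\ set $R$, are named but not established, and you say explicitly that you would defer to Higman's Turing-machine coding rather than supply it. That coding is the entire mathematical content of the theorem in this direction; without it the argument does not get off the ground, since the reduction of ``$G$ embeds in a finitely presented group'' to ``$\langle\langle R \rangle\rangle$ is benign in $F$'' is, as you note, the routine part. So as a self-contained proof the proposal has a genuine gap, though given that the paper itself only cites the result, your outline is the correct one to follow and the gap is one of execution rather than of strategy. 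One further small caution: your final assembly should make explicit that the HNN extensions used to pass from the benignness witness $(K, L)$ to an honest embedding of $G = F/\langle\langle R \rangle\rangle$ do not identify elements of $F$ lying outside $\langle\langle R \rangle\rangle$; this is where Britton's lemma (or the normal form theorem for HNN extensions) must be invoked, and it is the step most often elided in sketches at this level of detail.
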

	
	\begin{no-rem}
		\label{Craig}
		Higman's theorem extends to finitely generated groups with computably enumerable presentations: suppose $G$ is such a group with c.e.\ list of relations $\seq{R_i(\tuple{g})}{i < \omega}$.  Then $G$ is computably presented via $\seq{R_i(\tuple{g})e^i}{i < \omega}$.  
		
		Indeed, to check if some word is a relator in the new relating set, first check that it is of the form $w(\tuple{g})e^i$, where $w$ has no terminal $e$s.  Then, to check if this is a relator, go through the computable enumeration of the old relating set up to position $i$.  This will be enough to determine if $w(\tuple{g})e^i$ corresponds to a relator in the old relating set.
		
		This is called Craig's Trick.
	\end{no-rem}

	\section{Building existentially closed groups via the $\Fraisse$ construction}
	In this section we show that every existentially closed group is a $\Fraisse$ limit and characterise the $\Fraisse$ classes that yield existentially closed groups.  All of the results of this section, unless otherwise stated, are from \cite[Chapter I]{Ziegler}.  This characterisation of \Fraisse\ classes giving existentially closed groups, together with a relativisation of \Fraisse's Theorem, will be key ingredients in many arguments in this paper, so we include details for completeness.
	
	\begin{defn}
		$M$ is \emph{$\omega$-homogeneous} if whenever $A$ and $B$ are finitely generated substructures of $M$ and $\alpha$ is an isomorphism between $A$ and $B$, there is an automorphism $\hat{\alpha}$ of $M$ extending $f$.
	\end{defn}
	
	Note that some authors refer to this as \emph{strongly $\omega$-homogeneous}, but we will not need this distinction.
	
	\begin{prop}
		Existentially closed groups are $\omega$-homogeneous.
	\end{prop}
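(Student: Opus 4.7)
The plan is to use a back-and-forth argument enabled by amalgamation and existential closure. Fix the given isomorphism $\alpha \colon A \to B$ between finitely generated subgroups of $M$, and enumerate $M = (m_i)_{i<\omega}$. I would build a chain of partial isomorphisms $\alpha = f_0 \subseteq f_1 \subseteq \cdots$, where each $f_n \colon C_n \to D_n$ is an isomorphism between finitely generated subgroups of $M$, alternating the ``forth'' and ``back'' steps so that every $m_i$ eventually lies in both $\mathrm{dom}(f_n)$ and $\mathrm{range}(f_n)$. The union $\hat\alpha := \bigcup_n f_n$ will then be the desired automorphism of $M$ extending $\alpha$.

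The heart of the construction is the one-point extension step. Suppose $f_n \colon C_n \to D_n$ is given with $C_n = \langle \tuple{c}\rangle$, $D_n = \langle \tuple{d}\rangle = \langle f_n(\tuple{c})\rangle$, and $m \in M$ is to be added to the domain. I need to produce $m' \in M$ such that $f_n \cup \{m \mapsto m'\}$ extends to an isomorphism $\langle \tuple{c}, m\rangle \to \langle \tuple{d}, m'\rangle$ between subgroups of $M$. To pin down what quantifier-free type $m'$ must realize, form $G := \langle \tuple{c}, m\rangle \leq M$, let $G'$ be a relabeled copy in which $\tuple{c}$ has been replaced by $\tuple{d}$ (so that $D_n \hookrightarrow G'$ via $f_n$), and construct the amalgamated free product $N := M \ast_{D_n} G'$. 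Then $N \supseteq M$, and the image $m^\ast \in G' \leq N$ of $m$ satisfies $w(\tuple{d}, m^\ast) = 1$ in $N$ iff $w(\tuple{c}, m) = 1$ in $M$ for every word $w$; equivalently, $m^\ast$ realizes in $N$ exactly the transferred quantifier-free type that the candidate $m'$ must realize in $M$.

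The main obstacle is that existential closure of $M$, applied via Proposition \ref{no disjunctions for ec}, only furnishes a realizer in $M$ for each finite conjunction of equations and inequations in this type, not a single realizer for the whole (infinite) type. The plan for closing this gap is to establish, as a separate lemma, that a countable existentially closed group realizes every quantifier-free type over a finitely generated subgroup that is realized in some extension. The proof of that lemma would itself be a subsidiary back-and-forth: enumerate the type $\varphi_0, \varphi_1, \ldots$ and the elements of $M$ in tandem, and at each stage use existential closure to produce an element of $M$ witnessing the next finite fragment of the type while being incompatible with the candidates previously ruled out; countability of $M$ is what allows the construction to converge on a single target element. Once this realization principle is in place, applying it to the type of $m^\ast$ over $\tuple{d}$ yields the required $m' \in M$, the symmetric ``back'' step is handled identically with $C_n$ and $D_n$ swapped, and the back-and-forth then produces $\hat\alpha$.
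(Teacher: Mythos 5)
Your back-and-forth framework and the reduction of the one-point extension step to realizing a quantifier-free type (via the amalgamated free product $M *_{D_n} G'$) are set up correctly, but the lemma you lean on to close the gap is false, and this is not a repairable detail --- it is precisely the phenomenon that Macintyre's omitting-types theorem exploits. A countable existentially closed group need \emph{not} realize every quantifier-free type over a finitely generated subgroup that is realized in an extension: take $M_0$ from Theorem \ref{def of M_0}, whose finitely generated subgroups are exactly those with c.e.\ word problem, let $A$ be the trivial subgroup, and let $p(\tuple{x})$ be the quantifier-free type of the generators of a finitely generated group $H$ with non-c.e.\ word problem. Then $p$ is realized in $M_0 * H \geq M_0$ but omitted in $M_0$ (cf.\ Theorem \ref{macintyre-neumann-rips}). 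Your sketched proof of the lemma also does not work as an argument: existential closure hands you, for each finite fragment of the type, \emph{some} realizer, but different fragments may require different elements, and there is no mechanism by which ``ruling out candidates'' forces a single element of a countable group to satisfy all infinitely many conditions at once; countability gives you an enumeration to diagonalize along, not any form of saturation. Finally, even the weakened version you actually need --- realizing the type of $m^*$ over $\langle\tuple{d}\rangle$ when the abstract group $\langle\tuple{d}, m^*\rangle$ is known to be isomorphic to a subgroup of $M$ --- is exactly the statement that the partial isomorphism $\tuple{c}\mapsto\tuple{d}$ extends one more step, i.e.\ the homogeneity claim itself one level down, so the argument is circular without an independent input.

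That independent input is the HNN extension, which is how the paper proves the proposition and which collapses the whole infinite-type problem to a single finite system. Given $\alpha\colon A \to B$ with $A = \langle a_0,\dots,a_{n-1}\rangle$, form the HNN extension $M*_\alpha \geq M$ (Definition \ref{defn: group constructions}), in which some $t$ satisfies $t a_i t^{-1} = \alpha(a_i)$ for all $i < n$. This is a \emph{finite} conjunction of equations with parameters from $M$, solvable in a supergroup, hence by existential closure solvable by some $t' \in M$. Conjugation by $t'$ is then an inner automorphism of $M$ agreeing with $\alpha$ on the generators of $A$, hence extending $\alpha$. No back-and-forth and no infinite types are needed; the finite generation of $A$ is what makes the system finite.
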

	
	The proposition follows by applying \textit{HNN extensions}, which show that any isomorphism between finitely generated subgroups can be realised by conjugation in a supergroup.
	
	\begin{defn}
		The \emph{skeleton} of a structure $M$, denoted $\Sk(M)$, is the collection of all finitely generated substructures of $M$.  (Note that many authors call this the \emph{age} of $M$.)
	\end{defn}
	
	A standard back-and-forth argument shows that countable $\omega$-homogeneous structures are isomorphic.  Thus we obtain the following corollary.
	
	\begin{cor}
		Countable existentially closed groups are determined up to isomorphism by their skeletons.
	\end{cor}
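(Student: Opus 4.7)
The plan is to prove the stronger statement: any two countable $\omega$-homogeneous groups $M$ and $N$ with the same skeleton (i.e., $\Sk(M)$ and $\Sk(N)$ contain the same finitely generated groups up to isomorphism) are isomorphic. Since existentially closed groups were just shown to be $\omega$-homogeneous, the corollary will then follow immediately by specialising to this case.

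The isomorphism is built by a back-and-forth argument. Fix enumerations $M = \{m_i : i < \omega\}$ and $N = \{n_i : i < \omega\}$, and inductively construct a chain of partial isomorphisms $\phi_s : A_s \to B_s$ between finitely generated subgroups $A_s \leq M$ and $B_s \leq N$, arranging that $m_s \in A_{2s+1}$ and $n_s \in B_{2s+2}$. At the forth step, given $\phi_s$, let $A' = \langle A_s, m_s \rangle \in \Sk(M)$. Since $\Sk(M) = \Sk(N)$, there is an isomorphism $\alpha : A' \to B'$ with $B' \leq N$ finitely generated. The restriction $\alpha \restricted A_s$ is an isomorphism from $A_s$ to $\alpha(A_s) \leq N$, so the composition $\beta = (\alpha \restricted A_s) \circ \phi_s^{-1} : B_s \to \alpha(A_s)$ is an isomorphism between two finitely generated subgroups of $N$. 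By $\omega$-homogeneity of $N$, $\beta$ extends to an automorphism $\hat{\beta}$ of $N$. Then $\phi_{s+1} := \hat{\beta}^{-1} \circ \alpha : A' \to N$ is an embedding whose restriction to $A_s$ agrees with $\phi_s$ and whose domain contains $m_s$. The back step is symmetric, using $\omega$-homogeneity of $M$ to extend the inverse so that $n_s$ lies in the range.

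Taking $\phi = \bigcup_s \phi_s$ yields a map from $M$ to $N$; the enumeration conditions ensure that $\phi$ is defined on all of $M$ and surjects onto $N$, while being an isomorphism at each finite stage means $\phi$ is an isomorphism of groups. Applying this to countable existentially closed groups, which are $\omega$-homogeneous by the preceding proposition, gives the corollary.

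The main conceptual point, and the only place where something nontrivial happens, is the two applications of $\omega$-homogeneity: abstractly, equality of skeletons only produces \emph{some} isomorphic copy $B'$ of $A'$ inside $N$, not one that happens to contain $B_s$ as the image of $A_s$ under $\phi_s$. Homogeneity is exactly what realigns this arbitrary copy with the already-constructed partial isomorphism. No subtlety arises with the group operations themselves, and since partial isomorphisms between finitely generated subgroups are determined by finite data, no effectivity issues appear at this stage.
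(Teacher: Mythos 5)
Your proposal is correct and is exactly the argument the paper has in mind: the paper simply cites ``a standard back-and-forth argument shows that countable $\omega$-homogeneous structures [with the same skeleton] are isomorphic'' and combines it with the preceding proposition that existentially closed groups are $\omega$-homogeneous. You have merely written out that standard back-and-forth in full, correctly identifying homogeneity as the tool that realigns an arbitrary isomorphic copy with the partial isomorphism already constructed.
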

	
	We will provide a characterisation, due to Ziegler, of the classes of finitely generated groups that form the skeleton of an existentially closed group in Theorem \ref{characterisation of skeleton of ec group}.  This will ultimately give us an effective way of building existentially closed groups from an appropriate class of finitely generated subgroups.  This tool will be an important component of many of our later results.
	
	Before we state the characterisation, we need to define some properties of skeletons.
	
	\begin{defn}
		\label{properties of skeletons}
		Let $T$ be a theory and let $\cK$ be a collection of finitely generated models of $T$.  Then $\cK$ has the:
		\begin{itemize}
			\item \emph{Hereditary Property} (HP) if for every $A \in \cK$, and every finitely generated substructure $B$ of $A$, there is some $\hat{B} \in \cK$ such that $\hat{B} \cong B$.
			\item \emph{Joint Embedding Property} (JEP) if for every $A$, $B \in \cK$, there is a $C \in \cK$ with embeddings $\alpha: A \into C$, $\beta: B \into C$.
			\item \emph{Amalgamation Property} (AP) if for every $A$, $B \in \cK$ and every finitely generated $C$ such that there are embeddings $\alpha: C \into A$ and $\beta: C \into B$, there is a $D \in \cK$ along with embeddings $\hat{\alpha}: A \into D$ and $\hat{\beta}: B \into D$ such that $\hat{\alpha}\alpha \restricted C = \hat{\beta}\beta \restricted C$.
		\end{itemize}
		In addition, in this paper we say $\cK$ is \emph{existentially closed} if for every quantifier-free formula $\varphi(\tuple{x}, \tuple{a})$ with parameters from some $A \in \cK$ and such that $\varphi$ is solved in some superstructure $B$ of $A$ with $B \models T$, then there is a $\hat{B} \in \cK$ with $\hat{B}$ a superstructure of $A$ and $\tuple{b} \in \hat{B}$ with $\hat{B} \models \varphi(\tuple{b}, \tuple{a})$.
	\end{defn} 
	
	\begin{theorem}[\cite{Ziegler}]
		\label{characterisation of skeleton of ec group}
		A collection of finitely generated groups is the skeleton of an existentially closed group iff it is existentially closed and satisfies HP, JEP, and AP.
	\end{theorem}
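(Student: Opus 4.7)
The plan is to prove both directions separately. For the forward direction, assume $\cK = \Sk(M)$ for an existentially closed $M$. The properties HP, JEP, and existential closedness are essentially immediate from unpacking the definitions: HP holds because a finitely generated subgroup of a finitely generated subgroup of $M$ is itself finitely generated and sits inside $M$; JEP holds because for $A, B \in \Sk(M)$, the subgroup $\langle A \cup B \rangle \leq M$ is finitely generated; and existential closedness of $\cK$ follows by embedding $A \in \cK$ into $M$, invoking existential closedness of $M$ itself to find a solution $\tuple{b} \in M$ to $\varphi(\tuple{x}, \tuple{a})$, and then taking $\hat{B} := \langle A \cup \tuple{b} \rangle \in \Sk(M) = \cK$.

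The interesting case in the forward direction is AP. Given $A, B \in \cK$ sharing a common finitely generated subgroup $C$ (via embeddings $\alpha : C \into A$, $\beta : C \into B$), embed both $A$ and $B$ into $M$ using $\cK = \Sk(M)$. The two compositions give two finitely generated copies $C_A, C_B$ of $C$ inside $M$, canonically isomorphic. By $\omega$-homogeneity of $M$ (the earlier proposition for existentially closed groups), this isomorphism extends to an automorphism $\sigma$ of $M$; pre-composing one of the embeddings with $\sigma^{\pm 1}$ produces embeddings of $A$ and $B$ into $M$ whose restrictions to $C$ agree, so $D := \langle A \cup B \rangle \leq M$ lies in $\Sk(M) = \cK$ and witnesses AP.

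For the backward direction, I would build $M$ as a union of a chain $M_0 \leq M_1 \leq \cdots$ in $\cK$ by a Fra\"iss\'e-style enumeration of tasks. Since $M$ is to be countable and $\cK \subseteq \Sk(M)$ will have to be countable up to isomorphism, I enumerate two kinds of requirements interleaved: (i) for each $A \in \cK$ (up to isomorphism), ensure that $A$ embeds into some $M_i$; and (ii) for each pair $(\varphi, \tuple{a})$ where $\tuple{a}$ eventually appears in some $M_i$ and $\varphi$ is a quantifier-free formula, solve $\varphi(\tuple{x}, \tuple{a})$ in some later $M_j$ \emph{provided} it is solvable in some supergroup of $M_i$. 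Type (i) tasks are handled by JEP, which produces $M_{i+1} \in \cK$ containing isomorphic copies of $M_i$ and $A$. Type (ii) tasks are handled by the existential closedness of $\cK$, which, when $\varphi$ is solvable in any supergroup of $M_i$, yields an extension $M_{i+1} \in \cK$ solving it. Setting $M := \bigcup_i M_i$, HP forces $\Sk(M) \subseteq \cK$ (any finitely generated subgroup of $M$ lies in some $M_i \in \cK$), task (i) gives the reverse inclusion, and task (ii) gives existential closedness of $M$ (any parameter tuple is already in some $M_i$).

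The main obstacle is the AP step of the forward direction, where $\omega$-homogeneity of $M$ must be used to bridge the gap between ``$A$ and $B$ each embed into $M$'' and ``$A$ and $B$ embed into $M$ compatibly on $C$''; without this alignment the two embeddings obtained from $\cK = \Sk(M)$ need not cohere on $C$. The backward direction is a routine Fra\"iss\'e-style bookkeeping once one verifies that JEP and existential closedness of $\cK$ together let every requirement be met while remaining in $\cK$; notably, AP is not used in the construction of $M$, but is recovered as a necessary condition from the forward direction.
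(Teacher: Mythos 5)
Your proof is correct, but it is organised differently from the paper's in both directions, and the differences are worth noting. For the forward direction the paper simply declares everything ``clear''; you supply the one genuinely non-trivial verification, AP, via $\omega$-homogeneity of $M$, which is exactly the right tool (the paper establishes that existentially closed groups are $\omega$-homogeneous via HNN extensions). One small point you leave implicit in the existential-closedness check: to invoke existential closure of $M$ for a formula solved only in a supergroup $B$ of $A$, you must first pass to a supergroup of $M$ itself, e.g.\ the amalgamated free product $M *_A B$. For the backward direction the paper invokes \Fraisse's Theorem as a black box to obtain an $\omega$-homogeneous $M$ with $\Sk(M)=\cK$ and then argues that $M$ is existentially closed; you instead rebuild the chain by hand, interleaving JEP tasks with existential-closure tasks and never using AP. Your route is longer but arguably more careful: the paper's step ``$H\in\cK=\Sk(M)$, so $\varphi$ is solved in $M$'' silently requires $\omega$-homogeneity to move the abstract copy of $H$ in $M$ onto the given parameters $\tuple{g}$, whereas your construction solves each formula over the actual parameters as they arise in the chain. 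Two minor bookkeeping corrections: the side condition on a type (ii) task should be solvability of $\varphi(\tuple{x},\tuple{a})$ over the \emph{current} top $M_j$ of the chain (solvability over the $M_i$ where $\tuple{a}$ first appeared is neither sufficient to act nor needed for the verification, since solvability over $M$ implies solvability over every $M_j$); and the appeal to existential closedness of $\cK$ must be made with $A=M_j$, not $A=M_i$, which the paper's definition permits since $M_j\in\cK$ and $\tuple{a}\in M_j$.
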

	
	The proof of this theorem relies on $\Fraisse$'s Theorem, which we state below.  In the next section, we will discuss how $\Fraisse$'s Theorem can be effectivised, which will provide us with a bound on the computational power needed to build existentially closed groups.
	
	\begin{theorem}[$\Fraisse$'s Theorem]
		\label{Fraisse}
		A countable class of finitely generated structures $\cK$ is the skeleton of an $\omega$-homogeneous structure, called the \emph{$\Fraisse$ limit} of $\cK$, iff it satisfies HP, JEP, and AP.
	\end{theorem}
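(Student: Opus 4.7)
The plan is to prove the two implications separately; the backward direction is the substantive one. For the forward direction, suppose $\cK = \Sk(M)$ with $M$ countable and $\omega$-homogeneous. HP is immediate. For JEP, given $A, B \in \cK$ realised as substructures of $M$, the substructure $\langle A \cup B \rangle \leq M$ is finitely generated and contains both. For AP, given $\alpha: C \into A$ and $\beta: C \into B$ with $A, B \leq M$, the finitely generated substructures $\alpha(C)$ and $\beta(C)$ of $M$ are isomorphic, so $\omega$-homogeneity provides an automorphism $\sigma$ of $M$ sending $\beta(C)$ to $\alpha(C)$; then $D = \langle A \cup \sigma(B) \rangle \leq M$, with $\hat\alpha$ the inclusion and $\hat\beta = \sigma \restricted B$, is the required amalgam.

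For the backward direction, I would build $M = \bigcup_n M_n$ as the union of a countable chain $M_0 \leq M_1 \leq \cdots$ of members of $\cK$, meeting two dovetailed families of requirements: \textbf{(U)} every $K \in \cK$ embeds into some $M_n$, handled by JEP-amalgamating successive enumerated members of $\cK$ with the current stage; and \textbf{(E)} for every $n$, every $K \in \cK$, and every embedding $\alpha: M_n \into K$, there exist $m \geq n$ and an embedding $\beta: K \into M_m$ extending the inclusion $M_n \into M_m$, handled by AP-amalgamating the current $M_s$ with $K$ over $M_n$ (using the chain inclusion $M_n \into M_s$ on one side and $\alpha: M_n \into K$ on the other) and setting $M_{s+1}$ to be the resulting amalgam. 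Countability of $\cK$ makes such dovetailing possible. By construction, every finitely generated substructure of $M$ lies in some $M_n \in \cK$, so by HP its isomorphism type belongs to $\cK$; conversely, (U) ensures every member of $\cK$ embeds into $M$. Hence $\Sk(M) = \cK$.

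The main obstacle is verifying that $M$ is $\omega$-homogeneous. Given finitely generated $A_0, B_0 \leq M$ and an isomorphism $\phi_0: A_0 \to B_0$, I would construct an increasing chain $\phi_0 \subseteq \phi_1 \subseteq \cdots$ of partial isomorphisms between finitely generated substructures of $M$, whose union is an automorphism extending $\phi_0$, via a back-and-forth argument. The crucial extension step, covering a new element $a \in M$ on the domain side, goes as follows: writing $A_k, B_k$ for the domain and range of $\phi_k$, choose $n$ large enough that $B_k \leq M_n$, and use AP to amalgamate $M_n$ and $\langle A_k \cup \{a\}\rangle$ over $A_k$ (along $\phi_k: A_k \to B_k \leq M_n$ on one side and the inclusion $A_k \leq \langle A_k \cup \{a\}\rangle$ on the other), obtaining $K \in \cK$ with embeddings $\gamma: M_n \into K$ and $\delta: \langle A_k \cup \{a\}\rangle \into K$ that agree on $A_k$ via $\phi_k$. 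Applying (E) to $\gamma$ yields $m \geq n$ and $\beta: K \into M_m$ extending the chain inclusion, so the composite $\beta \circ \delta$ is an embedding of $\langle A_k \cup \{a\}\rangle$ into $M$ extending $\phi_k$ and covering $a$, giving the desired $\phi_{k+1}$. The range side is symmetric via $\phi_k^{-1}$. The delicate point is the interplay between the AP-amalgam produced in $\cK$ and the extension property (E) that realises it back inside $M$; once this extension lemma is in hand, the back-and-forth concludes routinely.
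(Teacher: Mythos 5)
Your proof is correct and is the standard textbook argument for \Fraisse's Theorem: the forward direction via strong $\omega$-homogeneity to verify AP, and the backward direction via a countable chain meeting universality and extension requirements, followed by a back-and-forth to establish $\omega$-homogeneity. The paper does not prove this theorem itself but cites it to the literature (Hodges, Chapter 7.1), and your argument matches that classical proof.
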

	
	Further detail on $\Fraisse$'s Theorem and its variations can be found in most standard model theory textbooks; for example \cite{Hodges-model-theory}, Chapter 7.1.
	
	\begin{proof}[Proof of Theorem \ref{characterisation of skeleton of ec group}]
		It is clear that the skeleton of an existentially closed group $M$ is existentially closed and satisfies HP, JEP, and AP.
		
		For the other direction, suppose $\cK$ is an existentially closed collection of finitely generated groups satisfying HP, JEP, and AP.  By $\Fraisse$'s Theorem, there is an $\omega$-homogeneous group $M$ with $\Sk(M) = \cK$.  Now it suffices to show that $M$ is existentially closed.  
		
		Consider $\exists \tuple{x} \varphi(\tuple{x}, \tuple{g})$, where $\varphi$ is quantifier-free and $\tuple{g} \in G \in \Sk(M)$.  Suppose in addition that $\varphi$ is solved in some $N \geq M$.  Since $G \in \Sk(M)$, it follows that $N \geq G$.  Thus, by existential closure of $\cK$, there is some $H \in \cK$ with $H \geq G$ and such that $\varphi$ is solved in $H$.  But $H \leq M$, since $\Sk(M) = \cK$, so $\varphi$ is solved in $M$, as required. 
	\end{proof}
	
	\begin{rem}
		This proof goes through equally well for other theories, so every existentially closed \Fraisse\ class gives rise to an $\omega$-homogeneous existentially closed structure and every $\omega$-homogeneous existentially closed structure determines an existentially closed $\Fraisse$\ class.
	\end{rem}
	
	\section{Building existentially closed groups effectively}
	In this section, we present the second preliminary construction needed for our work --- a computable version of $\Fraisse$'s Theorem (Theorem \ref{effective fraisses theorem}) due to \cite{effective-fraisse} --- and show that it relativises.  This will allow us to effectively construct existentially closed groups with various properties.  
	
	In order to state the effective $\Fraisse$'s Theorem, we need to effectivise the notion of skeleton, and properties that a skeleton may satisfy, from above.
	
	\begin{defn}
		A sequence $\bK = \seq{A_i, \tuple{a_i}}{i<\omega}$ is a \emph{representation} of a collection $\cK$ of finitely generated structures if $\cK = \{A_i : i < \omega\}$ (up to isomorphism), $\tuple{a_i}$ is a finite tuple, $A_i$ is generated by $\tuple{a_i}$, and the domain of each $A_i$ is a subset of $\omega$.
		
		A representation is \emph{computable} if the sequence $\seq{\tuple{a_i}}{i < \omega}$ is computable and the functions, relations, and constants of $\seq{A_i}{i < \omega}$ are uniformly computable.
		
		A \emph{computable skeleton} is a computable representation of a skeleton. \qedhere
	\end{defn}

	\begin{defn}
		A computable skeleton $\bK = \seq{A_i, \tuple{a_i}}{i<\omega}$ has the \emph{computable extension property}, or \emph{computable-EP}, if there is a partial computable function that takes a pair $(i, \varphi(\tuple{x}, \tuple{a}))$, where $\varphi$ is quantifier-free with parameters from $A_i$, and returns a structure $(A_j, \tuple{a_j})$, an embedding $\alpha: A_i \into A_j$, and a tuple $\tuple{b} \in A_j$ such that $A_j \models \varphi(\tuple{b}, \alpha(\tuple{a}))$ if such a structure exists, and that does not halt otherwise. 
		
		If there is such $(A_j, \tuple{a_j})$ and $\tuple{b}$, say $\varphi(\tuple{x}, \tuple{a})$ is \emph{consistent with $A_i$ in $\bK$}. \qedhere
	\end{defn}
	
	Note that $\bK$ is existentially closed in the sense of Definition \ref{properties of skeletons} exactly when, for every $i$ and $\varphi(\tuple{x}, \tuple{a})$ with $\tuple{a} \in A$, $\varphi$ is consistent with $A_i$ in $\bK$ iff $\varphi$ is consistent with $A_i$.  In light of this, when $\bK$ is existentially closed, we say $\bK$ has \emph{computable existential closure}, or \emph{computable EC} instead of the computable extension property.\\
	
	We are now in a position to state an effectivisation of $\Fraisse$'s Theorem due to Csima--Harizanov--Miller--Montalb\'an.  (Although they prove other effectivisations, we will refer to this one as \emph{the effective $\Fraisse$'s Theorem} in this paper.)
	
	\begin{theorem}[{Effective $\Fraisse$'s Theorem, \cite[Theorem 3.12]{effective-fraisse}}]
		\label{effective fraisses theorem}
		Let $\bK$ be a computable skeleton with HP, JEP, and AP.  Then $\bK$ has a computable $\Fraisse$ limit if and only if it has a computable representation with the computable extension property.
	\end{theorem}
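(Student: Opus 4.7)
The plan is to prove the two implications separately; the forward direction is straightforward, while the converse carries the substantive content. For the forward direction, suppose $M$ is a computable Fraïssé limit of $\cK$. Enumerate all finite tuples $\seq{\tuple{a_i}}{i<\omega}$ from $M$ and set $A_i := \langle \tuple{a_i} \rangle \leq M$. Because $M$ is computable, the restrictions of $M$ to each $A_i$ form a uniformly computable family, so $\seq{A_i,\tuple{a_i}}{i<\omega}$ is a computable representation of $\cK$. For the extension property, observe that $\omega$-homogeneity of $M$ combined with $\Sk(M)=\cK$ implies that a formula $\varphi(\tuple{x},\tuple{a})$ is consistent with $A_i$ in $\bK$ precisely when $M\models \exists\tuple{x}\,\varphi(\tuple{x},\tuple{a})$; so on input $(i,\varphi)$ one simply searches $M$ for a witness $\tuple{b}$ and returns the index $j$ of the tuple $(\tuple{a_i},\tuple{b})$ together with the inclusion $A_i \hookrightarrow A_j$.

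For the converse, suppose we are given a computable representation $\bK$ with computable-EP. I would build $M$ as the direct limit of a uniformly computable chain $A_{i_0} \stackrel{\alpha_0}{\hookrightarrow} A_{i_1} \stackrel{\alpha_1}{\hookrightarrow} \cdots$ in $\bK$, choosing each $A_{i_{s+1}}$ and embedding $\alpha_s$ by applying the computable-EP function to a carefully chosen quantifier-free formula over $A_{i_s}$. I enumerate two families of requirements in a round-robin fashion: (E$_j$) \emph{every $A_j\in\bK$ embeds into the chain}, which together with $\Sk(M)\subseteq \cK$ will yield $\Sk(M)=\cK$; and (H$_{\tuple{c},\tuple{d},j}$) \emph{every partial isomorphism $\langle \tuple{c} \rangle \to \langle \tuple{d} \rangle$ between finitely generated substructures already present in the chain, together with every $A_j \supseteq \langle \tuple{c} \rangle$ in $\bK$, extends to an embedding $A_j \hookrightarrow A_{i_t}$ for some later $t$}, which by a standard back-and-forth argument yields $\omega$-homogeneity of $M$. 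Since the chain and all embeddings are computable, $\Delta^{\mathrm{qf}}(M)$ is computable in the resulting enumeration.

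The main obstacle is that computable-EP operates on single quantifier-free formulas, whereas both families of requirements demand realising an entire (potentially infinite) quantifier-free diagram of some $A_j$. My plan is to decompose each such requirement into an infinite sequence of finite sub-tasks indexed by an effective enumeration of the diagram of $A_j$, and interleave all sub-tasks in a bookkeeping schedule. At each stage, working on a particular sub-task, I extend the in-progress partial embedding by one element, applying computable-EP to the quantifier-free formula specifying the needed relations between the new element and the elements already mapped. The consistency of this formula in $\bK$ (so that computable-EP halts) is ensured by amalgamating $A_{i_s}$ with a copy of $A_j$ over the substructure already embedded using AP, then descending to a finitely generated member of $\cK$ via HP; JEP handles the initialisation when no embedding has yet been started. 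In the limit, every requirement is satisfied, so $M$ is the Fraïssé limit of $\cK$.
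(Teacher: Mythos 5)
First, a point of comparison: the paper does not prove this theorem at all — it is imported verbatim from \cite{effective-fraisse} — so there is no in-paper argument to measure you against; your proposal has to stand on its own. Your forward direction essentially does. The one gloss is that the domain of $\langle \tuple{a_i} \rangle$ inside $M$ is only c.e.\ as a subset of $\mathrm{dom}(M)$, so you must re-present each $A_i$ via a computable injective enumeration of the values $w(\tuple{a_i})$ (possible uniformly because equality in the computable $M$ is decidable); with that repair, the $\omega$-homogeneity argument identifying ``consistent with $A_i$ in $\bK$'' with ``realised in $M$ over $\tuple{a_i}$'' is correct, and the search for a witness has exactly the required halting behaviour.

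The converse has a genuine gap, sitting precisely at the obstacle you flag. The members of $\bK$ are finitely \emph{generated}, not finite, so $\Delta^{\mathrm{qf}}(A_j)$ is infinite, and embedding $A_j$ into the chain means producing a single tuple $\tuple{b}$ with $\qftp(\tuple{b}) = \qftp(\tuple{a_j})$ — an infinite condition. Your sub-task decomposition imposes only a finite fragment of this condition each time it calls computable-EP, but each call commits a concrete witness tuple to the chain, and since every map in the chain is an embedding, the quantifier-free type of that tuple in the limit is frozen at the stage it appears, when only finitely many atomic facts have been imposed. The witness may therefore satisfy extra relations (generating a proper quotient of the relevant substructure of $A_j$) or violate later inequations; this cannot be repaired at subsequent sub-tasks, and restarting with fresh tuples yields in the limit only tuples realising longer and longer finite fragments of $\Delta^{\mathrm{qf}}(A_j)$, with no tuple realising all of it. That this is not a removable technicality is exactly the content of Theorem \ref{macintyre-neumann-rips}: every existentially closed group realises every finite fragment of the diagram of every finitely generated group with c.e.\ word problem, yet need not contain that group. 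The same defect breaks your consistency argument, since amalgamating over ``the substructure already embedded'' via AP presupposes that the partial map built so far is a genuine partial isomorphism onto its image, which the finite fragments do not guarantee — so computable-EP may simply diverge at a later sub-task. (A secondary issue: your homogeneity requirements cannot even be computably enumerated, since recognising that $\tuple{c} \mapsto \tuple{d}$ induces an isomorphism of generated substructures is $\Pi^0_1$; homogeneity of the limit should instead be deduced non-effectively, as it is isomorphism-invariant.) The proof has to exploit that the extension function returns a whole member $(A_j, \tuple{a_j})$ of the computable representation, carrying its complete uniformly computable diagram, together with a certified embedding, and the chain must be assembled only from such returned structures and embeddings — compare the paper's use of the theorem in Theorem \ref{M_0 is computable in hp}, where each extension is produced as an explicit presentation $\pres{\tuple{a}_i, \tuple{t}}{\varphi^+ \cup R_i}$ located in the master list and the embedding is certified by group theory rather than by a search.
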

	
	The relativisation of this theorem will be our primary method for effectively constructing existentially closed groups.  It follows by relativising the proof of the effective $\Fraisse$ Theorem in \cite{effective-fraisse}.
	
	\begin{defn}
		\label{def of rel fraisse props}
		Let $\Tdega$ be a Turing degree.  $\bK$ is an \emph{$\Tdega$-computable skeleton} if there is an $\Tdega$-computable enumeration of $\bK$ such that the constants, functions, and relations are uniformly $\Tdega$-computable.
		
		$\bK$ has the \emph{$\Tdega$-computable extension property}, written $\Tdega$-EP, (or \emph{$\Tdega$-computable existential closure}, written $\Tdega$-EC, if $\bK$ is existentially closed) if there is a partial $\Tdega$-computable function $f$ that takes an index $i$ for $A_i \in \bK$ and a quantifier-free $\varphi(\tuple{x}, \tuple{a})$, where $\tuple{a} \subseteq A$, and returns an index $j$ for $A_j \in \bK$ and a $\tuple{b} \subseteq A_j$ such that $A_j \models \varphi(\tuple{b}, \tuple{a})$, if such a $j$ and $\tuple{b}$ exist.  If no such $j$ and $\tuple{b}$ exist, $f$ does not halt.
	\end{defn}
	
	\begin{theorem}
		\label{rel effective fraisses theorem}
		Let $\textbf{a}$ be a Turing degree and $\bK$ an $\Tdega$-computable skeleton with HP, JEP, and AP.  Then $\bK$ has an $\Tdega$-computable $\omega$-homogeneous $\Fraisse$ limit if and only if it has the $\Tdega$-EP.
	\end{theorem}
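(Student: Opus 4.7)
The plan is to relativize the proof of Theorem \ref{effective fraisses theorem} in \cite{effective-fraisse}: each computable step in the original argument is replaced by the corresponding $\mathbf{a}$-computable step, using the $\mathbf{a}$-EP in place of the computable EP. The argument splits in two directions. From the $\mathbf{a}$-EP we construct an $\mathbf{a}$-computable $\omega$-homogeneous $\Fraisse$ limit via an $\mathbf{a}$-computable back-and-forth; from an $\mathbf{a}$-computable $\Fraisse$ limit we extract the $\mathbf{a}$-EP by searching inside the limit with oracle $\mathbf{a}$.

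For the forward direction, fix the partial $\mathbf{a}$-computable function $f$ witnessing $\mathbf{a}$-EP. Build an increasing chain $B_0 \leq B_1 \leq \cdots$ in $\bK$ whose union $M$ is the desired limit. Interleave two kinds of requirements: \emph{embedding requirements} $E_n$ asserting that $A_n$ embeds in $M$, and \emph{homogeneity requirements} $H_{\alpha}$ asserting that every partial isomorphism $\alpha$ between finitely generated substructures of $M$ extends to an automorphism of $M$. At each stage, invoke $f$ on the current $B_s$ together with a quantifier-free formula capturing the next finite commitment of the requirement under consideration; JEP ensures solvability of such formulas for embedding requirements, and AP does the same for homogeneity requirements, so $f$ converges. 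HP gives $\Sk(M) \subseteq \bK$, and the requirements together ensure $\Sk(M) = \bK$ and $\omega$-homogeneity of $M$.

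For the backward direction, suppose $M$ is an $\mathbf{a}$-computable $\omega$-homogeneous $\Fraisse$ limit of $\bK$. Given $(i, \varphi(\tuple{x}, \tuple{a}))$, search $\mathbf{a}$-computably for an embedding $\iota : A_i \into M$ together with a tuple $\tuple{b}' \in M$ witnessing $\varphi(\tuple{x}, \iota(\tuple{a}))$ and an index $j$ such that the finitely generated subgroup of $M$ generated by $\iota(A_i) \cup \tuple{b}'$ matches the copy of $A_j$ in the enumeration. By $\omega$-homogeneity and $\Sk(M) = \bK$, such data exists exactly when $\varphi$ is consistent with $A_i$ in $\bK$, so the search halts exactly when it should; output $(j, \tuple{b}')$.

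The main obstacle is handling the fact that the quantifier-free type of a generating tuple of $A_n$ may not be $\mathbf{a}$-computable. In the forward direction, each embedding requirement $E_n$ must therefore be distributed across infinitely many back-and-forth stages, handling one finite quantifier-free formula at a time via JEP and $f$; similarly for homogeneity requirements. The remaining verifications --- uniform $\mathbf{a}$-computability of the enumeration of $M$ and convergence of the back-and-forth --- are inherited from the original proof, which passes through unchanged with $\mathbf{a}$ as the oracle.
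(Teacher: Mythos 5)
Your proposal matches the paper's treatment: the paper gives no separate proof of this theorem, simply noting that it "follows by relativising the proof of the effective \Fraisse\ Theorem in \cite{effective-fraisse}", which is exactly your plan. Your sketch of the relativised back-and-forth (including the point that each embedding and homogeneity requirement must be met one finite quantifier-free commitment at a time, since quantifier-free types of generating tuples need not be $\Tdega$-computable) is a faithful account of what that relativisation involves.
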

	
	\section{An existentially closed group in the halting problem}
	In this section, we prove our first main result characterising the minimal Turing degree necessary for constructing existentially closed groups.
	
	\MainOne
	
	We prove each half of the result separately over the next two subsections.
	
	\subsection{Every existentially closed group computes $0'$}
	\label{section: every ec group computes hp}
	The main result of this subsection is the following theorem.
	
	\begin{theorem}
		\label{every ec group computes hp}
		Let $M$ be an existentially closed group.  Then $\Delta^{qf}(M) \geq_{\mathrm{T}} 0'$.
	\end{theorem}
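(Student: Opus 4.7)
The plan is to show that $\Delta^{\mathrm{qf}}(M)$ can decide, for each parameter-free quantifier-free $L$-formula $\varphi(\tuple{x})$, whether $\varphi$ is realisable in some group; call this the \emph{consistency set} $S$. Combining Proposition \ref{every turing degree contains a group} with Theorem \ref{higman's embedding theorem}, there is a finitely presented group $G = \pres{\tuple{g}}{R}$ with $W(G) \equiv_{\mathrm T} 0'$. For each word $w$ on $\tuple{g}$, $w \in W(G)$ iff the formula $\bland R(\tuple{x}) \wedge w(\tuple{x}) \neq 1$ is inconsistent, so $0' \leq_{\mathrm T} S$; it therefore suffices to prove $S \leq_{\mathrm T} \Delta^{\mathrm{qf}}(M)$.

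The main step exploits existential closure together with free products to show $\varphi \in S$ iff $M \models \exists \tuple{x}\, \varphi(\tuple{x})$. The backward direction is immediate; for the forward direction, if $H$ is a group with $H \models \varphi(\tuple{h})$ for some $\tuple{h}$, then the free product $M * H$ is a supergroup of $M$ in which $\exists \tuple{x}\, \varphi(\tuple{x})$ holds, so by existential closure of $M$ it already holds in $M$. Consequently $S$ is c.e.\ relative to $\Delta^{\mathrm{qf}}(M)$: to recognise $\varphi \in S$, enumerate tuples from $M$ and check $\varphi$ against the quantifier-free diagram.

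It remains to show $S^{\mathrm{c}}$ is c.e.\ from the empty oracle. Writing $\varphi^+ = \{u_i : i \in I\}$ and $\varphi^- = \{v_j : j \in J\}$, the formula $\varphi$ is inconsistent exactly when in the finitely presented group $\pres{\tuple{x}}{\varphi^+}$ some $v_j$ equals $1$; by Proposition \ref{enumeration of G from presentation} this is uniformly c.e.\ in $\varphi$. Combining, $S$ is both c.e.\ in $\Delta^{\mathrm{qf}}(M)$ and co-c.e.\ in $\emptyset$, hence $\Delta^{\mathrm{qf}}(M)$-computable. The crux of the argument is the free-product observation converting consistency into realisability in $M$; once that is in place the rest is routine computability-theoretic bookkeeping, and I expect no further obstacle.
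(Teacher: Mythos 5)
Your proof is correct and is essentially the paper's argument: both reduce the word problem of a finitely presented group of degree $0'$ to $\Delta^{\mathrm{qf}}(M)$ by observing that a system $\bland R(\tuple{x})=1 \land w(\tuple{x})\neq 1$ is consistent iff $w \neq 1$ in the presented group, that the set of inconsistent such systems is c.e., and that existential closure (via the free product, which you make explicit where the paper leaves it implicit) converts consistency into realisability in $M$, so the complement is $M$-c.e. The one imprecision is your opening citation: Higman's theorem applies to finitely generated groups with a \emph{computable set of relators}, not to arbitrary finitely generated groups whose word problem has degree $0'$ (such a group need not even have a c.e.\ word problem), so you must first exhibit a group with a c.e.\ relating set encoding $\emptyset'$ --- e.g.\ $\pres{a,b}{\{[a,b^nab^{-n}] : n\in\emptyset'\}}$ --- pass to a computable relating set by Craig's trick, and only then apply Higman, exactly as in the paper's Lemma \ref{existence of fp group computing hp}.
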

	
	In order to prove this, we require two group-theoretic lemmas.
	
	\begin{lemma}
		\label{existence of fp group computing hp}
		There is a finitely presented group $G$ such that $W(G) \equiv_{\mathrm{T}} 0'$.
	\end{lemma}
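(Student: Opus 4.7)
The plan is to construct such a $G$ by first building a finitely generated group with a computably enumerable set of relators whose word problem has Turing degree exactly $0'$, then appealing to Higman's Embedding Theorem to pass to a finitely presented supergroup without losing the degree of the word problem.

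First I would produce a finitely generated group $H = \pres{\tuple{h}}{R}$ with $R$ a c.e.\ set of relators and $W(H) \equiv_{\mathrm{T}} 0'$. On one hand, Proposition \ref{every turing degree contains a group} gives a finitely generated group of any chosen Turing degree; a standard sharpening (which is the content of the stronger Ziegler result flagged after Proposition \ref{every turing degree contains a group}, or which can be extracted from a direct encoding of the halting set into a relating set) ensures that when the target degree is c.e.\ we may take the relating set itself to be c.e. Applied to $0'$, this produces the desired $H$: by Proposition \ref{enumeration of G from presentation} the word problem $W(H)$ is then c.e.\ (since $W(H)$ is the set of conjugates of cyclic permutations of elements of the c.e.\ set $R$), and it has degree $0'$ by construction.

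Next I invoke Higman's Embedding Theorem. By Craig's Trick (Remark \ref{Craig}), the c.e.\ presentation of $H$ may be replaced by one with a computable set of relators on the same finite generating tuple. Theorem \ref{higman's embedding theorem} then produces a finitely presented supergroup $G$ containing $H$. Since $G$ is finitely presented, Proposition \ref{enumeration of G from presentation} shows that $W(G)$ is c.e., hence $W(G) \leq_{\mathrm{T}} 0'$. Conversely, the embedding $H \hookrightarrow G$ is given by expressing each generator of $H$ as a specific word in the generators of $G$; substituting these words computably converts any word in $H$'s generators to a word in $G$'s generators with the same value, yielding a computable (in fact many-one) reduction $W(H) \leq_{\mathrm{T}} W(G)$. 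Combining, $0' \equiv_{\mathrm{T}} W(H) \leq_{\mathrm{T}} W(G) \leq_{\mathrm{T}} 0'$, so $W(G) \equiv_{\mathrm{T}} 0'$.

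The main obstacle is the first step: one must begin not merely with a finitely generated group of degree $0'$ but with one whose set of relators is c.e., so that Higman's theorem (and Craig's trick) can be applied. Once this c.e.\ presentation is in hand, Higman's theorem automatically preserves $W(H)$ as a subset of $W(G)$ up to a computable translation, and the upper bound on $W(G)$ is immediate from finite presentability; the degree equivalence then falls out with no additional work.
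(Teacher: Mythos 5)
Your proposal is correct and follows essentially the same route as the paper: build a finitely generated group with a c.e.\ relating set whose word problem has degree $0'$, then apply Craig's Trick and Higman's Embedding Theorem, using finite presentability for the upper bound and the embedding for the lower bound. The only difference is that the paper makes your first step concrete by exhibiting the explicit group $\pres{a,b}{\{[a,b^nab^{-n}] : n \in \emptyset'\}}$, using that the conjugates $b^nab^{-n}$ freely generate a copy of $F_\omega$ in $F_2$ --- exactly the ``direct encoding of the halting set into a relating set'' you gesture at.
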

	
	\begin{proof}[Proof Sketch]
		Consider the group
		\[ H = \pres{a,b}{\{[a,b^nab^{-n}] : n \in \emptyset'\}}. \]
		
		Using that $\{b^nab^{-n} : n < \omega\}$ freely generate a copy of $F_{\omega}$ --- the free group on countably infinitely many generators --- in $F_2$ and that $\emptyset'$ is c.e., one can show that $W(H) \equiv_{\mathrm{T}} 0'$.
		
		Since $\emptyset'$ is c.e., Craig's trick (see Remark \ref{Craig}) and Higman's embedding theorem imply that $H$ embeds in a finitely presented group $F$.  Then, since $H \leq F$ and $H$ is finitely generated, $W(F) \geq_{\mathrm{T}} 0'$ and, since $F$ is finitely presented, $W(F) \leq_{\mathrm{T}} 0'$.
		
		(In fact, Clapham has shown that the finitely presented group $F$ guaranteed by Higman's theorem can always be chosen to have the same Turing degree as the original computably presented group \cite{Clapham}.)
	\end{proof}
	
	The second will allow us to encode the word problem of any finitely presented group into any existentially closed group.  It may be useful to recall the notation $w(\tuple{x})$ from Definition \ref{defn: word}.
	
	\begin{lemma}
		Let $G = \langle \tuple{g} \mid R_0(\tuple{g}), \ldots, R_{m-1}(\tuple{g}) \rangle$ be a finitely presented group.  Define $$\varphi_w^G(\overline{x}) = \text{``} R_0(\overline{x}) = 1 \land \cdots \land R_{m-1}(\overline{x}) = 1 \land w(\overline{x}) \neq 1 \text{''}$$ where $|\tuple{x}| = |\tuple{g}|$.
		
		Then $\exists \tuple{x} \varphi_w^G(\overline{x})$ is consistent iff $w(\tuple{g}) \neq 1$ in $G$.
	\end{lemma}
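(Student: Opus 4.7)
The plan is to apply the universal property of the presentation $\pres{\tuple{g}}{R_0, \ldots, R_{m-1}}$, with $G$ itself serving as the witness in one direction.

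For the ``if'' direction, suppose $w(\tuple{g}) \neq 1$ in $G$. I would observe that $G$ itself, with the choice $\tuple{x} = \tuple{g}$, satisfies $\varphi_w^G$: the conjuncts $R_i(\tuple{g}) = 1$ hold by the very definition of the presentation of $G$, and the inequation $w(\tuple{g}) \neq 1$ is the hypothesis. Hence $G \models \exists \tuple{x} \varphi_w^G(\tuple{x})$, so the existential sentence is consistent (as a formula in the theory of groups).

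For the ``only if'' direction, suppose $\exists \tuple{x} \varphi_w^G$ is consistent, witnessed by some group $H$ and tuple $\tuple{h} \in H$ satisfying $R_i(\tuple{h}) = 1$ for all $i < m$ and $w(\tuple{h}) \neq 1$. The tuple $\tuple{h}$ then satisfies every defining relation of $G$, so by the universal property of the presentation there is a group homomorphism $\pi \from G \to \langle \tuple{h} \rangle \leq H$ sending $g_i \mapsto h_i$. Since homomorphisms preserve evaluations of words, if we had $w(\tuple{g}) = 1$ in $G$, then $w(\tuple{h}) = \pi(w(\tuple{g})) = 1$ in $H$, contradicting $w(\tuple{h}) \neq 1$. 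Therefore $w(\tuple{g}) \neq 1$ in $G$.

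There is no substantial obstacle here: the lemma is essentially a repackaging of the universal property of a finite presentation in the language of existential consistency. The one point worth flagging is that finiteness of the presentation is genuinely used, as otherwise $\varphi_w^G$ would be an infinite conjunction rather than a quantifier-free formula of group theory, and the notion of consistency being invoked would shift accordingly.
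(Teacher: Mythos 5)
Your proposal is correct and follows essentially the same argument as the paper: one direction is witnessed by $G$ itself realising $\varphi_w^G$ at $\tuple{x} = \tuple{g}$, and the other is the observation that $w(\tuple{g}) = 1$ must be a consequence of the relations, which you make precise via the universal property of the presentation. The paper's proof is just a terser version of the same reasoning.
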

	
	\begin{proof}
		If $w(\tuple{g}) = 1$ in $G$, then it must be the consequence of the relations $R_0, \dots, R_{m-1}$.  So, whenever all these relations hold, $w = 1$, so $\exists\tuple{x}\varphi_w^G$ is inconsistent.
		
		On the other hand, if $w(\tuple{g}) \neq 1$ in $G$, then $\varphi_w^G$ is realised in $G$ and hence is consistent.
	\end{proof}
	
	We are now in a position to prove the main theorem of this section.
	
	\begin{proof}[Proof of Theorem \ref{every ec group computes hp}]
		Assume $G$ is a finitely presented group with a word problem of degree $0'$ and let $M$ be any existentially closed group.  Let $W(G)$ be the word problem of $G$ and define $\varphi_w^G(\overline{x})$ as above.  By existential closure, each $\varphi_w^G$ must be realised in $M$ iff it is consistent.  
		
		We will show that $W(G)$ and $W(G)^c$ (i.e., the complement of $W(G)$) are both $M$-computably enumerable.  $W(G)$ is already c.e., so this direction is done.
		
		On the other hand, for a fixed word $w(\overline{x})$ and a given enumeration $(m_0, m_1, \ldots )$ of $M$, the procedure that tests whether $|\overline{x}|$-length strings from $M$ satisfy $\varphi_w^G(\overline{x})$ terminates iff $\varphi_w^G$ is consistent, and hence iff $w(\overline{g}) \neq 1$ in $G$.
	\end{proof}
	
	\subsection{$0'$ computes an existentially closed group}
	In this section, we construct an existentially closed group $M_0$ whose quantifier-free diagram is computed by $0'$.  We will do this in two parts: we first build $M_0$ non-effectively following \cite{Ziegler}, and then use the machinery of the effective $\Fraisse$'s Theorem to show that it can be built in $0'$.
	
	\begin{theorem}[\cite{Ziegler}]
		\label{def of M_0}
		There is an existentially closed group $M_0$ whose skeleton consists of precisely those finitely generated groups with computable sets of relators.
	\end{theorem}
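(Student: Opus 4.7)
The plan is to apply the characterisation in Theorem \ref{characterisation of skeleton of ec group} to the class $\cK$ of all isomorphism types of finitely generated groups admitting a computable set of relators. Since such a presentation is coded by a natural number (the size of the generating tuple) together with an index for a $\{0,1\}$-valued total computable function, $\cK$ has at most countably many isomorphism types, so it is a legitimate candidate for a skeleton; it therefore suffices to verify that $\cK$ is existentially closed and has HP, JEP, and AP.

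For HP: if $G \in \cK$ and $H \leq G$ is finitely generated, Higman's Embedding Theorem (Theorem \ref{higman's embedding theorem}) embeds $G$ into a finitely presented group $F$; then $H \leq F$, and one more application of Higman's theorem gives $H \in \cK$. For JEP: the presentation $\pres{\tuple{g}, \tuple{h}}{R \cup S}$ of the free product of $G = \pres{\tuple{g}}{R}$ and $H = \pres{\tuple{h}}{S}$ is manifestly computable when $R$ and $S$ are. For AP: given a finitely generated $C$ embedding in both $G, H \in \cK$ via $\alpha, \beta$, the free product with amalgamation $G *_C H$ as in Definition \ref{defn: group constructions} has relating set $R \cup S \cup \{\alpha(c)\beta(c)^{-1} : c \in \tuple{c}\}$, which is computable since only finitely many new relations are added; standard facts about amalgamated products ensure that $G$ and $H$ both embed into $G *_C H$ and that these embeddings agree on $C$.

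The main step is existential closure. Given $A = \pres{\tuple{a}}{R(\tuple{a})} \in \cK$ and a quantifier-free $\varphi(\tuple{x}, \tuple{a})$ (in the form guaranteed by Proposition \ref{no disjunctions for ec}) realised as $\varphi(\tuple{b}, \tuple{a})$ in some group $B \geq A$, I would set
\[ \hat A = \pres{\tuple{a}, \tuple{x}}{R(\tuple{a}) \cup \varphi^+(\tuple{x}, \tuple{a})}, \]
freely adjoining $\tuple{x}$ to $A$ subject to the equations of $\varphi$. The relating set remains computable since $R$ is and $\varphi^+$ is finite, so $\hat A \in \cK$. The universal property of presentations yields a homomorphism $\pi \colon \hat A \to B$ sending $\tuple{a} \mapsto \tuple{a}$ and $\tuple{x} \mapsto \tuple{b}$; since $\pi$ restricts to the identity on $A$, the inclusion $A \hookrightarrow \hat A$ is injective, so $A \leq \hat A$. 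Moreover, for every inequation ``$u(\tuple{x}, \tuple{a}) \neq 1$'' in $\varphi^-$: if $u(\tuple{x}, \tuple{a}) = 1$ were to hold in $\hat A$, then applying $\pi$ would give $u(\tuple{b}, \tuple{a}) = 1$ in $B$, contradicting $B \models \varphi(\tuple{b}, \tuple{a})$. Hence $\hat A \models \varphi(\tuple{x}, \tuple{a})$, and $\hat A$ witnesses existential closure.

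The subtlest point in the argument will be verifying that the inequations of $\varphi$ genuinely survive in $\hat A$ despite the possible collapse of relations when amalgamating; but this is precisely where the homomorphism $\pi$ to $B$ does the work for us. Having verified all four properties, Theorem \ref{characterisation of skeleton of ec group} produces the desired existentially closed group $M_0$ with $\Sk(M_0) = \cK$.
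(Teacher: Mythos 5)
Your proposal is correct and follows essentially the same route as the paper: verify that the class of finitely generated groups with computable relating sets is an existentially closed \Fraisse\ class and invoke Theorem \ref{characterisation of skeleton of ec group}, with the existential-closure step handled by adjoining $\varphi^+$ to a presentation of $A$ and using the induced homomorphism onto the realising supergroup to see that the inequations survive. The only cosmetic difference is your HP verification via two applications of Higman's theorem, which the paper itself notes as an alternative to its more elementary Craig's-trick argument.
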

	
	We will see later how $M_0$ fits into a larger class of existentially closed groups.
	
	\begin{proof}
		Let $\cK_0$ be the collection of finitely generated groups with computable sets of relators.  By Theorem \ref{characterisation of skeleton of ec group}, it suffices to show that $\cK_0$ is existentially closed and has HP, JEP, and AP.  (See Definition \ref{properties of skeletons}).
		
		For HP, let $G = \pres{\tuple{g}}{R}$ be a finitely generated group with computable set of relators, and let $H \leq G$ with $H$ generated by $\tuple{h} = (h_0, \ldots, h_{n-1})$ in $G$.  Writing $h_i = w_i(\tuple{g})$, we obtain a new presentation $G = \pres{\tuple{g}, \tuple{h}}{R \cup \{w_i(\tuple{g})h_i^{-1} : i < n\}}$ which still has a computable relating set.  Using Proposition \ref{enumeration of G from presentation}, we get a computable enumeration of $W(G, \tuple{g} \cup \tuple{h})$.  Looking at the subsequence consisting only of words in $\tuple{h}$, we get that $W(H, \tuple{h})$ is c.e.  Thus, $\pres{\tuple{h}}{W(H, \tuple{h})}$ gives a presentation of $H$ with a c.e.\ set of relators.  A presentation of $H$ with a computable set of relators can be obtained using Craig's Trick, outlined in Remark \ref{Craig}.  (HP also follows more easily and less elementarily follows from Higman's embedding theorem (Theorem \ref{higman's embedding theorem})). 
		
		Suppose $G$ and $H$ are in $\cK_0$ and $F \in \cK_0$ embeds in both $G$ and $H$ .  Then $G *_F H$ has a computable set of relators (as can be seen in Definition \ref{defn: group constructions}).  This gives JEP and, for $F = \{ 1 \}$, AP.
		
		Finally, to show existential closure, let $G \in \cK_0$ with computable presentation $\pres{\tuple{g}}{R}$ and let $\varphi(\tuple{x}, \tuple{a})$ be a quantifier-free formula with parameters from $G$ that solved in some group $H \geq G$.  Say $H \models \varphi(\tuple{h}, \tuple{a})$ and, recalling Definition \ref{def: formulas}, write $\varphi^+$ for the positive part of $\varphi$ and $\varphi^-$ for the negative part of $\varphi$.  Consider the group $F = \langle \tuple{h}, \tuple{g} \mid R(\tuple{g}) \cup \varphi^+(\tuple{h}, \tuple{a}) \rangle$.  Since $F$ clearly is finitely generated with a computable set of relators, we just need to show that it solves $\varphi$.
		
		It is clear that $\overline{h}$ solves $\bland\varphi^+(\tuple{x}, \tuple{a})$ in $F$.  On the other hand, any inequation in $F$ is a consequence of the relations $R(\tuple{g}) = 1$ and $\bland \varphi^+(\tuple{h}, \tuple{a})$.  But all these equations hold in $H$, so if any inequation of $\varphi^-$ were to hold in $F$, it would also hold in $H$, contradicting our assumption that $H \models \varphi(\tuple{h}, \tuple{a})$.
	\end{proof}
	
	Now that we know that $M_0$ exists, we can prove the main result of this section; namely that $0'$ computes the quantifier-free diagram of $M_0$.  To do this, we apply the $0'$-effective $\Fraisse$'s Theorem (Theorem \ref{rel effective fraisses theorem}).
	
	\begin{theorem}
		\label{M_0 is computable in hp}
		$M_0$ is $0'$-computably presentable.
	\end{theorem}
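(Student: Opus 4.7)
The plan is to apply the relativised effective $\Fraisse$ Theorem (Theorem~\ref{rel effective fraisses theorem}) with oracle $\Tdega = 0'$ to the class $\cK_0$ of Theorem~\ref{def of M_0}.  Concretely, what I need is a $0'$-computable representation $\bK$ of $\cK_0$ together with a verification that $\bK$ has the $0'$-EC property.

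For the representation, I would take $\bK = \seq{A_{(e,n)},(g_0,\ldots,g_{n-1})}{(e,n)\in\omega^2}$ where $A_{(e,n)} = \pres{g_0,\ldots,g_{n-1}}{W_e}$ and $\seq{W_e}{e<\omega}$ is the Kleene enumeration of c.e.\ sets of words in countably many generators (under a fixed coding).  Craig's Trick (Remark~\ref{Craig}) identifies groups with c.e.\ relator sets and groups with computable relator sets up to isomorphism, so $\bK$ really represents $\cK_0$.  By Proposition~\ref{enumeration of G from presentation}, each word problem $W(A_{(e,n)})$ is c.e.\ uniformly in $(e,n)$, so the quantifier-free diagrams are uniformly $0'$-computable and $\bK$ is a $0'$-computable skeleton.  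HP, JEP, and AP for $\bK$ are inherited from the proof of Theorem~\ref{def of M_0}: the constructions used there (restriction to a finitely generated subgroup, free product, amalgamated product) produce c.e.\ relator sets uniformly computably from the inputs.

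For the $0'$-EC procedure, on input $(i,\varphi(\tuple{x},\tuple{a}))$ I would form the canonical amalgam
\[
A_j = \pres{\tuple{g_i},\tuple{x}}{R_i(\tuple{g_i}) \cup \varphi^+(\tuple{x},\tuple{a})},
\]
whose index $j$ is computable from $(i,\varphi)$ and with candidate witness tuple $\tuple{x}$.  By the existential-closure step in the proof of Theorem~\ref{def of M_0}, $\varphi$ is consistent with $A_i$ in $\cK_0$ exactly when (i) the natural map $A_i\into A_j$ is injective and (ii) each inequation of $\varphi^-$ holds at $\tuple{x}$ in $A_j$.  Condition (ii) is a $\Pi^0_1$ question about the uniformly c.e.\ word problem of $A_j$ and so is decided directly by $0'$; condition (i) is the conjunction over all words $v$ in $\tuple{g_i}$ of the $0'$-decidable statement ``$v = 1$ in $A_i$ iff $v = 1$ in $A_j$''.

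The hard part will be the injectivity check (i), which is \emph{a priori} a $\Pi^0_2$ condition in $(i,\varphi)$, whereas the halting set of a partial $0'$-computable function is $\Sigma^0_2$.  The plan is to exploit the universality of the canonical amalgam --- every supergroup of $A_i$ realising $\varphi$ factors through $A_j$ --- together with the explicit group-theoretic structure of the normal closure of $\varphi^+$ inside $A_i \ast F(\tuple{x})$, showing that failures of injectivity are witnessed uniformly in a $\Sigma^0_2$ way and that consistency itself is $\Sigma^0_2$, hence $\Delta^0_2$, and so $0'$-decidable.  Once this is in place, Theorem~\ref{rel effective fraisses theorem} produces a $0'$-computable $\omega$-homogeneous $\Fraisse$ limit of $\bK$, which by Theorem~\ref{def of M_0} and the uniqueness of countable $\omega$-homogeneous structures with a given skeleton must be $M_0$.
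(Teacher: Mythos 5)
Your overall architecture --- enumerate the computable presentations, pass to their (uniformly $0'$-computable) quantifier-free diagrams, and use the canonical amalgam $A_j=\pres{\tuple{g_i},\tuple{x}}{R_i\cup\varphi^+}$ as the extension --- is exactly the paper's. The divergence is in how the $0'$-EC clause is verified, and this is where your proposal has a genuine gap. You set yourself the task of \emph{deciding} consistency of $\varphi$ with $A_i$, reduce it to injectivity of $A_i\into A_j$ plus the inequations, correctly observe that the injectivity clause is a priori $\Pi^0_2$, and then assert, as a ``plan,'' that consistency is nevertheless $\Sigma^0_2$ and hence $\Delta^0_2$. That assertion is the entire content of the hard step and no argument is given for it. The natural bounds one gets are $\Pi^0_2$ (proof search against the $0'$-computable diagram of $A_i$) and $\Sigma^0_3$ (search for a witnessing supergroup $A_k$ in the list, since ``$A_i$ embeds in $A_k$ via the natural map'' is itself $\Pi^0_2$); I do not see how to extract a $\Sigma^0_2$ bound without already having in hand a $0'$-computable existentially closed supergroup of $A_i$ to search in, which is circular.

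The paper's procedure never tests injectivity or the inequations at all. It computes (with $0'$) the index $j$ of the canonical amalgam in the enumeration and returns $(j,\tuple{t})$; correctness on consistent inputs is automatic from the universal property you yourself quote. If some $G\geq A_i$ solves $\varphi$ at $\tuple{g}$, then $\langle\tuple{g},\tuple{a}_i\rangle\leq G$ is a quotient of $A_j$, so any word on $\tuple{a}_i$ that becomes trivial in $A_j$ is already trivial in $A_i$ (giving injectivity), and any inequation of $\varphi^-$ that failed in $A_j$ would fail in $G$, contradicting the choice of $\tuple{g}$. So your conditions (i) and (ii) hold \emph{because} the input is consistent; they need not, and should not, be decided by the algorithm. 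The only place your worry genuinely bites is the requirement that the partial function diverge on inconsistent inputs --- a point the paper's write-up also elides by restricting attention to consistent $(i,\varphi)$ from the outset --- but your route of establishing outright $0'$-decidability of consistency is both stronger than what the verification of correctness requires and, as written, not a proof.
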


	\begin{proof}
		As before, let $\cK_0 = \{G :\text{$G$ has a computable set of relators} \}$.  By the relativised effective $\Fraisse$'s Theorem, it suffices to list the finitely generated groups with computable relating set so that their function and constant symbols are uniformly $0'$-computable, the sequence of generators is computable, and such that it satisfies $0'$-EC.  
		
		Let $A_i = \pres{\tuple{a}_i}{R_i}$, $i < \omega$, be a computable enumeration of all computable presentations whose generating sets are an initial segment of $\omega$.  

		In order to put this into the context of Theorem \ref{rel effective fraisses theorem}, we need to obtain the quantifier-free diagram of each group in the sequence.  Since each group has a computable relating set, Proposition \ref{enumeration of G from presentation} implies the $W(G)$, and hence the quantifier-free diagram of $G$, has c.e.\ degree.  Hence, this can be done with an oracle for $0'$.
		
		Thus, $\mathbb{K}_0 = \seq{A_i, \tuple{a}_i}{i < \omega}$ is $0'$-computable enumeration of $\cK_0$ such that the quantifier-free diagrams of the groups are uniformly $0'$-computable.

		We will now show that $\bK_0$ has $0'$-EC.  Let $\varphi(\tuple{x}, \tuple{a})$ be a quantifier-free formula with parameters from $A_i$ and which is solved in some $G \geq A_i$.  We will show that there is a $0'$-procedure for finding a $j$ such that $A_j \geq A_i$ and $A_j \models \exists \tuple{x} \varphi(\tuple{x}, \tuple{a})$.  
		
		First note that by writing the finitely many elements of $\tuple{a}$ in terms of the generators $\tuple{a}_i$, and possibly adding conjuncts of the form ``$a_i^k = a_i^k$'', we may transform $\varphi$ into an equivalent formula with parameters consisting exactly of $\tuple{a}_i$.  Thus, without loss of generality, we write $\varphi = \varphi(\tuple{x}, \tuple{a}_i)$.
		
		By assumption, we can computably obtain from $i$ a computable presentation $\pres{\tuple{a}_i}{R_i}$ for $A_i$.  Consider the presentation $\pres{\tuple{a}_i, \tuple{t}}{ \varphi^+(\tuple{t}, \tuple{a}_i) \cup R_i}$, where $\tuple{t}$ consists of the first natural numbers not in $\tuple{a}_i$.  Set $S = \varphi^+(\tuple{t}, \tuple{a}_i) \cup R_i$.  Since $S$ is computable, there is some $\ell$ such that $\pres{\tuple{a}_i, \tuple{t}}{S}$ is the $\ell$th presentation in the list.  $\ell$ can be found computably in $0'$ by asking, for each  presentation $\pres{\tuple{a}_k}{R_k}$ with the right generators, whether there is a word which is in $R_k$ and not in $S$ or vice versa.  
		
		We will show that $A_{j} \models \exists \tuple{x} \varphi(\tuple{x}, \tuple{a}_i)$.  Indeed, let $\tuple{g}$ solve $\varphi$ in $G$.  Then $\langle \tuple{g}, \tuple{a}_i \rangle$ is a quotient of $A_{j}$, since it satisfies all the relations $R_j$.  But every formula in $\varphi^{-}(\tuple{x}, \tuple{a}_i)$ is satisfied by $\tuple{g}$ in $G$ and hence, since taking a quotient forces more words to be the identity, $\tuple{t}$ also solves $\varphi(\tuple{x}, \tuple{a}_i)$ in $R_{j}$.
		
		Since $\langle \tuple{a}_i \rangle$ generates $A_i$ in $G$, a similar argument also shows that $A_i \leq A_j$.
	\end{proof}

	Putting together the main results of this and the previous subsections, Theorem \ref{every ec group computes hp} and Theorem \ref{M_0 is computable in hp}, we complete the proof of Theorem \ref{existence of ec groups is at level of hp} that the existence of existentially closed groups is exactly at the level of $0'$.

	\section{Existentially closed supergroups}	
	\label{existentially closed supergroups}
	We now turn to the question of the complexity of building an existentially closed group \emph{over} a given finitely generated one.  Again, we are able to exactly identify the minimal Turing degree of an \emph{existentially closed supergroup} of $G$.  While the result and proof are a relativisation of Theorem \ref{existence of ec groups is at level of hp}, the details raise subtleties that were not covered in the unrelativised case, in particular regarding the interaction of the enumeration degrees and the Turing degrees.
	
	\begin{defn}
		Let $G$ be a finitely generated group.  An existentially closed group $M$ is an \emph{existentially closed supergroup} of $G$ iff $G$ embeds into $M$. \qedhere
	\end{defn}
	
	Although this definition works equally well for \emph{uncountable} existentially closed groups, as before, we restrict our attention to countable groups.
	
	There will always be many existentially closed groups extending a particular $G$.  However, the main theorem of this section is a relativisation of Theorem \ref{existence of ec groups is at level of hp} characterising, for every finitely generated $G$, the minimal degree of an existentially closed supergroup of $G$. Note that even the existence of a minimal such degree is not a priori obvious.  
	
	The optimal Turing degree is the degree of a set we call $K_{W(G)}$ (see Definition \ref{def of Gamma and L}), which we will see is the maximal Turing degree in the enumeration degree of $W(G)$.
	
	\begin{theorem}
		\label{relativisation of existence of ec group is level of hp}
		Let $G$ be finitely generated and let $\Tdega$ be the Turing degree of $K_{W(G)}$.  Then $\Tdega$ is exactly the level of the existence of an existentially closed supergroup of $G$.
	\end{theorem}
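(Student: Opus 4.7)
The plan is to prove both directions separately, following the template of Theorem~\ref{existence of ec groups is at level of hp} but with $W(G)$ as a background oracle and with Ziegler's relativised Higman embedding theorem playing the role previously taken by its classical counterpart. The fact that the optimal oracle turns out to be $K_{W(G)}$ rather than something simpler like $(W(G))'$ forces the enumeration-degree framework flagged in the paper's overview into the argument in a fundamental way.

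For the lower bound, the goal is to show that any existentially closed supergroup $M$ of $G$ satisfies $\Delta^{\mathrm{qf}}(M) \geq_{\mathrm{T}} K_{W(G)}$. I would first use (the strengthened form of) Proposition~\ref{every turing degree contains a group} together with Ziegler's relativisation of Higman's theorem to produce a finitely generated group $\hat{G} \geq G$ that is \emph{finitely presented over $G$}, i.e., admits a presentation $\pres{\tuple{g},\tuple{t}}{S}$ with $\tuple{g}$ generating $G$ and $S$ a finite set of words in $\tuple{g}\cup\tuple{t}$, such that $W(\hat{G}) \equiv_{\mathrm{T}} K_{W(G)}$. The starting point is the analogue of Lemma~\ref{existence of fp group computing hp}, replacing $\emptyset'$ with a $W(G)$-c.e.\ set whose Turing degree realises $K_{W(G)}$. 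Once $\hat{G}$ is in hand, I mimic the coding of Theorem~\ref{every ec group computes hp}: for each word $w$ on $\tuple{g}\cup\tuple{t}$ set $\varphi_w(\tuple{x},\tuple{g}) = \bland S(\tuple{g},\tuple{x}) \wedge w(\tuple{g},\tuple{x}) \neq 1$, reading $\tuple{g}$ as parameters from the copy of $G$ inside $M$. Then $\exists\tuple{x}\,\varphi_w$ is consistent iff $w \neq 1$ in $\hat{G}$, and by existential closure $M$ realises $\varphi_w$ iff it is consistent, yielding a $\Delta^{\mathrm{qf}}(M)$-computable decision procedure for $W(\hat{G})$.

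For the upper bound, I would run the $K_{W(G)}$-effective \Fraisse\ construction (Theorem~\ref{rel effective fraisses theorem}) on the skeleton $\cK_G$ of all finitely generated groups $H \geq G$ admitting a $W(G)$-computable set of relators extending a fixed presentation of $G$. HP (via Craig's trick relative to $W(G)$), JEP and AP (via amalgamated free products over $G$), and existential closure (by adjoining witnesses and the positive part of the formula as new relators, exactly as in Theorem~\ref{def of M_0}) all transfer from the proof of Theorem~\ref{def of M_0}, since the class of $W(G)$-computable sets is closed under the relevant operations. The content is in producing a $K_{W(G)}$-computable representation $\bK_G$ of $\cK_G$ satisfying $K_{W(G)}$-EC: list all $W(G)$-computable relating sets $R$ extending the fixed presentation of $G$; each resulting word problem is $W(G)$-c.e.\ by Proposition~\ref{enumeration of G from presentation}, hence the quantifier-free diagrams are uniformly $K_{W(G)}$-computable; and the EC search for a target index from a consistency request reduces, as in the proof of Theorem~\ref{M_0 is computable in hp}, to an equality test between $W(G)$-computable relating sets, which is $K_{W(G)}$-computable.

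The hard part will be the lower bound: pinpointing the precise relativisation of Higman's theorem that actually achieves $W(\hat{G}) \equiv_{\mathrm{T}} K_{W(G)}$, rather than the apparently weaker bound $\leq_{\mathrm{T}}(W(G))'$ that a naive relativisation would give. This is exactly the point where enumeration-reducibility becomes indispensable --- enumerating $W(G)$-c.e.\ sets of relators is something any oracle that merely enumerates $W(G)$ can do, but deciding equality of two such presentations (which is what the effective \Fraisse\ search really amounts to) requires the extra strength packaged in $K_{W(G)}$. Verifying that $K_{W(G)}$ is simultaneously necessary on the group-theoretic side and sufficient on the enumeration-theoretic side is the genuinely new content beyond the $0'$ case.
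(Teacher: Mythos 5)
Your lower bound is essentially the paper's: realise the Turing and enumeration degree of $K_{W(G)}$ by a finitely generated group (Theorem \ref{every star degree contains a group}), embed its free product with $G$ into a finitely presented extension $F$ of $G$ via the generalised Higman theorem, and decide $W(F)$ from any existentially closed supergroup using the formulas $\varphi_w$ together with the fact that $W(F)$ itself is c.e.\ in $W(G)$. One caution: for the Higman step you need the word problem of the group you embed to be $\leq_{\mathrm{e}} W(G)$, not merely c.e.\ in $W(G)$ --- these differ when $W(G)$ is not of total enumeration degree. Taking the coded set to be $K_{W(G)}$ itself, which is $\equiv_{\mathrm{e}} W(G)$ and of maximal Turing degree in that enumeration degree, is exactly what makes this work.

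The genuine gap is in your upper bound, and your closing diagnosis locates the difficulty in the wrong place. Running the effective \Fraisse\ construction over the class of groups with $W(G)$-computable relating sets does not produce a $K_{W(G)}$-computable limit. The equality and consistency tests you invoke (``is there a word in $R_k \setminus S$ or $S \setminus R_k$?'' for $W(G)$-computable $R_k, S$) are $\Sigma^{0,W(G)}_1$ questions, decidable from $W(G)'$ but not in general from $K_{W(G)}$: by Proposition \ref{L is Turing maximal}, $K_{W(G)}$ Turing-computes only the sets $\leq_{\mathrm{e}} W(G)$, and it is in general strictly below $W(G)'$ --- that strict inequality is the entire point of the theorem. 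Likewise, the word problems of groups with $W(G)$-computable relator sets are $W(G)$-c.e., hence not uniformly $\leq_{\mathrm{T}} K_{W(G)}$; the remark after Lemma \ref{M_H is W(H)' computable} notes that this class $\cK^{\mathrm{T}}_G$ strictly contains the correct skeleton $\{F : W(F) \leq_{\mathrm{e}} W(G)\}$. As written, your construction only re-proves Theorem \ref{jump of wp computes ec supergroup}, the $W(G)'$ upper bound. The missing idea is Soskov's enumeration jump inversion (Theorem \ref{enumeration jump inversion}): choose a \emph{total} $X \geq_{\mathrm{e}} W(G)$ with $J_e(X) \equiv_{\mathrm{e}} J_e(W(G))$, realise it as $W(H)$ for a finitely generated $H$ with $W(H) \equiv_{\mathrm{T}} X$ and $W(H) \equiv_{\mathrm{e}} X$ (Theorem \ref{every star degree contains a group}), and run your construction over $H$ rather than $G$. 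Totality gives $W(H)' \equiv_{\mathrm{T}} X' \equiv_{\mathrm{T}} K_{W(G)}$, so the $W(H)'$-computable limit $M_H$ is $K_{W(G)}$-computable, and $G$ embeds in $M_H$ because $W(G) \leq_{\mathrm{e}} W(H)$.
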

	
	As in the previous section, this requires two lemmas: one showing that $\Tdega$ is necessary --- i.e., every existentially closed supergroup of $G$ computes $\Tdega$; and one showing $\Tdega$ is sufficient --- i.e., $\Tdega$ computes an existentially closed supergroup of $G$.
	
	While the result for arbitrary $G$ is much more complicated than for $G = \{e\}$, we note that one direction relativises ``as expected''.
	
	\begin{theorem}
		\label{jump of wp computes ec supergroup}
		Let $G$ be finitely generated.  Then $W(G)'$ computes an existentially closed supergroup of $G$.
	\end{theorem}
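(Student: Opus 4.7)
The plan is to relativize the construction of $M_0$ from Theorems \ref{def of M_0} and \ref{M_0 is computable in hp} wholesale to the oracle $W(G)$. Let $\cK_G$ be the class of finitely generated groups admitting a $W(G)$-computable set of relators. Crucially, $G$ itself belongs to $\cK_G$, since $\pres{\tuple{g}}{W(G)}$ is a presentation of $G$ with $W(G)$-computable relating set.

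First I would verify that $\cK_G$ is the skeleton of an existentially closed group by checking HP, JEP, AP, and existential closure. This is a line-by-line relativization of the proof of Theorem \ref{def of M_0}: the free-product and amalgamation constructions preserve $W(G)$-computability of the relating set; the HP argument goes through because the relativized Proposition \ref{enumeration of G from presentation} makes $W(H)$ c.e.\ in $W(G)$ whenever $H \le F \in \cK_G$, after which a $W(G)$-relativized Craig's trick yields a $W(G)$-computable relating set for $H$; and the presentation $\pres{\tuple{h}, \tuple{g}}{R(\tuple{g}) \cup \varphi^+(\tuple{h}, \tuple{a})}$ used in the existential-closure step remains $W(G)$-computable. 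Theorem \ref{characterisation of skeleton of ec group} then produces an existentially closed group $M_G$ with $\Sk(M_G) = \cK_G$; since $G \in \cK_G$, we have $G \le M_G$, so $M_G$ is an existentially closed supergroup of $G$.

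Next I would apply the relativized effective $\Fraisse$\ Theorem \ref{rel effective fraisses theorem} with oracle $W(G)'$. Enumerate all $W(G)$-computable presentations $A_i = \pres{\tuple{a}_i}{R_i}$ whose generators are an initial segment of $\omega$; this enumeration is uniformly $W(G)$-computable, and by the relativized Proposition \ref{enumeration of G from presentation} each word problem $W(A_i)$ is uniformly c.e.\ in $W(G)$, hence uniformly computable from $W(G)'$. Thus $\seq{A_i, \tuple{a}_i}{i<\omega}$ is a $W(G)'$-computable representation of $\cK_G$. To establish $W(G)'$-EC, given a consistent quantifier-free $\varphi(\tuple{x}, \tuple{a}_i)$, form the extended presentation $\pres{\tuple{a}_i, \tuple{t}}{R_i \cup \varphi^+(\tuple{t}, \tuple{a}_i)}$ exactly as before; locating its index $\ell$ on the list amounts to deciding equality of two $W(G)$-computable (equivalently, c.e.-in-$W(G)$) sets, which is $W(G)'$-computable, and the verification that $A_\ell$ solves $\varphi$ and extends $A_i$ transfers verbatim from the proof of Theorem \ref{M_0 is computable in hp}.

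The only substantive step is the index-search in the EC argument, which is inherently at the $W(G)'$ level because comparing two $W(G)$-computable sets requires the jump. There is no new group-theoretic content beyond Theorem \ref{M_0 is computable in hp}; the genuinely hard work is deferred to the sharper Theorem \ref{relativisation of existence of ec group is level of hp}, where the tight bound $K_{W(G)}$ (typically strictly below $W(G)'$) is obtained through a more delicate construction exploiting the enumeration-degree structure of $W(G)$.
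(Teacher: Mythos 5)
Your proposal is correct, but it takes a genuinely different route from the paper's proof. The paper establishes this result (as Theorem \ref{jump of group computes ec supergroup} via Lemmas \ref{ec fraisse class from enum red} and \ref{M_H is W(H)' computable}) by working with the skeleton $\cK_G = \{F : W(F) \leq_{\mathrm{e}} W(G)\}$, defined through enumeration reducibility, and realising it $W(G)'$-computably by enumerating presentations of finitely presented extensions of $G$ --- invoking the Generalised Higman Embedding Theorem (Theorem \ref{relativisation of higmans theorem}), filtering out presentations whose designated generators fail to generate a copy of $G$, and then listing all finitely generated subgroups. You instead relativise Theorems \ref{def of M_0} and \ref{M_0 is computable in hp} wholesale to the class $\cK_G^{\mathrm{T}}$ of groups with a $W(G)$-computable set of relators; this is precisely the alternative the paper itself records in the remark following Lemma \ref{M_H is W(H)' computable}. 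As noted there, $\cK_G^{\mathrm{T}} \supsetneq \cK_G$ in general, so your $M_G$ is typically a different existentially closed group with a strictly larger skeleton, but it is still $W(G)'$-computable and contains $G$ (since $\pres{\tuple{g}}{W(G)}$ witnesses $G \in \cK_G^{\mathrm{T}}$), which is all the theorem requires. Your route avoids the relativised Higman machinery and enumeration reducibility entirely; the paper's route produces the specific class $\cK_G$ that is needed downstream for the sharp bound $K_{W(G)}$ in Theorem \ref{relativisation of existence of ec group is level of hp}. One small caution: in your EC step, the index search is $W(G)'$-decidable because the relator sets in the list are \emph{uniformly $W(G)$-computable as characteristic functions} (e.g.\ after a uniform application of the relativised Craig's trick); deciding equality of two sets merely given by $W(G)$-enumerations is a $\Pi^0_2$ question relative to $W(G)$, so your parenthetical ``equivalently, c.e.-in-$W(G)$'' should be dropped.
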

	
	However, this is not optimal in general: $M_0$, the $\Fraisse$ limit of the computably presentable groups, is $0'$-computable and contains finitely generated subgroups of degree $0'$.
	
	\begin{notation}
		For the remainder of this section, fix a finitely generated group $G$.
	\end{notation}
	
	In the next subsection, we lay out the requisite background in \emph{enumeration degrees}, which we will see are interwoven with the structure of the \emph{finitely presented extensions} of $G$.  In the following subsection, we introduce the relativisations of the relevant group theoretic notions.  Then, we combine these ideas in the subsequent subsections to prove the main result.
	
	
	\subsection{Enumeration Reducibility}
	\label{subsection: enumeration reducibility}
	The notion of enumeration reducibility will be key to our understanding of the computational power of the existence of an existentially closed supergroup of $G$.  We review some background on enumeration reducibility here.  More information can be found in, for example, \cite[Chapter 2]{Soskova}.
	
	Intuitively, $Y \leq_{\mathrm{e}} X$ if any enumeration of $X$ yields an enumeration of $Y$.  The following definition makes this precise.
	
	\begin{defn}
		\label{enumeration red: def}
		Let $\seq{W_k}{k < \omega}$ be a computable listing of the c.e.\ sets.  
		
		Let $X$, $Y \subseteq \omega$ and let $\seq{D_i}{i < \omega}$ be a computable enumeration of the finite subsets of $\omega$.  Then $Y$ is \emph{enumeration reducible} to $X$, written $Y \leq_{\mathrm{e}} X$, iff there is some $k \in \omega$ such that for all $n \in \omega$,
		\[n \in Y \quad \text{if and only if}\quad \exists i(\langle n,i \rangle \in W_k \land D_i \subseteq X).\]
		
		This induces an equivalence relation on $2^{\omega}$ denoted by $Y \equiv_{\mathrm{e}} X$.  The equivalence classes are called \emph{enumeration degrees}.
	\end{defn}
	
	Two notions of relative computation that we have used thus far --- being Turing reducible to a set $X$ or being computably enumerable in a set $X$ --- can both be defined in terms of enumeration reducibility.
	
	\begin{prop}
		Let $X, Y \subseteq \omega$.  Then:
		\begin{itemize}
			\item $Y \leq_{\mathrm{T}}X$ iff $Y \oplus Y^c \leq_{\mathrm{e}} X \oplus X^c$.
			\item $Y$ is c.e.\ in $X$ iff $Y \leq_{\mathrm{e}} X \oplus X^c$.
		\end{itemize}
	\end{prop}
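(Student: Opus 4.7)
The plan is to prove both equivalences by a uniform translation between oracle Turing computations on $X$ and enumeration operators applied to $X \oplus X^c$. The key pivot is the observation that from any enumeration of $X \oplus X^c$ one can recover the characteristic function of $X$ (for each $n$, wait for whichever of $2n$ or $2n+1$ first appears), while conversely, with $X$ as a Turing oracle one can $X$-computably list all of $X \oplus X^c$. Thus ``being able to compute $X$'' and ``being able to enumerate $X \oplus X^c$'' are interchangeable, and each bullet becomes a matter of matching the finite use of a Turing/c.e.\ computation with the finite axioms of an enumeration operator as in Definition \ref{enumeration red: def}.

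For the first bullet, $(\Rightarrow)$: assume $Y = \Phi_e^X$. I would build an enumeration operator witnessing $Y \oplus Y^c \leq_{\mathrm{e}} X \oplus X^c$ whose axioms pair $2n$ (respectively $2n+1$) with those finite $D \subseteq \omega$ which, interpreted via their even and odd members as positive and negative oracle information about $X$, force $\Phi_e(n)$ to halt with output $1$ (respectively $0$). Since every convergent computation uses only finitely many oracle bits, the appropriate $D \subseteq X \oplus X^c$ will appear in any enumeration. $(\Leftarrow)$: assuming $Y \oplus Y^c \leq_{\mathrm{e}} X \oplus X^c$, feed an $X$-computable enumeration of $X \oplus X^c$ through the enumeration reduction to produce an enumeration of $Y \oplus Y^c$; to decide ``$n \in Y?$'' with oracle $X$, simply wait for the first of $2n, 2n+1$ to appear, which happens because $Y \oplus Y^c$ is total.

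The second bullet is proved by the same strategy, dropping the symmetric negative half. $(\Rightarrow)$: if $Y = W_e^X$, let the enumeration operator pair $n$ with every finite $D$ that, interpreted as partial oracle information, suffices for the $e$th $X$-c.e.\ enumeration to place $n$ into $W_e$. $(\Leftarrow)$: given the enumeration reduction $Y \leq_{\mathrm{e}} X \oplus X^c$, applying it to the $X$-computable enumeration of $X \oplus X^c$ yields an $X$-c.e.\ enumeration of $Y$.

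There is no genuine obstacle here --- the proposition is standard enumeration-degree bookkeeping, and each implication is almost immediate from the definitions once one has fixed the correspondence between the ``use'' of an oracle computation and the finite axiom certifying an enumeration-operator output. The only care required is consistent bookkeeping in the encoding: each positive oracle query answered ``yes'' about $m$ must be matched with $2m$ appearing in $X \oplus X^c$, and each ``no'' answer with $2m+1$, so that the translation between finite oracle uses and finite axioms is faithful in both directions.
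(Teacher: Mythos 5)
Your proof is correct: the translation between the finite use of an oracle computation and the finite axioms of an enumeration operator, together with the observation that an enumeration of $X \oplus X^c$ recovers the characteristic function of $X$, is exactly the standard argument for this classical fact. The paper states this proposition without proof, so there is nothing to compare against; your write-up fills that gap correctly, and the one point of care you flag (consistent even/odd bookkeeping of positive and negative oracle answers) is indeed the only delicate step.
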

	
	This motivates the following classical definition, which isolates a copy of the Turing degrees in the enumeration degrees.
	
	\begin{defn}
		A set $X$ is \emph{total} if $X \equiv_{\mathrm{e}} X \oplus X^c$, or equivalently if $X^c \leq_{\mathrm{e}} X$.  An enumeration degree $\Tdegb$ is \emph{total} if it contains a total set.
	\end{defn}	
	
	We will be interested in the subsets of $\omega$ that are enumeration reducible to a given set $X$.  
	
	\begin{defn}
		\label{def of Gamma and L}
		For a fixed computable listing of the c.e.\ sets $\seq{W_k}{k < \omega}$, define the \emph{enumeration operator}
		\[\Gamma_k(X) = \{n : \exists m \left( \langle n, m \rangle \in W_k \land D_m \subseteq X \right) \}.\]
		
		Furthermore, we write $K_X := \bigoplus_{k < \omega} \Gamma_k(X)$.
	\end{defn}
	
	Note that as $k$ varies in the natural numbers, $\Gamma_k(X)$ lists all the sets $Y \leq_{\mathrm{e}} X$.  Thus the following proposition is evident and implies that $K_X$ gives the maximal Turing degree that intersects the enumeration degree of $X$.  Moreover, $K_X$ is Turing-bounded above by $X'$.
	
	\begin{prop}
		\label{L is Turing maximal}
		Let $X$ be a set.  Then:
		\begin{itemize}
			\item $K_X \equiv_{\mathrm{e}} X$.
			\item $K_X \geq_{\mathrm{T}} Y$ for every $Y \leq_{\mathrm{e}} X$.
			\item $K_X \leq_{\mathrm{T}} X'$.
		\end{itemize}
	\end{prop}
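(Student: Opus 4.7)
The plan is to verify each of the three bullets directly from the definition of enumeration reducibility (Definition \ref{enumeration red: def}) and the construction of $\Gamma_k$ and $K_X$ (Definition \ref{def of Gamma and L}). None of the three claims looks to hide a real obstacle; the proposition is essentially packaging basic facts about enumeration operators together with the observation that the join $\bigoplus_k \Gamma_k(X)$ is a universal set for $\leq_{\mathrm{e}}$-reductions to $X$.

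First, for $K_X \equiv_{\mathrm{e}} X$, I would prove the two inequalities separately. For $K_X \leq_{\mathrm{e}} X$: given any enumeration of $X$, I can uniformly enumerate each $\Gamma_k(X)$ by searching through $W_k$ for pairs $\langle n, m\rangle$ with $D_m \subseteq X$ (a condition that becomes verifiable once enough of $X$ has been enumerated), and then enumerating $\langle k,n\rangle$ into the join. Packaged up, this is a single enumeration operator applied to $X$, yielding $K_X \leq_{\mathrm{e}} X$. For the reverse inequality, the identity is itself an enumeration operator: picking $W_{k_0} = \{\langle n,m\rangle : D_m = \{n\}\}$ gives $\Gamma_{k_0}(X) = X$, so $X$ appears verbatim as the $k_0$-th column of $K_X$. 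Reading off that column gives $X \leq_{\mathrm{e}} K_X$ (in fact $\leq_{\mathrm{T}}$).

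Second, for $K_X \geq_{\mathrm{T}} Y$ whenever $Y \leq_{\mathrm{e}} X$: by definition of $\leq_{\mathrm{e}}$, there is some $k$ with $Y = \Gamma_k(X)$, and then $Y$ is exactly the $k$-th column of $K_X$. Given $K_X$ as an oracle, membership in $Y$ is decided by the single query ``$\langle k,n\rangle \in K_X$?'', so $Y \leq_{\mathrm{T}} K_X$.

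Third, for $K_X \leq_{\mathrm{T}} X'$, I would observe that $\langle k,n\rangle \in K_X$ iff $\exists m\,(\langle n,m\rangle \in W_k \wedge D_m \subseteq X)$. Since the $W_k$ are uniformly c.e.\ and $D_m$ is a canonical finite set (so $D_m \subseteq X$ is decidable from $X$), this is a $\Sigma^X_1$ predicate in the parameters $k,n$. Hence $K_X$ is c.e.\ in $X$, and therefore $K_X \leq_{\mathrm{T}} X'$ by the standard fact that sets c.e.\ in $X$ are computable from the jump of $X$. The closest thing to a pitfall in the whole argument is just being careful that the uniformity in $k$ in the second and third bullets really does hold, which it does by construction.
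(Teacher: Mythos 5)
Your proof is correct and is exactly the verification the paper has in mind: the paper states this proposition without proof, deeming it ``evident'' from the observation that $\Gamma_k(X)$ for varying $k$ lists all $Y \leq_{\mathrm{e}} X$, and your three arguments (packaging the $\Gamma_k$ into a single enumeration operator, reading off columns, and noting $K_X$ is $\Sigma^{0,X}_1$) are precisely the details being elided. No gaps.
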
	
	
	As in the Turing degrees, there is a notion of a jump associated with the enumeration degrees.  Enumeration jumps were introduced, in a slightly different form, in \cite{Cooper}.
	
	\begin{defn}
		Define the \emph{enumeration jump} of $X$ by 
		\[ J_e(X) = K_X \oplus (K_X)^c. \qedhere \]
	\end{defn}
	
	Note that this definition immediately gives us that $J_e(X)$ is total for every $X$.  We continue to write $X'$ for the Turing jump of $X$.
	
	\begin{prop}\footnote{The author wishes to thank Mariya Soskova for pointing out this proposition.}
		\label{jumps of total sets a T equivalent}
		Let $X$ be total.  Then $K_X \equiv_1 X'$.  In particular, $J_e(X) \equiv_{\mathrm{T}} X'$.
	\end{prop}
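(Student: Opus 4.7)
The plan is to establish the two $\leq_1$ reductions separately and then derive the ``in particular'' clause as a corollary.

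For $K_X \leq_1 X'$, I would first observe that $K_X$ is c.e.\ in $X$ uniformly: unpacking the definition, $\langle k, n\rangle \in K_X$ iff $\exists m(\langle n,m\rangle \in W_k \land D_m \subseteq X)$, and since each $D_m$ is finite, the predicate $D_m \subseteq X$ is decidable from $X$ as a Turing oracle, so the outer existential puts $K_X$ into the class of c.e.-in-$X$ sets. The conclusion then follows by invoking the standard fact that $X'$ is $1$-complete for this class --- obtained via the s-m-n theorem together with a routine padding argument to ensure injectivity.

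For the reverse reduction $X' \leq_1 K_X$, this is precisely where the totality assumption enters. Since $X'$ is c.e.\ in $X$, we have $X' \leq_{\mathrm{e}} X \oplus X^c$; totality gives $X^c \leq_{\mathrm{e}} X$, so $X \oplus X^c \equiv_{\mathrm{e}} X$ and transitivity yields $X' \leq_{\mathrm{e}} X$. Since $\{\Gamma_k\}_{k < \omega}$ is a complete listing of enumeration operators, there is some index $k_0$ with $X' = \Gamma_{k_0}(X)$, and then the map $n \mapsto \langle k_0, n\rangle$ is a computable injection witnessing $X' \leq_1 K_X$.

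The ``in particular'' clause is immediate: $J_e(X) = K_X \oplus K_X^c \equiv_{\mathrm{T}} K_X$ because any set is Turing-equivalent to its direct sum with its own complement, and the first half of the proposition then gives $K_X \equiv_{\mathrm{T}} X'$. I do not anticipate a real obstacle anywhere --- the content is mostly bookkeeping, and the only conceptual point is recognising that the role of totality is precisely to upgrade ``$X'$ is c.e.\ in $X$'' (which unconditionally gives only $X' \leq_{\mathrm{e}} X \oplus X^c$) to the stronger ``$X' \leq_{\mathrm{e}} X$'' needed to realise $X'$ as a single $\Gamma_{k_0}(X)$. The only minor technical care is ensuring the reductions are injective so as to conclude $\equiv_1$ rather than merely $\equiv_{\mathrm{m}}$.
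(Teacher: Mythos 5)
Your proof is correct and takes essentially the same route as the paper's: both rest on the observation that totality upgrades ``c.e.\ in $X$'' to ``$\leq_{\mathrm{e}} X$'' uniformly, so that $K_X$, like $X'$, is $1$-complete for the sets c.e.\ in $X$. The paper packages this as a single $1$-completeness claim whereas you write out the two reductions separately (your column map $n \mapsto \langle k_0, n\rangle$ being exactly the uniformity the paper invokes), but the content is identical.
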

	
	\begin{proof}
		Since $X$ is total, $Y$ is computably enumerable in $X$ iff $Y \leq_{\mathrm{e}} X$.  Moreover, going from the c.e.\ index to the enumeration index is uniform, so $K_X \geq_1 Y$ for every $Y$ c.e.\ in $X$. Thus, $K_X$ is 1-complete and hence $K_X \equiv_1 X'$.
	\end{proof}
	
	The following result, a special case of Theorem 1.2 in \cite{Soskov}, shows that the enumeration jump operation satisfies a jump inversion to total sets\footnote{Again, the author thanks Mariya Soskova for making them aware of this result.}.
	
	\begin{theorem}[\cite{Soskov}]
		\label{enumeration jump inversion}
		Let $Y$ be a set.  Then there is a total $X \geq_e Y$ with $J_e(X) \equiv_{\mathrm{e}} J_e(Y)$.
	\end{theorem}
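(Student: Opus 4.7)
The plan is to obtain the inequality $J_e(Y) \leq_e J_e(X)$ for free from monotonicity of the enumeration jump, and then to invert the jump for the other direction. For monotonicity: if $X \geq_e Y$ via $Y = \Gamma_{k_0}(X)$, composition of enumeration operators yields a total computable $f$ with $\Gamma_k(Y) = \Gamma_{f(k)}(X)$ for every $k$. This single translation simultaneously witnesses $K_Y \leq_e K_X$ (translate indices and forward) and $K_Y^c \leq_e K_X^c$ (wait for $\langle f(k), n \rangle$ to appear in the enumeration of $K_X^c$, then enumerate $\langle k, n \rangle$), yielding $J_e(Y) \leq_e J_e(X)$. So the content of the theorem is to exhibit a total $X \geq_e Y$ with $J_e(X) \leq_e T$, where $T := J_e(Y)$.

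Once $X$ is built as a total set, both $J_e(X)$ and $T$ are total, so by Proposition \ref{jumps of total sets a T equivalent} the requirement $J_e(X) \leq_e T$ is equivalent to $X' \leq_T T$. Thus my goal becomes a Friedberg-type jump inversion relative to the enumeration degree of $Y$: build total $X \geq_e Y$ with $X' \leq_T T$. Carrying this out $T$-effectively is legitimate because $T$ enumerates $Y$ (since $Y \leq_e T$) and has enough Turing power (as $T \geq_T K_Y$ by Proposition \ref{L is Turing maximal}) to decide halting questions for partial computable functions that arise during the construction.

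The construction proceeds in $T$-effective stages, committing each bit of $X$ exactly once. Even positions are reserved for coding $Y$: declare $2n \in X$ iff $n \in Y$, committing each bit (into $X$ or into $X^c$) as $T$ enumerates $Y$ or $Y^c$ respectively. This secures totality on the even positions and gives $Y \leq_e X$ via $n \mapsto 2n$. Odd positions carry the Friedberg code for the jump: at stage $e$, reserve a fresh block of odd positions, consult $T$ to determine whether $\Phi_e^X(e) \downarrow$ under the commitments made so far (together with their forced extensions coming from the enumeration of $Y$), and record the answer on this block. The standard Friedberg analysis then yields $X \leq_T T$ and $X' \leq_T T$.

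The main obstacle is the self-reference inherent in the Friedberg step: the question \emph{does $\Phi_e^X(e)$ converge?} depends on the entire not-yet-built set $X$. This is handled in the usual way, by reserving the odd-block for $e$ in advance so it cannot be disturbed by later stages, and noting that convergence depends only on finitely many bits of $X$, all of which are $T$-effectively determined by stage $e$. The delicacy relative to the classical argument is that our hypothesis on $Y$ is purely enumeration-theoretic; this is exactly where totality of $T = J_e(Y)$ earns its keep, letting the construction pass between enumerating and deciding facts about $Y$-oracle computations at the points where Friedberg requires a definite answer. Once this is arranged, totality of $X$ and $Y \leq_e X$ fall out of the coding on the even positions, and $J_e(X) \equiv_T X' \leq_T T = J_e(Y)$ gives $J_e(X) \leq_e J_e(Y)$ since both sides are total, completing the proof.
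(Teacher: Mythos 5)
The paper does not actually prove this statement: it is imported verbatim as a special case of Theorem 1.2 of Soskov's paper (with $B_0=B_1=Y$, $Q=J_e(Y)$), so what follows is an assessment of your argument on its own terms. Your reductions are fine: monotonicity of the enumeration jump gives $J_e(Y)\leq_{\mathrm{e}} J_e(X)$ for free, and since $J_e(X)$ and $T:=J_e(Y)$ are both total, the remaining task is indeed to build a total $X\geq_{\mathrm{e}}Y$ with $X'\leq_{\mathrm{T}}T$. It is also true that $T$ computes $Y$ as a set, since $Y\leq_{\mathrm{e}}K_Y$ and $Y^c\leq_{\mathrm{m}}(K_Y)^c$, so the even-position coding is something $T$ can carry out.

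But that coding is exactly where the proof breaks. Declaring $2n\in X$ iff $n\in Y$ gives $Y\oplus Y^c\leq_{\mathrm{T}}X$, hence $(Y\oplus Y^c)'\leq_{\mathrm{T}}X'$; so if your construction also achieved $X'\leq_{\mathrm{T}}J_e(Y)$, you would have proved $(Y\oplus Y^c)'\leq_{\mathrm{T}}J_e(Y)$ for every $Y$. That inequality is false in general for non-total $Y$ (e.g.\ for suitable quasi-minimal sets), and if it were true the theorem would be vacuous: one could simply take $X=Y\oplus Y^c$, since monotonicity already gives $J_e(Y)\leq_{\mathrm{e}}J_e(Y\oplus Y^c)\equiv_{\mathrm{e}}(Y\oplus Y^c)'$. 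The gap between $J_e(Y)$ and $(Y\oplus Y^c)'$ is precisely what makes jump inversion to total degrees a theorem. The same obstruction appears operationally in your Friedberg step: the question ``is there a finite extension, consistent with $Y$ on the even positions, forcing $\Phi_e(e)$ to converge?'' is $\Sigma^0_1(Y\oplus Y^c)$, so deciding it requires the oracle $(Y\oplus Y^c)'$, which $T$ need not compute. What Soskov's proof actually does is code $Y$ into $X$ only \emph{positively} --- arranging $Y\leq_{\mathrm{e}}X$ without arranging $Y^c\leq_{\mathrm{e}}X$ --- via a forcing with partial (``regular'') enumerations, chosen so that the density/convergence questions asked during the construction are decidable from $J_e(Y)$ rather than from $(Y\oplus Y^c)'$. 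Your skeleton (monotonicity plus a jump-inversion construction) is the right shape, but the characteristic-function coding of $Y$ must be replaced by this weaker, enumeration-only coding for the argument to go through.
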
 
	
	(This is obtained from the corresponding result in \cite{Soskov} by setting $B_0 = B_1 = Y$, and $Q = J_e(Y)$.)
	
	As we will be working with enumeration degrees of word problems, we finish with a couple of results showing that word problems and enumeration degrees interact nicely.  
	
	\begin{prop}
		\label{e-degree of wp is well-defined}
		Let $G$ be a finitely generated group.  Then the enumeration degree of $W(G, \tuple{g})$ does not depend on the choice $\tuple{g}$ of generators of $G$.
	\end{prop}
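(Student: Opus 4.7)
The plan is to prove something slightly stronger than the statement itself: that $W(G, \tuple{h}) \leq_{\mathrm{m}} W(G, \tuple{g})$ for any two finite generating tuples $\tuple{g}, \tuple{h}$ of $G$, from which enumeration-equivalence (and indeed Turing-equivalence) follows by symmetry. This mirrors the standard proof that the Turing degree of a word problem is well-defined; the key algebraic content is identical, namely that substituting one generating set for the other preserves membership in the word problem.

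To carry this out, fix tuples $\tuple{g} = (g_1, \ldots, g_k)$ and $\tuple{h} = (h_1, \ldots, h_n)$ generating $G$. For each $i \leq n$, since $h_i \in G = \langle \tuple{g} \rangle$, choose a word $v_i(\tuple{g})$ on $\tuple{g}$ with $h_i = v_i(\tuple{g})$ in $G$; the finite tuple $(v_1, \ldots, v_n)$ is hard-coded into the reduction. Using a fixed computable coding of words on a finite alphabet by natural numbers, define a computable function $f$ on codes of words by literal substitution,
\[ w(\tuple{h}) \;\mapsto\; w(v_1(\tuple{g}), \ldots, v_n(\tuple{g})), \]
keeping inverses intact and concatenating in the obvious way. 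Then $w(\tuple{h}) = 1$ in $G$ if and only if $w(v_1(\tuple{g}), \ldots, v_n(\tuple{g})) = 1$ in $G$, so $w \in W(G, \tuple{h})$ iff $f(w) \in W(G, \tuple{g})$, giving a many-one reduction.

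To convert this to an enumeration reduction as in Definition \ref{enumeration red: def}, take the c.e.\ set $W_k = \{\langle n, m \rangle : D_m = \{f(n)\}\}$; unwinding the definition gives $n \in W(G, \tuple{h})$ iff there exists $m$ with $\langle n, m \rangle \in W_k$ and $D_m \subseteq W(G, \tuple{g})$, so $W(G, \tuple{h}) \leq_{\mathrm{e}} W(G, \tuple{g})$. Swapping the roles of $\tuple{g}$ and $\tuple{h}$ in the argument finishes the proof. No genuine obstacle arises: the proposition is essentially an observation that substitution of generators is a many-one reduction, which is strictly stronger than the enumeration reduction required by the statement.
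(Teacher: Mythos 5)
Your proof is correct and is exactly the standard argument the paper has in mind (the paper omits the proof, remarking only that it ``is simple and relies on $G$ being finitely generated''---which is precisely where you use finiteness, to hard-code the translation words $v_1,\ldots,v_n$). Establishing the stronger many-one reduction by substitution and then observing that $\leq_{\mathrm{m}}$ implies $\leq_{\mathrm{e}}$ is the intended route; no issues.
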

	
	The proof of this proposition is simple and relies on $G$ being finitely generated.
	
	Finally, the following result of Ziegler improves Proposition \ref{every turing degree contains a group}.
	
	\begin{theorem}[\cite{Ziegler2}]
		\label{every star degree contains a group}
		For every set $X$, there is a finitely generated group $G$ such that $W(G) \equiv_{\mathrm{T}} X$ and $W(G) \equiv_{\mathrm{e}} X$.
	\end{theorem}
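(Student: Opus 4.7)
The plan is to refine the classical construction witnessing Proposition~\ref{every turing degree contains a group} so that the resulting group matches $X$ in enumeration degree as well as Turing degree. Given $X \subseteq \omega$, I would choose a uniformly computable sequence of reduced words $r_0, r_1, \ldots$ in $F_2 = \langle a, b\rangle$ with the \emph{independence property}: for every $S \subseteq \omega$ and every $m$, $r_m \in \langle\langle\{r_n : n \in S\}\rangle\rangle_{F_2}$ if and only if $m \in S$. The standard choice $r_n = [a, b^n a b^{-n}]$ works, since the elements $b^n a b^{-n}$ ($n < \omega$) freely generate a free subgroup of $F_2$ by Nielsen--Schreier, so their commutators with $a$ are independent in the required sense via the Magnus Freiheitssatz. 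Define $G_X = \langle a, b \mid r_n : n \in X\rangle$.

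The two enumeration reducibilities then come essentially for free. For $W(G_X) \leq_{\mathrm{e}} X$: by Proposition~\ref{enumeration of G from presentation}, $W(G_X)$ is the set of products of conjugates of cyclic permutations of words in $\{r_n : n \in X\}$, and from any enumeration of $X$ one can uniformly enumerate this set. For $X \leq_{\mathrm{e}} W(G_X)$: by independence, $n \in X$ iff $r_n \in W(G_X)$, so watching an enumeration of $W(G_X)$ and outputting each $n$ whose $r_n$ appears enumerates $X$. The corresponding Turing reducibility $X \leq_{\mathrm{T}} W(G_X)$ is immediate from the same observation --- a single $W(G_X)$-oracle query decides $n \in X$.

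The remaining step, and the technical core, is $W(G_X) \leq_{\mathrm{T}} X$: the word problem of $G_X$ should be decidable relative to $X$. My plan is to exhibit, for each word $w$ of length $\ell$, a computable bound $N(\ell)$ such that $w \in W(G_X)$ if and only if $w$ is a consequence of $\{r_n : n \in X \cap [0, N(\ell)]\}$. Given such a bound, the $X$-effective decision procedure is: compute $N(|w|)$, query $X$ on $[0, N(|w|)]$, form the finitely presented group $G_X^{(N)} = \langle a, b \mid r_n : n \in X \cap [0, N(|w|)]\rangle$, and decide $w = 1$ there (which is decidable since $G_X^{(N)}$ is finitely presented by Novikov--Boone--Higman methods, and in fact one can invoke Clapham's refinement to control the degree uniformly).

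The hardest point I expect is establishing the length bound $N(\ell)$ simultaneously with independence. The cleanest route is to arrange that the $r_n$ satisfy a small cancellation condition such as $C'(1/6)$: Greendlinger's lemma then gives a linear Dehn function, forcing any derivation that $w = 1$ to use only finitely many $r_n$ whose indices are bounded computably in $|w|$. If the naive commutator family does not satisfy $C'(1/6)$, one modifies the $r_n$ by padding with sufficiently long aperiodic blocks, verifying that independence is preserved --- a routine but delicate check. Alternatively, for the specific family $[a, b^n a b^{-n}]$, one could perform a direct Magnus--Britton normal-form analysis presenting $G_X$ as an iterated HNN extension whose Britton reductions use only relators of bounded index in $|w|$.
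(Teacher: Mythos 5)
The paper does not prove this theorem: it is imported from \cite{Ziegler2}, where Ziegler establishes the strictly stronger statement $W(G)\equiv^* X$. What you propose is a direct proof of the weaker statement actually used here, via the classical construction $G_X=\langle a,b\mid [a,b^nab^{-n}]:n\in X\rangle$ --- the same family the paper itself uses in Lemma \ref{existence of fp group computing hp} for the special case $X=\emptyset'$. Your architecture is right: $W(G_X)\leq_{\mathrm{e}}X$ from Proposition \ref{enumeration of G from presentation}, $X\leq_{\mathrm{e}}W(G_X)$ and $X\leq_{\mathrm{T}}W(G_X)$ from independence, and $W(G_X)\leq_{\mathrm{T}}X$ from a computable bound on which relators a derivation of $w=1$ can use. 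The cleanest way to get both independence and the bound simultaneously is the one you mention last: the normal closure of $a$ in $F_2$ is free on $a_n=b^nab^{-n}$ ($n\in\mathbb{Z}$), conjugation by $b$ shifts indices, and $G_X$ is the semidirect product of $\mathbb{Z}=\langle b\rangle$ with the right-angled Artin group on vertex set $\mathbb{Z}$ whose edges are $\{k,k+n\}$ for $n\in X$. The normal form theorem for such graph groups gives independence ($[a_0,a_n]=1$ iff $\{0,n\}$ is an edge) and shows that deciding $w=1$ only requires knowing the edges among vertices $|k|\leq |w|$, i.e., querying $X$ up to $2|w|$. (Minor point: $n=0$ gives a trivial relator, so shift $X$ by one or assume $0\notin X$.)

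Two steps in your write-up are genuinely wrong as stated, though neither is fatal to the plan. First, $G_X^{(N)}$ being finitely presented does \emph{not} make its word problem decidable --- Novikov--Boone is precisely the theorem that finitely presented groups can have undecidable word problem --- and in any case you would need decidability uniformly in $N$; the decision procedure has to come from the same normal-form machinery that gives you the bound $N(\ell)$, not from finite presentability. Second, the commutator relators do not satisfy $C'(1/6)$: the piece $b^n$ is shared by $r_n$ and all $r_m$ with $m\geq n$ and has length about $1/4$ of $|r_n|$, so Greendlinger's lemma does not apply directly, and the padding you propose would then need a separate verification that the enumeration reductions and independence survive. You are better off discarding the small cancellation route entirely and committing to the semidirect product / Britton analysis. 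Finally, the Freiheitssatz is a one-relator statement and is not the right citation for independence of the full family.
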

	
	What Ziegler actually shows is that $W(G) \equiv^* X$, which is stronger than the conclusion we state here.  (Recall that $\leq^*$ was the notion of reducibility introduced by Ziegler in \cite[Chapter III.1]{Ziegler} which characterises when every existentially closed supergroup of a fixed finitely generated group must contain another finitely generated group.)  It is not hard to check from the definition that $X \leq^* Y$ implies $X \leq_{\mathrm{T}} Y$ and $X \leq_{\mathrm{e}} Y$.
	
	\subsection{Results on finitely presented extensions}
	
	Higman's Embedding Theorem played a key role in the argument that the existence of an existentially closed group is exactly at the level of $0'$.  In order to relativise that result, we need a relativisation of Higman's Theorem due to Ziegler in \cite{Ziegler}, which we present in this subsection.  We will see that this relativisation rests on enumeration reducibility rather than Turing reducibility.
	
	We start by relativising the notion of finitely presented.
	
	\begin{defn}
		\label{finitely presented extension}
		Let $G$ be a finitely generated group.  We say $F$ is a \emph{finitely presented extension of $G$}, or $F$ is \emph{finitely presented over $G$}, if there is a finite tuple $\tuple{f}$ and a finite collection of words $R(\tuple{g}, \tuple{f})$ such that
		\[ F = \langle \tuple{g}, \tuple{f} \mid W(G) \cup R(\tuple{g}, \tuple{f}) \rangle\]
		and $\tuple{g}$ generates a group isomorphic to $G$ in $F$.
	\end{defn}
	
	In other words, if we know that $G$ is the subgroup of $F$ generated by $\tuple{g}$, we only need finitely many more generators and relations to present $F$.
	
	Ziegler used this concept to prove a relativised analogue of Higman's Embedding Theorem \cite[Theorem II.3.10]{Ziegler}.  The fact that Higman's Theorem relativises suggests that in some sense it is not purely a coincidence that computability and embeddability align, but an incarnation of a deeper relation between the two fields.
	
	\begin{theorem}[Generalised Higman's Embedding Theorem]
		\label{relativisation of higmans theorem}
		Let $G$ and $H$ be finitely generated groups.  Then $W(G) \leq_{\mathrm{e}} W(H)$ iff $G$ embeds in a finitely presented extension $F$ of $H$.
	\end{theorem}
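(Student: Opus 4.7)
The plan is to prove each direction separately, with the easier direction being the ``embedding implies enumeration reducibility'' one.

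For the easy direction, suppose $F = \langle \tuple{h}, \tuple{f} \mid W(H) \cup R(\tuple{h}, \tuple{f}) \rangle$ is a finitely presented extension of $H$ and $G$ embeds in $F$ via generators $\tuple{g}$ where each $g_i = u_i(\tuple{h}, \tuple{f})$ for some words $u_i$. The first step is to observe that $W(F, \tuple{h}\cup\tuple{f})$ is the normal closure in the free group on $\tuple{h}\cup\tuple{f}$ of the (infinite, but enumeration-reducible to $W(H)$) set $W(H) \cup R$; by Proposition \ref{enumeration of G from presentation}, given any enumeration of $W(H)$ we obtain one of $W(F)$, so $W(F) \leq_{\mathrm{e}} W(H)$. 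Then $W(G, \tuple{g}) \leq_{\mathrm{e}} W(F)$ because we can enumerate the words $w(\tuple{g})$ trivial in $G$ by listing exactly those $w$ for which $w(\tuple{u}(\tuple{h},\tuple{f}))$ appears in the enumeration of $W(F)$. Chaining the two reductions and invoking Proposition \ref{e-degree of wp is well-defined} to get independence from the choice of generators finishes this direction.

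For the hard direction, suppose $W(G) \leq_{\mathrm{e}} W(H)$. The plan is to reduce to Higman's original theorem (Theorem \ref{higman's embedding theorem}) via a free product. First, form $K = G * H = \langle \tuple{g}, \tuple{h} \mid W(G) \cup W(H) \rangle$, which contains both $G$ and $H$ as natural subgroups. Then observe that the full relating set $W(G)\cup W(H)$ is enumeration-reducible to $W(H)$: the $W(H)$ part is immediate, and $W(G) \leq_{\mathrm{e}} W(H)$ is the hypothesis. Hence $K$ itself has a presentation on $\tuple{g}\cup\tuple{h}$ whose word problem is enumeration-reducible to $W(H)$.

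The substantive step, which I expect to be the main obstacle, is then to run a relativised version of Higman's construction, treating $W(H)$ as a ``given'' input rather than something we must encode from scratch. Concretely, I would mimic the Higman--Aanderaa--Cohen style proof of Theorem \ref{higman's embedding theorem}: the c.e. enumeration of the extra relators $W(G)$ (produced from an enumeration of $W(H)$ via the fixed enumeration operator witnessing $W(G)\leq_{\mathrm{e}} W(H)$) is encoded by a finite set of HNN stable letters and amalgamations, exactly as in the classical argument, while the $W(H)$ relators are left in place. The output is a group $F$ presented as $\langle \tuple{h}, \tuple{f} \mid W(H) \cup R(\tuple{h},\tuple{f})\rangle$ with $R$ finite, together with an embedding $K \hookrightarrow F$ that is the identity on $\tuple{h}$. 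Composing with $G \hookrightarrow K$ gives the desired embedding of $G$ into a finitely presented extension of $H$.

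The delicate point in this final step is verifying that $\tuple{h}$ continues to generate a copy of $H$ inside $F$, i.e.\ that the HNN and amalgamation steps introduce no new relations among the $\tuple{h}$. This follows, as in Higman's original argument, from repeated applications of Britton's Lemma and the normal form theorem for free products with amalgamation, ensuring that each step embeds its input faithfully. Once that is confirmed, $F$ is by construction a finitely presented extension of $H$ in the sense of Definition \ref{finitely presented extension}, completing the proof.
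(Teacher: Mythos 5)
First, note that the paper does not prove this theorem at all: it is quoted from Ziegler \cite[Theorem II.3.10]{Ziegler}, so there is no in-paper argument to compare yours against. Your forward direction (embedding implies $W(G) \leq_{\mathrm{e}} W(H)$) is correct and is essentially the observation the paper records later, that $W(F) \equiv_{\mathrm{e}} W(G)$ for any finitely presented extension $F$ of $G$: Proposition \ref{enumeration of G from presentation} turns an enumeration of $W(H) \cup R$ into an enumeration of $W(F)$, and restricting to words in the image of $\tuple{g}$ gives $W(G) \leq_{\mathrm{e}} W(F)$.

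The converse, however, is where all of the content lives, and your proposal does not prove it. The passage ``I would mimic the Higman--Aanderaa--Cohen style proof\ldots\ the c.e.\ enumeration of the extra relators is encoded by a finite set of HNN stable letters and amalgamations, exactly as in the classical argument, while the $W(H)$ relators are left in place'' is a restatement of the theorem, not an argument for it. The classical Higman construction does not encode an enumeration ``directly'' by stable letters; it passes through a substantial intermediate theory (benign subgroups of free groups, closure of that class under the operations generating the recursively enumerable sets, and an encoding of computations into presentations). To relativise it one must develop the analogous closure theory \emph{over $H$}: one needs a notion of a subgroup being benign \emph{relative to $H$}, closure of that class under the operations generating enumeration reducibility, and a verification that the finite-subset clause $D_m \subseteq W(H)$ in the definition of the enumeration operator can be realised by finitely many new relators on top of the infinite relator set $W(H)$ while keeping the copy of $H$ faithful. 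Britton's Lemma controls individual HNN steps but does not by itself deliver this global statement. Your reduction to the group $G * H$ (so that the group to be embedded already contains $H$ as a free factor with relating set $\leq_{\mathrm{e}} W(H)$) is a correct and sensible first step, but everything after it is deferred to ``the relativised Higman construction,'' which is exactly the theorem. Either carry out the relativised benign-subgroup machinery in detail or, as the paper does, cite Ziegler for this direction.
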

	
	\begin{rem}
		The case when $H = \{e\}$ is just the classical Higman's Embedding Theorem: Repeating a comment we made after Theorem \ref{higman's embedding theorem}, a group with a c.e.\ set of relators will also have a computable set of relators.  To see this, let $\pres{\tuple{g}}{\{R_i(\tuple{g}): i < \omega\} }$ be a presentation for a group with c.e.\ relating set.  Then $\pres{\tuple{g}}{\{R_i(\tuple{g})e^i: i < \omega \}}$ gives an equivalent presentation with a computable set of relators.  
		
		Setting $H = \{e\}$, Theorem \ref{relativisation of higmans theorem} tells us that $W(G) \leq_{\mathrm{e}} 0$ (i.e., $W(G)$ is c.e.) if and only if $G$ embeds in a finitely presented group.  By the argument above, this is a restatement of Higman's Embedding Theorem.
	\end{rem}
	
	\subsection{A degree computed by every existentially closed supergroup}
	Recall that $G$ is a finitely generated group, and we are interested in the complexity of the existentially closed supergroups of $G$.  This section is dedicated to showing that the existence of an existentially closed supergroup of $G$ is exactly at the level of $K_{W(G)}$ (see Definition \ref{def of Gamma and L}).
	
	\begin{notation}
		For the rest of this section, fix $K := K_{W(G)}$.
	\end{notation}
	
	In this subsection, we will show that every existentially closed supergroup of $G$ computes $K$, thereby completing the first half of Theorem \ref{relativisation of existence of ec group is level of hp}.
	
	\begin{theorem}
		\label{ev ec supergroup computes A}
		Every existentially closed supergroup $M$ of $G$ satisfies $\Delta^{qf}(M) \geq_{\mathrm{T}} K$.
	\end{theorem}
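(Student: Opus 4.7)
The overall strategy is to relativise the proof of Theorem \ref{every ec group computes hp}: in place of a finitely presented group whose word problem has degree $0'$, I would use Ziegler's Generalised Higman's Embedding Theorem (Theorem \ref{relativisation of higmans theorem}) to produce a finitely presented extension of $G$ whose word problem computes $K$, and then use the existential closure of $M$ over $G$ to transfer its complexity into $M$.

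First, apply Theorem \ref{every star degree contains a group} with $X = K$ to obtain a finitely generated $H$ with $W(H) \equiv_{\mathrm{T}} K$ and $W(H) \equiv_{\mathrm{e}} K$. By Proposition \ref{L is Turing maximal}, $K \equiv_{\mathrm{e}} W(G)$, so $W(H) \leq_{\mathrm{e}} W(G)$, and Theorem \ref{relativisation of higmans theorem} then gives a finitely presented extension $F = \langle \tuple{g}, \tuple{f} \mid W(G) \cup R(\tuple{g}, \tuple{f}) \rangle$ of $G$ (with $R = \{R_0,\dots,R_{m-1}\}$ finite) into which $H$ embeds. Since $H$ is finitely generated as a subgroup of $F$, we have $W(H) \leq_{\mathrm{T}} W(F)$; so it suffices to show that $W(F) \leq_{\mathrm{T}} \Delta^{\mathrm{qf}}(M)$, as this will give $K \equiv_{\mathrm{T}} W(H) \leq_{\mathrm{T}} \Delta^{\mathrm{qf}}(M)$.

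To prove $W(F) \leq_{\mathrm{T}} \Delta^{\mathrm{qf}}(M)$, I would show both $W(F)$ and $W(F)^c$ are c.e.\ from $\Delta^{\mathrm{qf}}(M)$. Using the fixed embedding $G \leq M$, the set $W(G)$ is even computable from $\Delta^{\mathrm{qf}}(M)$ (check whether $w(\tuple{g}) = 1$ in $M$). Since the extra relating set $R$ is finite, Proposition \ref{enumeration of G from presentation} yields $W(F)$ as c.e.\ from $\Delta^{\mathrm{qf}}(M)$. For the complement, mimic Theorem \ref{every ec group computes hp}: for each word $w$ in $\tuple{g}, \tuple{f}$, define
\[
\varphi_w^F(\tuple{x}, \tuple{g}) = \bland_{i<m} R_i(\tuple{g}, \tuple{x}) = 1 \;\wedge\; w(\tuple{g}, \tuple{x}) \neq 1.
\]
If $w \neq 1$ in $F$, then $\tuple{f}$ witnesses $\varphi_w^F$ in $F$, hence in the amalgamated free product $M *_G F$ (which extends $M$ and contains $F$ over the common subgroup $G$), so by existential closure of $M$, some tuple of $M$ witnesses $\varphi_w^F$. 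Conversely, any realisation $\tuple{m}$ of $\varphi_w^F$ in $M$ makes the map $\tuple{g} \mapsto \tuple{g}$, $\tuple{f} \mapsto \tuple{m}$ into a group homomorphism $F \to \langle \tuple{g}, \tuple{m} \rangle \leq M$ under which $w$ maps to a nontrivial element, so $w \notin W(F)$. Thus searching through tuples of $M$ enumerates $W(F)^c$ from $\Delta^{\mathrm{qf}}(M)$.

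The main conceptual hurdle is choosing the right ``finitely presented'' gadget in the relativised setting: the natural notion of reducibility on word problems over $G$ is enumeration reducibility rather than Turing reducibility, and the Generalised Higman's Embedding Theorem is exactly what bridges this gap, producing a finitely presented extension of $G$ whose word problem internalises the full enumeration degree of $W(G)$. Once $F$ is in hand, the rest is a faithful relativisation of the unrelativised proof, with the only additional ingredient being the amalgamation $M *_G F$ used to witness consistency of $\varphi_w^F$ in an extension of $M$.
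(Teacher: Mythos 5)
Your proposal is correct and follows essentially the same route as the paper: obtain, via Theorem \ref{every star degree contains a group} and the Generalised Higman's Embedding Theorem, a finitely presented extension $F$ of $G$ with $W(F) \geq_{\mathrm{T}} K$, and then relativise the Macintyre-style argument of Theorem \ref{every ec group computes hp} to show $\Delta^{\mathrm{qf}}(M) \geq_{\mathrm{T}} W(F)$. The only cosmetic differences are that the paper embeds $H * G$ rather than $H$ into the finitely presented extension (immaterial, since $G$ embeds in any finitely presented extension of itself by definition) and that you make explicit the amalgamated free product $M *_G F$ that the paper leaves implicit in passing from ``consistent with $G$'' to ``realised in $M$''.
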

	
	We start with a simple observation.
	
	\begin{obs}
		Let $F$ be a finitely presented extension of $G$.  Then $W(F) \equiv_{\mathrm{e}} W(G)$.  
	\end{obs}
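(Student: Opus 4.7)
The plan is to verify both directions of the equivalence separately, each following from fairly direct structural considerations about presentations and enumeration reducibility.

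For the direction $W(G) \leq_{\mathrm{e}} W(F)$, I would fix a generating tuple $\tuple{g}\tuple{f}$ of $F$ as in Definition \ref{finitely presented extension}. Then a word $w$ on $\tuple{g}$ lies in $W(G)$ if and only if it lies in $W(F, \tuple{g}\tuple{f})$, so $W(G, \tuple{g})$ is the computable restriction of $W(F, \tuple{g}\tuple{f})$ to words whose letters are all in $\tuple{g}$. This immediately gives $W(G, \tuple{g}) \leq_{\mathrm{e}} W(F, \tuple{g}\tuple{f})$. To move from this specific choice of generators to the enumeration degree of the word problem as an invariant of the group, I would invoke Proposition \ref{e-degree of wp is well-defined}, which tells us that the enumeration degree of $W(G)$ does not depend on the chosen (finite) generating set.

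For the direction $W(F) \leq_{\mathrm{e}} W(G)$, I would use Proposition \ref{enumeration of G from presentation} applied to the presentation $\langle \tuple{g}, \tuple{f} \mid W(G) \cup R(\tuple{g}, \tuple{f}) \rangle$. This tells us that $W(F, \tuple{g}\tuple{f})$ consists of all conjugates (by elements of $F$) of all cyclic permutations of the relators $W(G) \cup R$. Since $R$ is finite, an enumeration of $W(G)$ yields at once an enumeration of the relating set $W(G) \cup R$. The operations of taking cyclic permutations and forming conjugates by arbitrary words are computable operations on finite strings, so given the enumeration of the relators I can enumerate all such conjugates of cyclic permutations, producing an enumeration of $W(F)$. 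Written out in the form of Definition \ref{enumeration red: def}, this gives $W(F) \leq_{\mathrm{e}} W(G)$.

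I do not anticipate any real obstacle here; the statement is essentially an ``unpacking'' of the relativised notion of finitely presented against the structure of word problems. The only point worth being careful about is that ``finite'' truly means finite in the definition of a finitely presented extension, so that adjoining $R$ to $W(G)$ does not change the enumeration degree. Once that is noted, both reductions are routine.
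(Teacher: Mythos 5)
Your proof is correct and is exactly the argument the paper leaves implicit (the observation is stated without proof, and the surrounding text uses precisely your two reductions: restriction to words on $\tuple{g}$ for $W(G) \leq_{\mathrm{e}} W(F)$, and Proposition \ref{enumeration of G from presentation} applied to the relating set $W(G) \cup R$ with $R$ finite for the converse). The only caveat is one you inherit from the paper rather than introduce: elements of $W(F)$ are really \emph{products} of conjugates of relators, not single conjugates of cyclic permutations, but this does not affect the enumeration reduction since such products are still uniformly enumerable from an enumeration of the relating set.
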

	
	In fact, a converse also holds: any set in the enumeration degree of $W(G)$ is Turing-computed by a finitely presented extension of $G$.  
	
	\begin{prop}
		Let $G$ be a finitely generated group.  Then, for any $Y \equiv_{\mathrm{e}} W(G)$, there is a finitely presented extension $F$ of $G$ with $W(F) \geq_{\mathrm{T}} Y$.
	\end{prop}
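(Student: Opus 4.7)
The plan is to combine Ziegler's strengthened degree-realisation theorem (Theorem \ref{every star degree contains a group}) with the Generalised Higman Embedding Theorem (Theorem \ref{relativisation of higmans theorem}). The rough idea: find a finitely generated group $H$ whose word problem realises the data of $Y$ at both the enumeration and Turing levels; then embed $H$ into a finitely presented extension of $G$; then observe that the ambient finitely presented extension Turing-computes $W(H)$, and hence $Y$.

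First, I would apply Theorem \ref{every star degree contains a group} to $Y$, obtaining a finitely generated group $H$ with $W(H) \equiv_{\mathrm{T}} Y$ and $W(H) \equiv_{\mathrm{e}} Y$. Combining the second equivalence with the hypothesis $Y \equiv_{\mathrm{e}} W(G)$ gives $W(H) \equiv_{\mathrm{e}} W(G)$; in particular $W(H) \leq_{\mathrm{e}} W(G)$. Theorem \ref{relativisation of higmans theorem} then produces a finitely presented extension $F$ of $G$ into which $H$ embeds. Since $H$ is finitely generated and sits inside $F$, the word problem $W(H)$ is Turing reducible to $W(F)$: given a word in the generators of $H$, rewrite it as a word in the generators of $F$ (using a fixed finite table expressing the generators of $H$ as words in the generators of $F$) and query $W(F)$. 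Combining the two reductions gives $W(F) \geq_{\mathrm{T}} W(H) \equiv_{\mathrm{T}} Y$, which is the desired conclusion.

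I do not anticipate any real obstacle: the proof is essentially a bookkeeping composition of two previously stated results, with the small routine verification that finitely generated subgroups have Turing-reducible word problems. The only mildly delicate point is making sure that invoking Theorem \ref{every star degree contains a group} is legitimate, i.e.\ that it is stated for arbitrary $Y$, which it is. Note that the preceding observation ($W(F) \equiv_{\mathrm{e}} W(G)$ for any finitely presented extension $F$ of $G$) is what forces any such Turing upper bound to lie in the enumeration degree of $W(G)$; the present proposition shows conversely that every set in that enumeration degree can be matched by the word problem of some finitely presented extension. Together these bookend the finitely presented extensions of $G$ inside the enumeration degree of $W(G)$.
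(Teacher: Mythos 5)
Your proof is correct and follows essentially the same route as the paper: realise $Y$ as $W(H)$ via Theorem \ref{every star degree contains a group}, then apply the Generalised Higman Embedding Theorem to get a finitely presented extension of $G$ containing $H$. The only cosmetic difference is that the paper applies Higman to the free product $H * G$ rather than to $H$ alone, which is not needed since a finitely presented extension of $G$ already contains $G$ by definition.
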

	
	\begin{proof}
		Let $H$ be a finitely generated group satisfying $W(H) \equiv_{\mathrm{T}} Y$ and $W(H) \equiv_{\mathrm{e}}Y$, as provided by Theorem \ref{every star degree contains a group}.  Then, using Proposition \ref{enumeration of G from presentation}, we have that
		\[ W(H * G) \leq_{\mathrm{e}} W(G) \oplus Y \equiv_{\mathrm{e}} W(G). \]
		
		By the Generalised Higman's Embedding Theorem, Theorem \ref{relativisation of higmans theorem}, $H * G$ embeds in a finitely presented extension $F$ of $G$.  Then $W(F) \geq_{\mathrm{T}} W(G * H)  \equiv_{\mathrm{T}} W(G) \oplus W(H) \geq_{\mathrm{T}} Y$.
	\end{proof}
	
	Recalling from Proposition \ref{L is Turing maximal} that $K \equiv_{\mathrm{e}} W(G)$, we obtain the following corollary.
	
	\begin{cor}
		There is a finitely presented extension $F$ of $G$ with $W(F) \geq_{\mathrm{T}} K$.
	\end{cor}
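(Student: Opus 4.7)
The plan is to apply the preceding proposition directly, with $Y := K$. The only thing to verify is that $K$ is in the enumeration degree of $W(G)$, which is precisely the first bullet of Proposition \ref{L is Turing maximal}: $K = K_{W(G)} \equiv_{\mathrm{e}} W(G)$. Once this is noted, the hypothesis of the proposition is met, and the proposition hands us a finitely presented extension $F$ of $G$ with $W(F) \geq_{\mathrm{T}} K$, which is exactly what the corollary asserts.

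So the argument is a single invocation. There is no real obstacle, since all the work has been done: the hard content lies in the preceding proposition (which itself rests on Theorem \ref{every star degree contains a group} of Ziegler to produce a finitely generated $H$ with $W(H) \equiv_{\mathrm{e}} W(H) \equiv_{\mathrm{T}} Y$, and on the Generalised Higman Embedding Theorem, Theorem \ref{relativisation of higmans theorem}, to embed $H \ast G$ in a finitely presented extension of $G$) and in Proposition \ref{L is Turing maximal} (which identifies $K$ as the Turing-maximal representative inside the enumeration degree of $W(G)$). Once both are in place, the corollary is immediate.

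If one wanted to be slightly more self-contained and not cite the previous proposition as a black box, the plan would be to unfold it with $Y = K$: use Theorem \ref{every star degree contains a group} to pick a finitely generated $H$ with $W(H) \equiv_{\mathrm{T}} K$ and $W(H) \equiv_{\mathrm{e}} K \equiv_{\mathrm{e}} W(G)$; observe via Proposition \ref{enumeration of G from presentation} that $W(H \ast G) \leq_{\mathrm{e}} W(G) \oplus W(H) \equiv_{\mathrm{e}} W(G)$; then apply Theorem \ref{relativisation of higmans theorem} to embed $H \ast G$ into a finitely presented extension $F$ of $G$; and finally conclude that $W(F) \geq_{\mathrm{T}} W(H \ast G) \geq_{\mathrm{T}} W(H) \equiv_{\mathrm{T}} K$, as desired.
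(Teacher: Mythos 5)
Your proposal is correct and matches the paper's proof exactly: the corollary is obtained by applying the preceding proposition with $Y = K$, justified by the fact that $K = K_{W(G)} \equiv_{\mathrm{e}} W(G)$ from Proposition \ref{L is Turing maximal}. The optional unfolding you sketch is just the proof of that proposition and adds nothing that needs checking.
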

	
	\begin{proof}
		By Proposition \ref{L is Turing maximal}, $K \equiv_{\mathrm{e}} W(G)$.
	\end{proof}
	
	We now show that any existentially closed supergroup of $G$ must compute the Turing degree of any finitely presented extension of $G$, and so, in particular the Turing degree of $K$.
	
	\begin{theorem}
		Let $G$ be finitely generated, and $M$ an existentially closed group containing $G$.  Then $M \geq_{\mathrm{T}} W(F)$ for any finitely presented extension $F$ of $G$.
	\end{theorem}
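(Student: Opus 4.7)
The plan is to lift the proof of Theorem \ref{every ec group computes hp} to this relative setting. I would write the finitely presented extension as $F = \pres{\tuple{g}, \tuple{f}}{W(G) \cup R(\tuple{g}, \tuple{f})}$ with $R = \{R_0, \ldots, R_{k-1}\}$ finite, and identify the tuple $\tuple{g}$ with the corresponding elements of $M$ under the given embedding $G \leq M$. For each word $w = w(\tuple{g}, \tuple{f})$ in the generators of $F$, I would introduce the quantifier-free formula
\[ \varphi_w^F(\tuple{x}, \tuple{g}) \;:=\; \bland_{i<k} R_i(\tuple{g}, \tuple{x}) = 1 \;\wedge\; w(\tuple{g}, \tuple{x}) \neq 1, \]
viewed as having parameters $\tuple{g}$ from $M$.

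The central claim is that $M \models \exists \tuple{x}\, \varphi_w^F(\tuple{x}, \tuple{g})$ if and only if $w(\tuple{g},\tuple{f}) \neq 1$ in $F$. For the ``only if'' direction: if $w = 1$ in $F$, then $w$ is a consequence of the relators $W(G) \cup R(\tuple{g}, \tuple{f})$; since $G \leq M$, the elements $\tuple{g}$ already satisfy every relation of $W(G)$ inside $M$, so any $\tuple{m} \in M$ with $R_i(\tuple{g}, \tuple{m}) = 1$ for all $i$ generates (together with $\tuple{g}$) a quotient of $F$, which forces $w(\tuple{g}, \tuple{m}) = 1$ and refutes the inequation. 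For the converse: if $w \neq 1$ in $F$, I would form the amalgamated free product $N = F *_G M$, a supergroup of $M$ in which $\tuple{f}$ satisfies $\varphi_w^F(\tuple{x}, \tuple{g})$; existential closure of $M$ then transports this witness into $M$ itself.

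Granted this equivalence, I would deduce $W(F) \leq_{\mathrm{T}} \Delta^{\mathrm{qf}}(M)$ by a two-sided enumeration argument. First, $W(F)^c$ is $\Delta^{\mathrm{qf}}(M)$-c.e.: enumerate tuples $\tuple{m} \in M$, use the quantifier-free diagram to test each $\varphi_w^F(\tuple{m}, \tuple{g})$, and output $w$ whenever a tuple succeeds. Second, $W(F) \leq_{\mathrm{e}} W(G)$ by Proposition \ref{enumeration of G from presentation} (the relators of $F$ are $W(G)$ augmented by the finite set $R$), and $W(G) \leq_{\mathrm{T}} \Delta^{\mathrm{qf}}(M)$ by evaluating each word in $\tuple{g}$ directly inside $M$; composing, $W(F)$ is itself $\Delta^{\mathrm{qf}}(M)$-c.e. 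Since $W(F)$ and $W(F)^c$ are both $\Delta^{\mathrm{qf}}(M)$-c.e., the Turing reduction follows.

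The main obstacle is the amalgamation step in the ``if'' direction: one needs that $F *_G M$ exists as a genuine common supergroup of $F$ and $M$ over $G$, so that the inequation $w(\tuple{g}, \tuple{f}) \neq 1$ really does persist into an extension of $M$ and can be fed into existential closure. This rests on classical properties of amalgamated free products, but it is the structural ingredient that the unrelativised Theorem \ref{every ec group computes hp} does not need, because there the finitely presented group is introduced via fresh generators rather than over a fixed subgroup; once the amalgamation is in place the remainder of the proof is a routine adaptation.
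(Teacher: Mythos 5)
Your proposal is correct and follows essentially the same route as the paper: the same formulas $\varphi_w^F$, the same equivalence between $w \neq 1$ in $F$ and solvability of $\varphi_w^F$ over $\tuple{g}$, and the same two-sided enumeration argument using $W(F) \leq_{\mathrm{e}} W(G)$ together with $M \geq_{\mathrm{T}} W(G)$. The only difference is that you make explicit the amalgamated free product $F *_G M$ needed to pass from consistency over $G$ to realisability in a supergroup of $M$, a step the paper's proof leaves implicit in the phrase ``consistent with $G$ \ldots must be realised in $M$''.
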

	
	The proof is a relativisation of that of Theorem \ref{every ec group computes hp}.
	
	\begin{proof}
		Let $F = \langle \tuple{g}, \tuple{f} \mid W(G, \tuple{g}) \cup R(\tuple{g}, \tuple{f}) \rangle$ be a finitely presented extension of $G$.  For a word $w(\tuple{g}, \tuple{f})$ in $F$, define
		\[ \varphi^F_w(\tuple{g}, \tuple{x}) = \text{``}\bland R(\tuple{g}, \tuple{x}) = 1 \land w(\tuple{g}, \tuple{x}) \neq 1\text{''}. \]
		
		We first claim that $\exists \tuple{x} \varphi_w(\tuple{g}, \tuple{x})$ is consistent with $G = \langle \tuple{g} \rangle$ iff $w(\tuple{g}, \tuple{f}) \neq 1$ in $F$ .
		
		If $w(\tuple{g}, \tuple{f}) \neq 1$, then $F$ witnesses that $\exists \tuple{x} \varphi_w$ is consistent with $G$.  On the other hand, if $\exists \tuple{x} \varphi_w$ is consistent with $G$, then it is solved in some group $H \geq G$.  Let $\tuple{h}$ solve $\varphi_w$ in $H$.  By construction, $\langle \tuple{g}, \tuple{h} \rangle \leq H$ is a quotient of $F$ in which $w \neq 1$.  Thus, $w \neq 1$ in $F$.
		
		Now, let $M$ be an existentially closed supergroup of $G$.  If $\exists \tuple{x} \varphi_w$ is consistent with $G$, then it must be realised in $M$.  Thus, checking if each tuple of elements of $M$ satisfies $\varphi_w$, we get that $W(F)^c$ is $M$-c.e.  On the other hand, $M$ computes $W(G)$ and $W(F)$ is c.e.\ in $W(G)$, so $W(F)$ is $M$-c.e.  Thus, $W(F) \leq_T M$.
	\end{proof}
	
	Thus, we obtain Theorem \ref{ev ec supergroup computes A}, which states that for every existentially closed supergroup $M$ of $G$, $M \geq_{\mathrm{T}} K$.
	
	\subsection{$K$ computes an e.c.\ supergroup of $H$}
	Here we prove the second half of Theorem \ref{relativisation of existence of ec group is level of hp}.
	
	\begin{theorem}
		\label{A computes an ec supergroup}
		Let $G$ be finitely generated, and $K = K_{W(G)}$, as in the previous subsection.  Then $K$ computes an existentially closed supergroup of $G$.
	\end{theorem}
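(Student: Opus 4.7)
The plan is to relativise the construction of $M_0$ (Theorem \ref{M_0 is computable in hp}) by replacing ``computable set of relators'' with ``enumeration reducible to $W(G)$''. Concretely, I would let
\[ \cK_G = \{H : H \text{ is finitely generated and } W(H) \leq_{\mathrm{e}} W(G) \}, \]
which, by the Generalised Higman Embedding Theorem (Theorem \ref{relativisation of higmans theorem}), coincides with the class of finitely generated subgroups of finitely presented extensions of $G$. The goal is to show that $\cK_G$ is the skeleton of an existentially closed supergroup of $G$ and admits a $K$-computable presentation with $K$-EC; the relativised effective \Fraisse\ theorem (Theorem \ref{rel effective fraisses theorem}) then produces a $K$-computable existentially closed group whose skeleton is $\cK_G$, which necessarily contains $G$.

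First I would verify that $\cK_G$ is an existentially closed \Fraisse\ class (in the sense of Definition \ref{properties of skeletons}) by relativising the four checks in the proof of Theorem \ref{def of M_0}: HP follows because a finitely generated subgroup of an $H \in \cK_G$ is itself a finitely generated subgroup of the same finitely presented extension of $G$; JEP and AP are witnessed by the free product and the free product with amalgamation, both of which preserve enumeration reducibility to $W(G)$; and existential closure is witnessed by the group $\pres{\tuple{h}, \tuple{t}}{W(H, \tuple{h}) \cup \varphi^+(\tuple{t}, \tuple{a})}$. Next I would enumerate $\bK_G$ by listing all pairs $(F_j, \tuple{h}_{j,\ell})$, where $F_j = \pres{\tuple{g}, \tuple{f}_j}{W(G, \tuple{g}) \cup R_j}$ ranges over finitely presented extensions of $G$ and $\tuple{h}_{j,\ell}$ ranges over finite tuples of words in the generators of $F_j$; the represented group is $A_{j,\ell} := \langle \tuple{h}_{j,\ell} \rangle$. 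Proposition \ref{enumeration of G from presentation} gives a uniform enumeration reduction $W(F_j) \leq_{\mathrm{e}} W(G)$, which by Proposition \ref{L is Turing maximal} yields a uniform Turing reduction $W(F_j) \leq_{\mathrm{T}} K$, so the quantifier-free diagrams of the $A_{j,\ell}$ are uniformly $K$-computable.

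Verifying $K$-EC for $\bK_G$ should follow the direct construction used for $M_0$. Given an index $(j,\ell)$ and a quantifier-free $\varphi(\tuple{x}, \tuple{a})$ with parameters from $A_{j,\ell}$, I would re-express $\tuple{a}$ in the generators $\tuple{g}, \tuple{f}_j$ of $F_j$ and form
\[ F_{j'} = \pres{\tuple{g}, \tuple{f}_j, \tuple{t}}{W(G, \tuple{g}) \cup R_j \cup \varphi^+(\tuple{t}, \tuple{a})}, \]
which is again a finitely presented extension of $G$ whose index $j'$ can be found computably from the presentation data; take $\tuple{h}_{j',\ell'}$ to be the concatenation of $\tuple{h}_{j,\ell}$ and $\tuple{t}$. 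If $\varphi$ is consistent over $A_{j,\ell}$ in some supergroup $N$, then amalgamating $N$ with $F_j$ over $A_{j,\ell}$ produces a supergroup of $F_j$ solving $\varphi$, and the inequations-preservation argument from the proof of Theorem \ref{def of M_0} shows simultaneously that $F_j$ embeds in $F_{j'}$ and that $\tuple{t}$ solves $\varphi$ in $F_{j'}$; if $\varphi$ is inconsistent, then $F_{j'}$ will fail to solve $\varphi$. Testing whether $\tuple{t}$ solves $\varphi$ in $F_{j'}$ reduces to checking that each inequation in $\varphi^-(\tuple{t}, \tuple{a})$ is actually an inequation in $F_{j'}$, which is semi-decidable in $K$ via the uniformly $K$-computable diagram. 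I expect the main obstacle to be bookkeeping this uniformity: the proof hinges on promoting the uniform enumeration reduction $W(F_j) \leq_{\mathrm{e}} W(G)$ into a uniform Turing reduction to $K$, which is precisely where the Turing-maximality of $K$ inside the $e$-degree of $W(G)$ does its work.
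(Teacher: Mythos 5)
Your route is genuinely different from the paper's. The paper never shows that the limit of $\cK_G = \{H : W(H) \leq_{\mathrm{e}} W(G)\}$ is $K$-computable; it only establishes the weaker bound that the limit of $\cK_H$ is $W(H)'$-computable (Lemma \ref{M_H is W(H)' computable}), and then reaches $K$ indirectly: Soskov's enumeration jump inversion (Theorem \ref{enumeration jump inversion}) produces a total $X \geq_{\mathrm{e}} W(G)$ with $X' \equiv_{\mathrm{T}} K$, Ziegler's Theorem \ref{every star degree contains a group} realises $X$ as a word problem $W(H)$, and the $W(H)'$-computable group $M_H$ (whose skeleton $\cK_H \supseteq \cK_G$ is in general strictly larger than $\cK_G$) is the witness. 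Your proposal instead attacks $\cK_G$ head-on, and its genuinely new ingredient --- that the uniform enumeration reduction $W(F_j) \leq_{\mathrm{e}} W(G)$ for presentations $\pres{\tuple{g}, \tuple{f}_j}{W(G) \cup R_j}$ realises each $W(F_j)$ as a column of $K$, hence makes the diagrams uniformly $K$-\emph{decidable} rather than merely c.e.\ in $W(G)$ --- is sound and is exactly what the paper's "c.e.\ in $W(H)$, hence $\leq_{\mathrm{T}} W(H)'$" bound fails to exploit. One small repair: you cannot afford to restrict the list to genuine finitely presented extensions of $G$, since detecting whether $\tuple{g}$ generates a copy of $G$ in $F_j$ is a $\Sigma^0_1(K)$ question; you must keep every presentation of that shape and observe that the collapsed ones still lie in $\cK_G$, which is harmless.

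The step that needs real attention is the divergence clause of $K$-EC. You assert that testing whether $\tuple{t}$ solves $\varphi$ in $F_{j'}$ reduces to checking the finitely many inequations of $\varphi^{-}(\tuple{t}, \tuple{a})$, and that if $\varphi$ is inconsistent then $F_{j'}$ fails this test. That is false as stated: $\varphi$ can be inconsistent over $A_{j,\ell}$ because adjoining $\varphi^{+}(\tuple{t}, \tuple{a})$ \emph{collapses} $A_{j,\ell}$ while every inequation of $\varphi^{-}$ survives in $F_{j'}$ (e.g.\ $A = \langle a \mid a^2 \rangle$ and $\varphi(x;a) = ``x^3 = 1 \wedge x = a\text{''}$, where $F_{j'}$ is trivial, the empty set of inequations holds vacuously, and $A$ does not embed). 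In that situation your procedure halts and returns a structure into which $A_{j,\ell}$ does not embed, whereas the definition of $\Tdega$-EP requires it to diverge and to return an embedding when it halts. Certifying the missing condition "$F_j \into F_{j'}$" is a $\Pi^{0}_1(K)$ question (all words outside $W(F_j)$ stay outside $W(F_{j'})$), so it is not clear your EP function can be implemented with oracle $K$ at all; this is the one place where the direct route is more delicate than the paper's, and you should either show that consistency of $\varphi$ over $A_{j,\ell}$ is $\Sigma^0_1(K)$ or find another way to meet the divergence requirement. (To be fair, the paper's own proofs of Theorem \ref{M_0 is computable in hp} and Lemma \ref{M_H is W(H)' computable} are silent on the same point, so this is not a defect you introduced --- but your strengthened claim leans on it harder, and it may well be why the paper takes the detour through jump inversion rather than claiming $K$-computability of the limit of $\cK_G$ itself.)
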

	
	We will prove this in several parts.  We first argue that Theorem \ref{M_0 is computable in hp}, which gives a $0'$-computable existentially closed group, relativises.
	
	\begin{theorem}
		\label{jump of group computes ec supergroup}
		Let $H$ be a finitely generated group.  Then there is an existentially closed supergroup $M_H$ of $H$ that is $W(H)'$-computable.
	\end{theorem}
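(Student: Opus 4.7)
My plan is to relativize the proof of Theorem \ref{M_0 is computable in hp} by substituting $W(H)$ for the trivial oracle throughout. Define $\cK_H$ to be the class of finitely generated groups $F$ admitting a presentation $\langle \tuple{g} \mid R \rangle$ with $R$ c.e.\ in $W(H)$. By the relativized Craig's trick (Remark \ref{Craig}) this coincides with the finitely generated groups having a $W(H)$-computable set of relators, and by the Generalised Higman's Embedding Theorem (Theorem \ref{relativisation of higmans theorem}) also with the finitely generated subgroups of finitely presented extensions of $H$. In particular $H = \langle \tuple{h} \mid W(H, \tuple{h}) \rangle$ lies in $\cK_H$.

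I would then verify that $\cK_H$ satisfies HP, JEP, AP, and existential closure by relativizing the arguments in the proof of Theorem \ref{def of M_0}. A finitely generated subgroup of $F \in \cK_H$ has $W(H)$-c.e.\ word problem by restriction of $W(F)$ (via Proposition \ref{enumeration of G from presentation}), giving HP. Free products and free products with amalgamation of members of $\cK_H$ concatenate $W(H)$-c.e.\ relator sets into $W(H)$-c.e.\ relator sets, giving JEP and AP. For existential closure, given $F \in \cK_H$ with presentation $\langle \tuple{g} \mid R \rangle$ and a consistent quantifier-free $\varphi(\tuple{x}, \tuple{a})$, the group $\langle \tuple{g}, \tuple{t} \mid R \cup \varphi^+(\tuple{t}, \tuple{a}) \rangle$ lies in $\cK_H$, extends $F$, and realizes $\varphi$ by the same inequation-preservation argument.

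Next I would produce a $W(H)'$-computable representation $\mathbb{K}_H$ of $\cK_H$ with $W(H)'$-EC. Enumerate presentations $A_i = \langle \tuple{a}_i \mid R_i \rangle$ where the $\tuple{a}_i$ are initial segments of $\omega$ and the $R_i$ range over $W(H)$-c.e.\ sets of words via the standard $W(H)$-computable indexing. By Proposition \ref{enumeration of G from presentation} each $W(A_i, \tuple{a}_i)$ is $W(H)$-c.e., so the quantifier-free diagrams are uniformly $W(H)'$-computable. For the EC step, given $i$ and consistent quantifier-free $\varphi(\tuple{x}, \tuple{a}_i)$, the set $S = R_i \cup \varphi^+(\tuple{t}, \tuple{a}_i)$ is $W(H)$-c.e.\ with an index $j$ obtainable uniformly in $i$ and $\varphi$ by combining the $W(H)$-c.e.\ enumeration of $R_i$ with the finite set $\varphi^+$; the verification that $A_j$ extends $A_i$ and realizes $\varphi$ is the same quotient argument as in Theorem \ref{M_0 is computable in hp}.

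Applying the relativized effective \Fraisse's theorem (Theorem \ref{rel effective fraisses theorem}) to $\mathbb{K}_H$ then yields a $W(H)'$-computable $\omega$-homogeneous \Fraisse\ limit $M_H$ with $\Sk(M_H) = \cK_H$. By Theorem \ref{characterisation of skeleton of ec group} this $M_H$ is existentially closed, and since $H \in \cK_H$, $M_H$ contains a copy of $H$. I do not anticipate a substantive obstacle here: the task is to check that Craig's trick, Proposition \ref{enumeration of G from presentation}, and each closure argument in Theorem \ref{def of M_0} relativize verbatim, which they do because each statement is a purely enumeration-degree-level fact about the relator sets.
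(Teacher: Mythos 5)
Your proof is essentially correct, but it takes a genuinely different route from the paper's, and it is worth seeing how. The paper's proof (Lemmas \ref{ec fraisse class from enum red} and \ref{M_H is W(H)' computable}) works with the class $\cK_H = \{F : W(F) \leq_{\mathrm{e}} W(H)\}$ and leans on the Generalised Higman Embedding Theorem to represent this class as the finitely generated subgroups of finitely presented extensions of $H$ (after a $W(H)'$-computable filtering step to discard presentations in which $H$ fails to embed). You instead work with the class of groups having a $W(H)$-computable (equivalently $W(H)$-c.e.) set of relators and verify the \Fraisse{} and effectivity conditions directly by relativising Theorems \ref{def of M_0} and \ref{M_0 is computable in hp}; this avoids the relativised Higman theorem entirely and is exactly the alternative the paper itself sketches in the remark following Lemma \ref{M_H is W(H)' computable}, where your class is called $\cK_H^{\mathrm{T}}$. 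Your route is more elementary and fully suffices for the stated theorem, since all you need is that $H$ lies in the class and that the class is an effectively existentially closed \Fraisse{} class. What the paper's route buys is a limit group whose skeleton is characterised purely by enumeration reducibility, which is what the subsequent argument for Theorem \ref{A computes an ec supergroup} invokes when it concludes ``$W(G) \leq_{\mathrm{e}} X$, so $G \in \cK_H$.'' Your index-computation for the EC step (computing a $W(H)$-c.e.\ index for $R_i \cup \varphi^+$ directly rather than searching the list for it) is if anything cleaner than the paper's search.

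One claim in your first paragraph is false and you should delete it: the class of groups with $W(H)$-computable relator sets does \emph{not} coincide with the finitely generated subgroups of finitely presented extensions of $H$. The Generalised Higman theorem characterises the latter as the $G$ with $W(G) \leq_{\mathrm{e}} W(H)$, and the paper's remark gives an explicit separation (take $W(H) \equiv_{\mathrm{T}} \equiv_{\mathrm{e}} \emptyset'$ and $W(F) \equiv_{\mathrm{T}} \equiv_{\mathrm{e}} \overline{\emptyset'}$: then $F$ has a $W(H)$-computable relator set but $W(F) \nleq_{\mathrm{e}} W(H)$). The error is harmless here because nothing in your argument depends on it --- membership of $H$ in your class follows directly from $W(H,\tuple{h}) \leq_{\mathrm{T}} W(H)$ --- but it conflates Turing- and enumeration-level data in exactly the place where this paper is at pains to keep them apart.
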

	
	The construction of $M_H$ is analogous to the construction of $M_0$ in the proof of Theorem \ref{def of M_0} --- indeed, $M_0 = M_{\{e\}}$ ---  where we use the notion of finitely presented extensions instead of finitely presented groups.  We sketch it below.
	
	The following result, noted by Ziegler in \cite[Definition III.4.3]{Ziegler}, gives the skeleton of the desired existentially closed group $M_H$.  We will then show that $M_H$ can be built $W(H)'$-computably using the effective \Fraisse's Theorem (Theorem \ref{rel effective fraisses theorem}).
	
	\begin{lemma}
		\label{ec fraisse class from enum red}
		Let $H$ be a finitely generated group.  Then there is an existentially closed group $M_H$ such that
		\[\Sk(H) = \{ F : W(F) \leq_{\mathrm{e}} W(H) \}.\] 
	\end{lemma}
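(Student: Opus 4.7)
The plan is to run the proof of Theorem \ref{def of M_0} with Turing computability replaced by enumeration reducibility to $W(H)$, verifying that $\cK_H := \{F : W(F) \leq_{\mathrm{e}} W(H)\}$ (closed under isomorphism) is an existentially closed \Fraisse\ class in the sense of Definition \ref{properties of skeletons}, and then invoking Theorem \ref{characterisation of skeleton of ec group}. Note that $H \in \cK_H$, so the resulting limit $M_H$ will automatically be a supergroup of $H$.

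First I would verify \textbf{HP}: if $F = \pres{\tuple{f}}{R} \in \cK_H$ and $F' = \langle \tuple{f'}\rangle \leq F$, then writing each element of $\tuple{f'}$ as a word in $\tuple{f}$ and sieving from an enumeration of $W(F,\tuple{f})$ those words that use only the new generators gives $W(F',\tuple{f'}) \leq_{\mathrm{e}} W(F) \leq_{\mathrm{e}} W(H)$; by Proposition \ref{e-degree of wp is well-defined} this is enough. For \textbf{JEP} and \textbf{AP}, inspecting the presentations in Definition \ref{defn: group constructions}, the word problem of $F_1 * F_2$ (respectively $F_1 *_E F_2$ for $E \in \cK_H$ embedded in both) is enumerable from $W(F_1) \oplus W(F_2)$ using the standard normal form theorem for (amalgamated) free products: a word is trivial iff it reduces to $1$ under finitely many applications of relators from either factor, together with (for amalgamation) the identifications coming from $E$, which are themselves enumerable from $W(E) \leq_{\mathrm{e}} W(F_1)$. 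Hence $W(F_1 *_{(E)} F_2) \leq_{\mathrm{e}} W(H)$.

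For \textbf{existential closure}, I would mimic the last paragraph of the proof of Theorem \ref{def of M_0}: given $F \in \cK_H$ with generators $\tuple{f}$ and a quantifier-free $\varphi(\tuple{x},\tuple{a})$ with $\tuple{a} \subseteq F$ that is solved in some supergroup, form
\[F' = \langle \tuple{f}, \tuple{t} \mid W(F,\tuple{f}) \cup \varphi^+(\tuple{t},\tuple{a}) \rangle.\]
The argument from Theorem \ref{def of M_0} shows verbatim that $F \leq F'$ and that $F' \models \varphi(\tuple{t},\tuple{a})$ (the inequations of $\varphi^-$ survive because any consequence of $W(F) \cup \varphi^+$ already holds in the ambient solving group). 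The key point is that $W(F')$ can be enumerated from $W(F) \leq_{\mathrm{e}} W(H)$ using Proposition \ref{enumeration of G from presentation}, so $F' \in \cK_H$.

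Having established HP, JEP, AP, and existential closure for $\cK_H$, Theorem \ref{characterisation of skeleton of ec group} produces an existentially closed group $M_H$ with $\Sk(M_H) = \cK_H$, and since $H \in \cK_H$ this $M_H$ is a supergroup of $H$. The main obstacle is keeping the enumeration-reducibility bookkeeping tight for the group-theoretic constructions — in particular verifying that the normal-form descriptions of words in free and amalgamated products really do yield enumeration reductions (not merely Turing reductions) to the word problems of the factors; this is where the shift from Turing to enumeration reducibility, which was invisible in the unrelativised case treated in Theorem \ref{def of M_0}, has to be handled carefully, and it is precisely the content of the Generalised Higman Embedding Theorem (Theorem \ref{relativisation of higmans theorem}) that enumeration reducibility is the correct algebraic invariant here.
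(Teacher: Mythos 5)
Your proposal is correct and follows essentially the same route as the paper: the paper's proof likewise reduces the lemma to checking that $\{F : W(F) \leq_{\mathrm{e}} W(H)\}$ is an existentially closed \Fraisse\ class via Theorem \ref{characterisation of skeleton of ec group}, with the key observation that the word problem of a group is enumeration-reducible to any set of relators for it (Proposition \ref{enumeration of G from presentation}), and then repeats the argument of Theorem \ref{def of M_0}. (Your closing remark slightly misattributes the bookkeeping to the Generalised Higman Embedding Theorem --- what is actually needed is only the elementary direction, namely Proposition \ref{enumeration of G from presentation} --- but this does not affect the proof.)
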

	
	\begin{proof}[Proof Sketch]
		By Theorem \ref{characterisation of skeleton of ec group}, it suffices to show that $\{ F : W(F) \leq_{\mathrm{e}} W(H) \}$ is an existentially closed \Fraisse\ class (see Definition \ref{properties of skeletons}).  The key observation is that, by Proposition \ref{enumeration of G from presentation}, $W(G) \leq_{\mathrm{e}} R$ for any set of relators $R$ in a presentation of $G$.  Then the lemma follows essentially identically to Theorem \ref{def of M_0}.
	\end{proof}
	
	\begin{lemma}
		\label{M_H is W(H)' computable}
		$M_H$ is $W(H)'$-computable.
	\end{lemma}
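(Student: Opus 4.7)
The plan is to build $M_H$ by invoking the relativised effective \Fraisse\ theorem (Theorem \ref{rel effective fraisses theorem}) with oracle $W(H)'$ on the skeleton
\[\cK_H=\{F\vcentcolon W(F)\leq_{\mathrm{e}} W(H)\}\]
identified in Lemma \ref{ec fraisse class from enum red}. That lemma already supplies HP, JEP, AP, and existential closure, so everything reduces to producing a $W(H)'$-computable representation of $\cK_H$ with $W(H)'$-EC. This is the direct relativisation of the proof of Theorem \ref{M_0 is computable in hp}, with Ziegler's enumeration operators $\Gamma_k$ replacing the usual c.e.\ indexing.

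First I would fix a listing $\seq{\Gamma_k}{k<\omega}$ of the enumeration operators and enumerate the presentations
\[ A_{(\tuple{a},k)}=\pres{\tuple{a}}{\Gamma_k(W(H))}, \]
where $\tuple{a}$ ranges over initial segments of $\omega$ and $k$ over $\omega$. Every $F\in\cK_H$ is isomorphic to $\pres{\tuple{a}}{W(F)}$ for some $\tuple{a}$, and by definition $W(F)=\Gamma_k(W(H))$ for some $k$, so the list covers $\cK_H$ up to isomorphism. For each entry, Proposition \ref{enumeration of G from presentation} makes $W(A_{(\tuple{a},k)})$ c.e.\ in $\Gamma_k(W(H))$ and hence c.e.\ in $W(H)$, uniformly in $(\tuple{a},k)$; passing to $W(H)'$ therefore gives uniformly computable quantifier-free diagrams. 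This is the required $W(H)'$-computable representation.

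Next I would verify $W(H)'$-EC. Given an index $i=(\tuple{a}_i,k_i)$ and a quantifier-free $\varphi(\tuple{x},\tuple{a})$ with parameters from $A_i$, first rewrite $\tuple{a}$ in terms of $\tuple{a}_i$ so that the parameters are exactly $\tuple{a}_i$. Choose fresh constants $\tuple{t}$ (the first natural numbers outside $\tuple{a}_i$) and form
\[ A_j = \pres{\tuple{a}_i,\tuple{t}}{\Gamma_{k_i}(W(H))\cup\varphi^+(\tuple{t},\tuple{a}_i)}. \]
Adding a fixed finite set of relators to an enumeration operator's image is a uniform operation, so there is an index $k'$, computable from $k_i$ and $\varphi$, with $\Gamma_{k'}(W(H))=\Gamma_{k_i}(W(H))\cup\varphi^+(\tuple{t},\tuple{a}_i)$; this gives $j=(\tuple{a}_i\tuple{t},k')$ directly, without search. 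Exactly as in the proof of Theorem \ref{M_0 is computable in hp}, if $\varphi$ is consistent with $A_i$, then $A_i\leq A_j$ and $\tuple{t}$ solves $\varphi$ in $A_j$. To make the procedure diverge on inconsistent input, I would additionally verify using the $W(H)'$-computable diagram of $A_j$ that no word of $\varphi^-(\tuple{t},\tuple{a}_i)$ lies in $W(A_j)$; this is a finite conjunction of $\Pi^0_1(W(H))$ questions and so $W(H)'$-decidable. Return $(j,\tuple{t})$ when the check passes, diverge otherwise.

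The main obstacle is the bookkeeping around enumeration operators: one has to confirm that appending finitely many fixed words to $\Gamma_{k}(W(H))$ is computably uniform in $(k,\text{finite set})$, and that this preserves the indexing used in our listing. This is standard but needs care. Everything else is a straightforward relativisation of the unrelativised construction, so once this uniformity is in hand, Theorem \ref{rel effective fraisses theorem} delivers the desired $W(H)'$-computable $\Fraisse$ limit, which is $M_H$ by the uniqueness part of \Fraisse's theorem together with Lemma \ref{ec fraisse class from enum red}.
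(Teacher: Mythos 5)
Your proof is correct in substance but takes a genuinely different route from the paper's. The paper builds its $W(H)'$-computable representation of $\cK_H$ by enumerating candidate \emph{finitely presented extensions} $\pres{\tuple{h},\tuple{f}}{S,R}$ of $H$ (with $S$ finite), using $W(H)'$ to prune those in which $\tuple{h}$ fails to generate a copy of $H$, and then listing all finitely generated subgroups of the survivors; that this list is exactly $\cK_H$ is precisely the content of the Generalised Higman Embedding Theorem (Theorem \ref{relativisation of higmans theorem}). You instead list presentations $\pres{\tuple{a}}{\Gamma_k(W(H))}$ directly, which covers $\cK_H$ by the definition of $\leq_{\mathrm{e}}$ and lands inside $\cK_H$ by Proposition \ref{enumeration of G from presentation}; this bypasses relativised Higman entirely and also avoids the pruning and subgroup-enumeration steps. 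Your approach is essentially the alternative the paper itself sketches in the remark following the lemma (there phrased via a relativised Craig's trick and $W(H)$-computable relating sets rather than enumeration operators), so it is a legitimate and arguably more self-contained argument; the price is the uniformity bookkeeping for the operators $\Gamma_k$, which you correctly identify and which is standard. What the paper's route buys is that the extension step is \emph{literally} the one from Theorem \ref{M_0 is computable in hp} (adjoin a finite set of relators over the base presentation), whereas you must additionally justify that appending a finite set to $\Gamma_{k_i}(W(H))$ is realised by a computable index transformation.

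One caveat on your divergence check: verifying that no word of $\varphi^-(\tuple{t},\tuple{a}_i)$ lies in $W(A_j)$ rules out collapse of the new generators, but not collapse of $A_i$ itself inside $A_j$. If $\varphi$ is inconsistent over $A_i$ only because $\varphi^+$ forces a new identification among the $\tuple{a}_i$ (e.g.\ $A_i=\langle a\rangle\cong\mathbb{Z}$ and $\varphi$ contains ``$a^2=1$'' together with an inequation that survives in the quotient), your procedure halts and returns a group into which $A_i$ does not embed. Detecting this failure is prima facie a $\Sigma^0_2(W(H))$ question, so you should either also check (using the $W(H)'$-decidable diagrams of $A_i$ and $A_j$) each instance ``$w\notin W(A_i)$ but $w\in W(A_j)$'' as part of a dovetailed search that diverges when a counterexample appears, or note that the construction only needs correct behaviour on consistent inputs. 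The paper's own proof elides the same point, so this is a shared imprecision rather than a defect specific to your argument.
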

	
	\begin{proof}
		The proof is largely a relativisation of Theorem \ref{M_0 is computable in hp}.  Setting
		\[ \cK_H = \{ F : W(F) \leq_{\mathrm{e}} W(H)  \}, \]
		we show that $\cK_H$ has a $W(H)'$-computable representation with $W(H)'$-EC.  (The definition of $\Tdega$-EC can be found in Definition \ref{def of rel fraisse props}.)
		
		Fix a presentation $\pres{\tuple{h}}{R}$ of $H$, where $\tuple{h}$ is an initial segment of $\omega$.  Produce a computable enumeration of presentations $\pres{\tuple{h}, \tuple{f}}{S, R}$ with generating set an initial segment of the natural numbers, and $S$ is a finite of words on $X$ and $\tuple{f}$.  (Note that this is not the same as enumerating the finitely presented extensions of $H$, as $H$ in general will not embed in these groups.)  Now, using $W(H)'$, we can compute the full quantifier-free diagram of each group given by a presentation in this list.  Call the resulting enumeration $\seq{F_i, \tuple{f}_i}{i < \omega}$.
		
		Moreover, for each $i$, denote by $\alpha_i$ the map $H \to F_i$ obtained by sending the generators of $H$ to their image in $F_i$.  We ask $W(H)'$ whether there is a word $w$ and $s \in \omega$ such that $w(\tuple{h}) \neq 1$ in $H$ and $w(f_i(\tuple{h}))$ is the $s$th consequence of the relations of $F_i$.  If the answer is yes, then the elements $\tuple{h}$ in $F_i$ do not generate a copy of $H$ in $F_i$, so $F_i$ is not a finitely presented extension of $H$, and we delete it from the list.
		
		Let $\seq{D^j_n}{n < \omega}$ be an effective list of all the finite sequences of words in the letters $\tuple{f_j}$ where, by convention, $D^j_0 = \{\tuple{f_j}\}$.  Write $F^j_n$ for the subgroup generated by $D^j_n$ in $F_j$, so $F^j_0 = F_j$ and $F^j_n$ embeds in $F_j$ for every $j$ and $n$.  Then $\bK_H = \seq{F^j_n, D^j_n}{\langle j, n \rangle < \omega}$ enumerates all finitely generated groups that embed in a finitely presented extension of $H$.  By the relativisation of Higman's Embedding Theorem, this is equivalent to $\cK_H$.  Moerover, $\bK_H$ is a $W(H)'$-computable enumeration such that the quantifier-free diagrams of the groups are uniformly $\cK_H$-computable.
		
		Thus, it remains to check $W(H)'$-EC, but this follows identically to the argument in Theorem \ref{M_0 is computable in hp}.	
	\end{proof}
	
	\begin{rem}
		This lemma can also be proved without the relativised Higman Embedding Theorem by checking that 
		\[ \cK_H^{\mathrm{T}} = \{ F : W(F)\ \text{has a $W(H)$-computable set of relators} \} \]
		satisfies the conditions of the effective \Fraisse\ Theorem, relativised to $0'$.
		
		In fact, it turns out that $\cK^{\mathrm{T}}_H \supsetneq \cK_H$ in general.  A relativisation of Craig's trick (Remark \ref{Craig}) implies that if $F$ has a presentation which is $\leq_{\mathrm{e}} X$, then it has an $X$-computable one.  
		
		On the other hand, we do not get equality as we can take $H$ a group that is $\mathrm{T}$- and $\mathrm{e}$-equivalent to $\emptyset'$ and $F$ a group that is $\mathrm{T}$- and $\mathrm{e}$-equivalent to $\overline{\emptyset'}$.  Then $H, F \in \cK^{\mathrm{T}}_H$, but only $H \in \cK^{\mathrm{T}}_H$.  
	\end{rem}
	
	We can now obtain the desired $K$-computable existentially closed supergroup of $G$.  
	
	\begin{theorem}
		Let $G$ be finitely generated.  Then $K$ computes an existentially closed supergroup of $G$.
	\end{theorem}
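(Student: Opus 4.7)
The plan is to produce the desired supergroup in the form $M_H$ for a cleverly chosen finitely generated $H$, rather than trying to compute $M_G$ itself. The point is that $M_H$ of Lemma \ref{ec fraisse class from enum red} contains $G$ whenever $W(G)\leq_{\mathrm{e}} W(H)$, while Lemma \ref{M_H is W(H)' computable} presents $M_H$ in $W(H)'$; so we need $H$ satisfying both $W(H)\geq_{\mathrm{e}} W(G)$ and $W(H)'\equiv_{\mathrm{T}} K$. The bridge between these enumeration and Turing conditions will be supplied by enumeration-jump inversion.

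First I would apply Theorem \ref{enumeration jump inversion} to $Y = W(G)$ to obtain a total set $X \geq_{\mathrm{e}} W(G)$ with $J_e(X)\equiv_{\mathrm{e}} J_e(W(G))$. Since $J_e(Z)$ is total for every $Z$, enumeration equivalence between the two jumps collapses to Turing equivalence. On the right-hand side, $J_e(W(G)) = K\oplus K^c \equiv_{\mathrm{T}} K$; on the left, $X$ is total, so Proposition \ref{jumps of total sets a T equivalent} gives $J_e(X)\equiv_{\mathrm{T}} X'$. Combining these yields $X' \equiv_{\mathrm{T}} K$, which is the crucial calibration.

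Next, invoke Theorem \ref{every star degree contains a group} with this $X$ to obtain a finitely generated group $H$ with $W(H)\equiv_{\mathrm{T}} X$ and $W(H)\equiv_{\mathrm{e}} X$. From $W(H)\equiv_{\mathrm{e}} X \geq_{\mathrm{e}} W(G)$ it follows that $G$ lies in the skeleton $\{F : W(F)\leq_{\mathrm{e}} W(H)\}$ of $M_H$, so $M_H$ is an existentially closed supergroup of $G$. From $W(H)\equiv_{\mathrm{T}} X$ we get $W(H)'\equiv_{\mathrm{T}} X' \equiv_{\mathrm{T}} K$. Then Lemma \ref{M_H is W(H)' computable} gives a $W(H)'$-computable presentation of $M_H$, which is a $K$-computable presentation, as required.

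The only delicate step is the first one: we need an extension of $W(G)$ in the enumeration degrees whose Turing jump is controlled at the level of $K$, and it is not a priori obvious that $W(G)$ itself (which need not be total) has such a jump. The enumeration-jump inversion theorem is precisely what supplies a total approximation with the correct jump, and once it is in hand the remaining steps — applying Ziegler's realisation result and the relativised effective construction of $M_H$ — are routine.
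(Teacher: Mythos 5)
Your proposal is correct and follows essentially the same route as the paper's own proof: enumeration-jump inversion applied to $W(G)$ to get a total $X$ with $X'\equiv_{\mathrm{T}}K$, Ziegler's realisation theorem to get $H$ with $W(H)\equiv_{\mathrm{e}}W(H)\equiv_{\mathrm{T}}X$ (in both senses), and then the $W(H)'$-computable construction of $M_H$, with $G\in\cK_H$ because $W(G)\leq_{\mathrm{e}}X$. No substantive differences.
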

	
	\begin{proof}
		Let $W = W(G)$.  Apply Theorem \ref{enumeration jump inversion} to $W$, which gives a total $X \geq_e W$ with $J_e(X) \equiv_{\mathrm{e}} J_e(W)$.  Since both of these sets are total, we get $J_e(X) \equiv_{\mathrm{T}} J_e(W)$.  Moreover, since $X$ is total, $J_e(X) \equiv_{\mathrm{T}} X'$ by Proposition \ref{jumps of total sets a T equivalent} and $J_e(W) \equiv_{\mathrm{T}} K$ from the definition of $K$.  Thus, $X' \equiv_{\mathrm{T}} K$.
		
		By Theorem \ref{every star degree contains a group}, there is a finitely generated $H$ with $W(H) \equiv_{\mathrm{e}} X$ and $W(H) \equiv_{\mathrm{T}} X$.  By Theorem \ref{jump of group computes ec supergroup}, we can obtain an existentially closed group that is $W(H)'$-computable, and hence $X'$-computable and $K$-computable.
		
		We just need to check that $G$ embeds in $H$.  But $W(G) \leq_{\mathrm{e}} X$, so $G \in \cK_H$, as required. 
	\end{proof}
	
	This completes the proof of the relativisation of Theorem \ref{A computes an ec supergroup}, and hence we have shown that $K$ is the optimal degree of an existentially closed supergroup of $G$.
	
	\subsection{Degree spectrum of existentially closed groups}
	As an application of the work above, we characterise the Turing degrees which are degrees of existentially closed groups.  
	
	Recall that, for a model $M$, the degree of $\Delta^{\mathrm{qf}}(M)$ (or, for simplicity, the degree of $M$) is the minimal Turing degree, if one exists, that computes the constants, functions, and relations of $M$, relative to some enumeration.  The following definition captures all such minimal degrees for existentially closed groups $M$.
	
	\begin{defn}
		Let 
		\[\operatorname{DegSpec}(ECGroup) = \{ \Tdega : \deg(\Delta^{\mathrm{qf}}(M)) = \Tdega \text{ for some countable e.c.\ group } M \}\] 
		be the \emph{degree spectrum} of existentially closed groups. 
	\end{defn} 
	
	We can characterise such degrees.
	
	\begin{theorem}
		\label{degree spectrum of ec groups}
		The degree spectrum of existentially closed groups is given by $$\operatorname{DegSpec}(ECGroup) = \{\Tdega : \Tdega \geq_{\mathrm{T}}0'\}.$$
	\end{theorem}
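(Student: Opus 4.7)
The plan is to prove the two inclusions separately: the $(\subseteq)$ direction is immediate from what has been established, while $(\supseteq)$ requires combining the relativised existence theorem with Friedberg's jump inversion.

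For $(\subseteq)$: if $\Tdega$ is the degree of a countable existentially closed group $M$, then $M$ is $\Tdega$-computably presentable, and Theorem \ref{every ec group computes hp} tells us that every presentation of an existentially closed group computes $0'$; hence $\Tdega \geq_{\mathrm{T}} 0'$.

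For $(\supseteq)$: fix $\Tdega \geq_{\mathrm{T}} 0'$. My goal is to produce an existentially closed group of degree exactly $\Tdega$. First I would invoke Friedberg's jump inversion theorem to choose a set $X$ with $X' \equiv_{\mathrm{T}} \Tdega$; replacing $X$ by $X \oplus X^c$ if necessary, I may assume $X$ is total, so that Proposition \ref{jumps of total sets a T equivalent} gives $K_X \equiv_{\mathrm{T}} X' \equiv_{\mathrm{T}} \Tdega$. Next I would apply Theorem \ref{every star degree contains a group} to find a finitely generated group $G$ with $W(G) \equiv_{\mathrm{e}} X$. Since the Turing degree of $K_Y$ is an invariant of the enumeration degree of $Y$ (by Proposition \ref{L is Turing maximal} it is the maximum Turing degree inside that enumeration degree), this yields $K_{W(G)} \equiv_{\mathrm{T}} K_X \equiv_{\mathrm{T}} \Tdega$. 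Finally, Theorem \ref{relativisation of existence of ec group is level of hp} applied to $G$ provides an existentially closed supergroup $M$ of $G$ of degree exactly $K_{W(G)} \equiv_{\mathrm{T}} \Tdega$, as required.

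The main obstacle is conceptual rather than technical: the relativised existence theorem is naturally phrased in terms of $K_{W(G)}$, which is an invariant of the enumeration degree of $W(G)$, whereas the target is a prescribed Turing degree. Friedberg inversion is precisely the tool that bridges the two, by realising any Turing degree above $0'$ as the Turing jump of some total set $X$ and hence, through Proposition \ref{jumps of total sets a T equivalent} and Theorem \ref{every star degree contains a group}, as the Turing degree of $K_{W(G)}$ for a suitable finitely generated $G$.
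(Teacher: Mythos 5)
Your proposal is correct and follows essentially the same route as the paper: both directions use Theorem \ref{existence of ec groups is at level of hp} for the inclusion $\subseteq$, and for $\supseteq$ both invert the jump to get a total set $B$ with $B' \equiv_{\mathrm{T}} \Tdega$, apply Proposition \ref{jumps of total sets a T equivalent} to identify $K_B$ with $\Tdega$, realise $B$ as a word problem via Theorem \ref{every star degree contains a group}, and conclude with Theorem \ref{relativisation of existence of ec group is level of hp}. Your explicit remark that $K_Y$ is a Turing-degree invariant of the enumeration degree of $Y$ is a useful detail the paper leaves implicit.
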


	\begin{proof}
		By Theorem \ref{existence of ec groups is at level of hp}, we already have 
		\[\mathrm{DegSpec}(ECGroup) \subseteq \{\Tdega : \Tdega \geq_{\mathrm{T}}0'\}.\]  
		
		On the other hand, let $\Tdega \geq_{\mathrm{T}} 0'$, and let $\Tdegb$ be such that $\Tdegb' \equiv_{\mathrm{T}} \Tdega$.  Let $A \in \Tdega$ and $B \in \Tdegb$ be total.  Applying Proposition \ref{jumps of total sets a T equivalent}, we get that $J_e(B) \equiv_{\mathrm{T}} K_B \equiv_1 B' \equiv_{\mathrm{T}} \Tdega$.  
		
		Thus, for every $\Tdega \geq 0'$, $\Tdega \equiv_{\mathrm{T}} K_B$ for some $B$.  Taking a group $G$ such that $W(G) \equiv_{\mathrm{e}} B$ and $W(G) \equiv_{\mathrm{T}} B$ and applying Theorem \ref{relativisation of existence of ec group is level of hp} we get the result.
	\end{proof}
	
	\section{Existentially closed groups with ``uncomplicated'' subgroups}
	In this section, we delve deeper into our earlier result that the existence of existentially closed groups is at the level of the halting problem (Theorem \ref{existence of ec groups is at level of hp}).  We show that this complexity arises only from having to determine which systems of equations and inequations to realise, and not from having to embed ``complicated'' finitely generated subgroups.  
	
	The key idea is that, once we know an existential formula $\exists \tuple{x} \varphi(\tuple{x}, \tuple{g})$ needs to be realised (using $0'$), actually building a finitely generated group that realises it can be done by finding an element in some \emph{$\Pi^0_1$-class} (See Definition \ref{def: pi01 class}).  
	
	This provides a new, algebraic characterisation of an important class of Turing degrees called the \emph{PA degrees} (Theorem \ref{char of PA degrees}).  The PA degrees will be defined in the following subsection.
	
	\subsection{Background on Scott sets}
	\label{section: background on scott sets}
	A key tool in our proof will be the notion of a \emph{Scott set}, introduced by Dana Scott in \cite{Scott}.  Before we prove the main result of this section, we collect relevant results on Scott sets.
	
	\begin{defn}
		\label{def: pi01 class}
		Let $X \subseteq \omega$.  A \emph{$\Pi^{0,X}_1$-class} $\cC$ is a set of the form
		\[ \cC  = \{ \sigma \in 2^{\omega} : \forall n R(\sigma \upharpoonright n) \} \]
		where $R$ is an $X$-computable relation.
	\end{defn}
	
	\begin{defn}
		A \emph{Scott set} is a collection $\cS$ of subsets of $\omega$ satisfying all of the following:
		\begin{itemize}
			\item If $X \in \cS$ and $Y \leq_{\mathrm{T}}X$, then $Y \in \cS$.
			\item If $X$, $Y \in \cS$, then $X \oplus Y \in \cS$.
			\item If $\mathcal{C}$ is a nonempty $\Pi^{0,X}_1$ for some $X \in \cS$, then $\cS$ contains an element of $\cC$. \qedhere
		\end{itemize}
	\end{defn}
	
	Scott sets were introduced to study models of Peano arithmetic.  They are intimately connected with the computability-theoretic notion of \emph{PA degrees}.
	
	\begin{defn}
		\label{PA deg-def}
		A \emph{PA degree} is a Turing degree that computes a complete extension of Peano arithmetic.
	\end{defn}
	
	There are many equivalent formulations of PA degrees.  The following will be most relevant for our discussion.
	
	\begin{prop}
		For a Turing degree $\Tdega$, the following are equivalent:
		\begin{enumerate}
			\item $\Tdega$ is a PA degree.
			\item $\Tdega$ computes an element in every nonempty $\Pi^0_1$-class.
		\end{enumerate}
	\end{prop}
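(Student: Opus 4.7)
The plan is to prove the two implications separately, using standard computability-theoretic techniques. For $(1) \Rightarrow (2)$, I would assume $\Tdega$ computes a complete consistent extension $T^*$ of Peano arithmetic, and fix a nonempty $\Pi^0_1$ class $\mathcal{C} = [T]$ given by a computable binary tree $T$. The idea is to build a path through $T$ bit by bit, using $T^*$ as a guide. At stage $n$, with $\sigma_n \in 2^n$ in hand, consider for each $i \in \{0,1\}$ the PA-sentence $\psi_{\sigma_n \cdot i}$ asserting that $\sigma_n \cdot i$ fails to have extensions of arbitrary length in $T$. Extend by $0$ unless $T^*$ proves $\psi_{\sigma_n \cdot 0}$, in which case extend by $1$.

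The key invariant is that $T^*$ does not prove $\psi_{\sigma_n}$; this is preserved because provability for both children would combine into a proof of $\psi_{\sigma_n}$, and it holds initially because $\psi_{\varepsilon}$ is false by nonemptiness of $\mathcal{C}$ and so unprovable by the consistency of $T^*$. The invariant also forces each $\sigma_n$ to lie in $T$ with extensions of every length, because the failure statement is $\Sigma^0_1$ and thus would be provable in PA (hence in $T^*$) if true. Therefore $\bigcup_n \sigma_n$ is a $\Tdega$-computable path in $\mathcal{C}$.

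For $(2) \Rightarrow (1)$, the set of complete consistent extensions of PA forms a nonempty $\Pi^0_1$ class in $2^{\omega}$. Fix an effective enumeration $(\varphi_n)$ of sentences of PA; the class is the set of $f \in 2^{\omega}$ such that for every $k$, there is no proof of contradiction of length $\leq k$ from $\mathrm{PA} \cup \{\varphi_n : f(n) = 1, n < k\} \cup \{\neg\varphi_n : f(n) = 0, n < k\}$, which is a computable matrix condition. Nonemptiness follows from Gödel's completeness theorem. By hypothesis $\Tdega$ computes some such $f$, which decodes to a complete extension of PA.

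The main subtlety sits in $(1) \Rightarrow (2)$: one cannot simply apply König's lemma inside PA. The trick is that non-extendibility of a finite string in a computable tree is a $\Sigma^0_1$ statement, so PA itself decides any genuine failure correctly; where PA is silent (both children might in principle extend infinitely), the completeness of $T^*$ forces a choice that remains consistent with the invariant, which suffices to navigate the tree along a true infinite branch.
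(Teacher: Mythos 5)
Your direction $(2)\Rightarrow(1)$ is fine: the completions of PA form a nonempty $\Pi^0_1$ class in the way you describe, and any member of it decodes to a complete consistent extension computable from it. The gap is in $(1)\Rightarrow(2)$, at the base case of your invariant. You assert that $\psi_\varepsilon$ (``the tree is finite'') is unprovable in $T^*$ because it is false and $T^*$ is consistent. Consistency only rules out proving a sentence whose negation is also provable; it does not rule out proving a \emph{false} $\Sigma^0_1$ sentence. What you are implicitly invoking is $\Sigma^0_1$-\emph{soundness} of $T^*$, and an arbitrary complete consistent extension of PA --- which is all a PA degree is guaranteed to compute --- need not be $\Sigma^0_1$-sound. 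Concretely, ``$T$ is infinite'' is a true $\Pi^0_1$ sentence that PA need not prove (take $T$ to be the tree that dies at level $n$ as soon as a PA-proof of $0=1$ of length $\le n$ appears, so that ``$T$ is infinite'' is equivalent to $\mathrm{Con}(\mathrm{PA})$); hence there are complete consistent $T^*\supseteq \mathrm{PA}$ containing $\psi_\varepsilon$. For such a $T^*$ your invariant fails at stage $0$, $T^*$ then proves $\psi_\sigma$ for every $\sigma$, your rule always extends by $1$, and the output need not be a path. Your inductive step, by contrast, is sound: it uses only $\Sigma^0_1$-completeness of PA (every true $\Sigma^0_1$ sentence is in $T^*$) and the fact that $T^*$ cannot contain two sentences that PA proves jointly contradictory --- and those are exactly the two properties one is permitted to use.

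The standard repair routes the tree navigation through those two properties alone. From $T^*$ one computes a $\{0,1\}$-valued diagonally nonrecursive function, e.g.\ $f(e)=0$ iff the $\Sigma^0_1$ sentence ``$\Phi_e(e)\!\downarrow\,=1$'' lies in $T^*$: if $\Phi_e(e)$ converges to $i\in\{0,1\}$, the true outcome is PA-provable and PA proves the two outcome sentences incompatible, so $f(e)\ne \Phi_e(e)$. One then shows that any such $f$ computes a member of every nonempty $\Pi^0_1$ class, using the recursion theorem to produce, for each node $\sigma$ with infinite subtree, an index $e_\sigma$ such that $\Phi_{e_\sigma}(e_\sigma)$ outputs the first child of $\sigma$ whose subtree is seen to die out; following $f(e_{\sigma})$ then always selects a child with infinite subtree. (Equivalently, one can go through separating sets for an effectively inseparable pair of c.e.\ sets.) This extra machinery is not optional: it is precisely what substitutes for the $\Sigma^0_1$-soundness your base case silently assumes.
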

	
	The key result connecting PA degrees and Scott sets is the following, in which we use $\Phi^X_e$ to denote the $e$th partial $X$-computable function with image in $\{0,1\}$.  If $\Phi^X_e$ is total, we identify it with the set for which it is the characteristic function.  The second part is due to \cite{Macintyre-Marker}.
	
	\begin{theorem}
		\label{Macintyre--Marker}
		Every Scott set contains a PA degree.  
		
		On the other hand, every Scott set $\cS$ is \emph{effectively enumerated} by a PA degree $\Tdega$; i.e., there is an $\Tdega$-computable enumeration $\seq{X_i}{i < \omega}$ and $\Tdega$-computable functions $\alpha$, $\beta$, and $\gamma$ such that: 
		\begin{itemize}
			\item If $\Phi^{X_i}_e = Y$, then $Y = X_{\alpha(i, e)}$
			\item If $X_i \oplus X_j = X_{\beta(i, j)}$
			\item If $\mathcal{C}$ is a nonempty $\Pi^{0, X_i}_1$ class, then $X_{\gamma(i)}$ is an element of $\mathcal{C}$.
		\end{itemize}
	\end{theorem}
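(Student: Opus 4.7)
For Part 1, I would exhibit the class $\mathcal{P} \subseteq 2^\omega$ of characteristic functions of complete consistent extensions of Peano arithmetic as a $\Pi^0_1$-class. Specifically, rejecting a binary string $\sigma$ of length $n$ whenever $\sigma$ assigns $0$ to a PA-provable sentence of G\"odel number below $n$, or is propositionally inconsistent below $n$, is a computable test. Since $\cS$ is nonempty and closed downward under $\leq_{\mathrm{T}}$, we have $\emptyset \in \cS$, so $\mathcal{P}$ is $\Pi^{0,\emptyset}_1$, and the $\Pi^0_1$-class axiom for Scott sets supplies an element of $\mathcal{P}$ lying in $\cS$; this element is, by definition, of PA degree.

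For Part 2, I would invoke Scott's theorem: every countable Scott set $\cS$ arises as the standard system $\mathrm{SSy}(M) = \{X_m : m \in M\}$ of some countable $M \models \mathrm{PA}$, where $X_m = \{n \in \omega : M \models p_n \mid m\}$ and $p_n$ is the $n$th prime. Fix such an $M$ with $\mathrm{SSy}(M) = \cS$, identify the domain of $M$ with $\omega$, and let $T_0$ be the elementary diagram of $M$ in the language of PA expanded by a constant $c_m$ for each element $m$. Then $T_0$ is a complete consistent extension of PA, so $\Tdega \vcentcolon= \deg(T_0)$ is a PA degree, and the enumeration $\seq{X_m}{m<\omega}$ is uniformly $T_0$-computable via the equivalence $X_m(n) = 1$ iff $T_0 \vdash p_n \mid c_m$.

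For the witness functions, the central observation is that $\cS = \mathrm{SSy}(M)$ is closed under each Scott-set operation, so for any input there exists an element of $M$ coding the required output. Using the completeness and decidability of $T_0$, I would search uniformly through candidate elements of $M$, confirming success by checking a single first-order sentence against $T_0$. For example, given indices $i, j$, I would define $\beta(i, j)$ to be the least $k$ for which $T_0$ proves the natural first-order statement that the divisibility pattern of $c_k$ realises $X_{m_i} \oplus X_{m_j}$; the search terminates because the join lies in $\mathrm{SSy}(M)$. Analogous uniform formulas handle $\alpha$ (via the arithmetical definition of the $e$th Turing functional relative to $X_{m_i}$) and $\gamma$ (via the arithmetical definability of $\Pi^{0,X_{m_i}}_1$-classes and their members).

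The main obstacle I anticipate is ensuring the first-order witnessing sentences for the Scott-set operations actually have witnesses in $M$ that are detectable by a uniform $T_0$-decidable check: it is not a priori clear that the element of $M$ coding the external outcome of the operation is the one found by the \emph{natural} coding formula applied inside $M$. Making this precise requires choosing the coding of operations so that $M$'s internal perspective agrees with the external operation on the standard part of the model, which is a standard feature of suitable models of PA with prescribed standard system but requires verification for each of $\alpha$, $\beta$, and $\gamma$.
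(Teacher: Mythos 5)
The paper does not prove this statement; it is quoted as background, with the second part attributed to Macintyre--Marker. So there is no in-paper argument to compare against, and I will assess your proposal on its own terms. Your Part 1 is the standard argument and is complete: the complete consistent extensions of PA form a nonempty $\Pi^0_1$ class (with the caveat that your consistency test should be propositional consistency \emph{together with} membership of all PA-provable sentences, so that paths are genuinely deductively consistent, as you essentially indicate), $\emptyset$ belongs to every Scott set by downward closure, and the basis axiom for Scott sets hands you a member of PA degree. Your Part 2 also follows the classical route --- Scott's theorem realising $\cS$ as $\mathrm{SSy}(M)$ for a countable $M \models \mathrm{PA}$, with $\Tdega$ the degree of the elementary diagram $T_0$ --- and the enumeration $\seq{X_m}{m \in M}$ being uniformly $T_0$-computable is correct.

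The gap is exactly the one you flag, and as written your definition of $\beta$ does not work: the ``natural first-order statement that the divisibility pattern of $c_k$ realises $X_{m_i} \oplus X_{m_j}$'', read as an unbounded universal sentence over all $n$, may have \emph{no} witness in $M$ (the element built inside $M$ as a product of primes below some bound only codes the join up to that bound), so your search need not terminate. The standard repair is to fix, once and for all, a single nonstandard element $b_0 \in M$ (a finite piece of non-uniform information, which is permissible), and to search for the least $k$ such that $T_0$ proves the \emph{bounded} coding sentence $\forall n < c_{b_0}\,\bigl((p_{2n} \mid c_k \leftrightarrow p_n \mid c_{m_i}) \land (p_{2n+1} \mid c_k \leftrightarrow p_n \mid c_{m_j})\bigr)$. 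A witness exists in $M$ by $\Sigma_1$-induction (the relevant product is bounded by $(p_{2b_0})^{b_0}$), and any witness is correct on all standard $n$ precisely because $b_0$ is nonstandard. The same device handles $\alpha$, while $\gamma$ needs two further standard ingredients you should supply: absoluteness of halting computations ($\Sigma_1$-completeness) to see that the internal and external versions of the $\Pi^{0,X_{m_i}}_1$ tree agree on standard strings, and overspill to find a node of the internal tree at a nonstandard level, whose coded set is then an external path lying in $\mathrm{SSy}(M)$. Without these steps the three bullet points of the theorem are asserted rather than proved.
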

	
	In other words, for every Scott set $\cS$, there is a PA degree $\Tdega$ computing every element of $\cS$ and also the functions witnessing that $\cS$ is a Scott set.
	
	\subsection{Existentially closed groups from Scott sets}
	Given a Scott set $\cS$, we now construct an existentially closed group $M_{\cS}$ whose finitely generated subgroups ``span'' $\cS$.  This will give a number of corollaries, including a new characterisation of the PA degrees.
	
	\begin{theorem}
		\label{every scott set gives an ec group, original}
		Let $\cS$ be a Scott set.  Then there is an existentially closed group $M_{\cS}$ such that the Turing degrees in $\Sk(M)$ are exactly the Turing degrees of $\cS$.
	\end{theorem}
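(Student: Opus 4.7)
The plan is to apply Theorem \ref{characterisation of skeleton of ec group} to the class
\[ \cK_{\cS} := \{ G \text{ finitely generated} : W(G) \in \cS \}. \]
If I can verify that $\cK_{\cS}$ satisfies HP, JEP, AP, and existential closure, then its \Fraisse\ limit $M_{\cS}$ is existentially closed with $\Sk(M_{\cS}) = \cK_{\cS}$, and the degree matching is automatic: by construction every finitely generated subgroup of $M_{\cS}$ has word problem in $\cS$, while conversely Proposition \ref{every turing degree contains a group} produces, for each Turing degree $\Tdega \in \cS$, a finitely generated $G$ with $W(G) \equiv_{\mathrm{T}} \Tdega$, which then lies in $\cK_{\cS}$.

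Verifying HP, JEP, and AP is routine using closure of $\cS$ under Turing reducibility and joins. For HP, if $H \leq G \in \cK_{\cS}$ is finitely generated, then writing the generators of $H$ as words in those of $G$ gives $W(H) \leq_{\mathrm{T}} W(G) \in \cS$. For JEP and AP, the free product and amalgamated free product constructions from Definition \ref{defn: group constructions} yield groups whose word problems are Turing-computable from the joins of the input word problems, which remain in $\cS$.

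The main work is existential closure, and this is where the $\Pi^0_1$-class axiom of a Scott set (rather than mere closure under Turing reducibility) becomes essential. Given $G \in \cK_{\cS}$ and a quantifier-free $\varphi(\tuple{x}, \tuple{g})$ solvable in some supergroup of $G$, the candidate $F_0 = \pres{\tuple{g}, \tuple{h}}{W(G, \tuple{g}) \cup \varphi^+(\tuple{h}, \tuple{g})}$ from the proof of Theorem \ref{def of M_0} is a supergroup of $G$ solving $\varphi$, but $W(F_0)$ is only c.e.\ in $W(G)$ and so need not lie in $\cS$. Instead, encoding subsets of the free group $F_{\tuple{g}, \tuple{h}}$ as elements of $2^{\omega}$ via a fixed enumeration of words, consider the class $\cC$ of all normal subgroups $N \trianglelefteq F_{\tuple{g}, \tuple{h}}$ satisfying: (i) $N \supseteq W(G, \tuple{g}) \cup \varphi^+(\tuple{h}, \tuple{g})$; (ii) for every word $w(\tuple{g}) \notin W(G, \tuple{g})$, $w(\tuple{g}) \notin N$ (which ensures $G$ embeds in $F_{\tuple{g}, \tuple{h}}/N$); and (iii) $N$ is disjoint from $\varphi^-(\tuple{h}, \tuple{g})$. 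Each condition defines a closed set relative to the oracle $W(G)$, using that $W(G)^c \leq_{\mathrm{T}} W(G)$ for (ii), and the kernel of $F_{\tuple{g}, \tuple{h}} \onto F_0$ witnesses that $\cC$ is nonempty. Since $W(G) \in \cS$, the Scott property produces some $N \in \cC \cap \cS$, and then $F_N := F_{\tuple{g}, \tuple{h}}/N$ is a supergroup of $G$ in $\cK_{\cS}$ solving $\varphi$, since $W(F_N) = N \in \cS$.

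The main obstacle is bookkeeping: verifying that the normal-subgroup axioms and conditions (i)--(iii) genuinely translate to closed conditions on characteristic functions with the correct oracle, and that any $N \in \cC$ really does produce a quotient solving $\varphi$ over $G$ (which mirrors the inequation-preservation argument in the proof of Theorem \ref{def of M_0}). Once this is in place, the theorem follows from Theorem \ref{characterisation of skeleton of ec group}.
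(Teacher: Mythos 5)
Your proposal is correct and follows essentially the same route as the paper: the same class $\cK_{\cS}$ of finitely generated groups with word problem in $\cS$, the same routine verification of HP, JEP, and AP via joins, and the same use of the $\Pi^{0,W(G)}_1$-class axiom of Scott sets to witness existential closure. The only difference is cosmetic --- the paper encodes candidate solutions as finitely-consistent quantifier-free diagrams on $\tuple{g},\tuple{h}$ rather than as normal subgroups of the free group, and your explicit conditions (i)--(iii), together with the appeal to Proposition \ref{every turing degree contains a group} for the ``exactly'' direction, if anything make the argument slightly more complete.
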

	
	\begin{rem}
		We have already seen that the converse does not hold: there is an existentially closed group the Turing degrees of whose skeleton do not form a Scott set.  Consider $M_0$, the existentially closed group whose skeleton consisted of all finitely generated, computably presentable groups.  The set of Turing degrees of its skeleton contains a maximal element; namely any computably presentable group with a $0'$-computable word problem.  However, Scott sets never have maximal degrees.
		
		This raises a natural question: can one computability-theoretically characterise the subsets of the Turing degrees which arise as the skeletons of existentially closed groups?  We know, for example, that such subsets are not necessarily downwards-closed --- as the c.e.\ degrees comprise the skeleton of $M_0$ --- but that it is closed under joins, by the JEP.
	\end{rem}

	\begin{proof}[Proof of Theorem \ref{every scott set gives an ec group, original}]
		Let $\cK_{\cS} = \{ G : \text{a finitely generated group and } W(G) \in \cS \}$.  By Theorem \ref{characterisation of skeleton of ec group}, it suffices to show that $\cK_{\cS}$ is an existentially closed $\Fraisse$ class with HP, JEP, and AP.
		
		It is clear that $\cK_{\cS}$ satisfies HP, JEP, and AP since taking finitely generated subgroups, free products, and free products with amalgamation respectively produce groups whose word problems are no greater than the joins of the original groups.  Thus, it suffices to show that it is existentially closed.  Let $G \in \cK_{\cS}$, let $\varphi(\tuple{x}, \tuple{g})$ be a quantifier-free formula with parameters from $G$ that is satisfied in some supergroup of $G$.
		
		By Proposition \ref{no disjunctions for ec}, we may assume that $\langle \tuple{g} \rangle$ generates $G$ and that $\varphi$ is equivalent to ``$ \bland_{i \in I} w_i(\tuple{x}, \tuple{g}) = 1 \land \bland_{j \in J} w_j(\tuple{x}, \tuple{g}) \neq 1$'' for some finite sets $I$ and $J$.  
		
		Let $\tuple{h}$ be a tuple of new constants of length $|\tuple{x}|$.  Fix an ordering on the words in $\tuple{g}, \tuple{h}$ and identify $\sigma \in 2^{\omega}$ with the theory 
		\[ \hat{T} = \{ (w_n(\tuple{g}, \tuple{h}) = 1)^{\sigma(n)} \} \]
		where $\psi^0 = \neg \psi$ and $\psi^1 = \psi$.  Say a finite theory is \emph{$n$-consistent} if there is no proof of a contradiction from it of length $< n$.  Define the $\Pi^{0, W(G)}_1$ class
		\[ \mathcal{C} = \{ \hat{T} :  \forall n\ \forall t \leq n \left(  \hat{T} \upharpoonright t\ \text{is $n$-consistent} \right) \}. \]
		
		The elements of $\mathcal{C}$ correspond to quantifier-free diagrams of groups generated by $\tuple{g}$ and $\tuple{h}$ which solve $\varphi(\tuple{x}, \tuple{g})$.  Since $\varphi(\tuple{x}, \tuple{g})$ was assumed to be solvable over $G$, it follows that $\mathcal{C} \neq \emptyset$.  Thus, by definition of a Scott set, there is some $\hat{T} \in \cS$ which is also in $\mathcal{C}$.   Hence, $\cS$ contains the quantifier-free diagram of a group $H$ generated by $\tuple{g}, \tuple{h}$ which contains $G$ and solves $\varphi$.  This finishes the proof.
	\end{proof}
	
	From the above result and the equivalence between Scott sets and PA degrees, we obtain the following theorem highlighting the relationship between skeletons of existentially closed groups and PA degrees.
	
	\MainTwo
	
	\begin{proof}
		Let $\Tdega$ be a PA degree.  Then by Theorem \ref{Macintyre--Marker}, $\Tdega$ computes a Scott set $\cS$.  By Theorem \ref{every scott set gives an ec group, original}, there is an existentially closed group $M_{\cS}$ such that every finitely generated $G \leq M_{\cS}$ has $W(G) \in \cS$ and hence $W(G) \leq_{\mathrm{T}}\Tdega$.
	\end{proof}
	
	From this, we obtain a new characterisation of the PA degrees.
	
	\MainThree
	
	\begin{proof}
		Let $\Tdega$ be a PA degree.  Then Theorem \ref{every scott set gives an ec group} gives an existentially closed group $M_{\Tdega}$ such that every finitely generated subgroup of $M_{\Tdega}$ is computable in $\Tdega$.  
		
		On the other hand, suppose $\Tdega$ is a Turing degree and $M$ is an existentially closed group such that $\Tdega$ computes the word problem of every finitely generated subgroup of $M$.  Following the proof of \cite[Theorem II.2.11]{Ziegler}, we see that $M$ has a finitely generated subgroup whose word problem has PA degree.  Since PA degrees are upwards closed, $\Tdega$ must be a PA degree.
	\end{proof}
	
	\begin{rem}
		Theorem \ref{char of PA degrees} and Theorem \ref{relativisation of existence of ec group is level of hp} hint at the reverse mathematical strength of building existentially closed groups.  In particular, we conjecture that the existence of an existentially closed \Fraisse\ class of groups is equivalent to $\WKL$ over $\RCA$ and the existence of an existentially closed supergroup of a group is equivalent to $\ACA$ over $\RCA$.  
		
		However, as suggested by the machinery needed to prove Theorem \ref{relativisation of existence of ec group is level of hp}, a number of interesting subtleties arise, which the author hopes to address in a later paper.
	\end{rem}
	
	\begin{rem}
		In \cite{Miller}, Miller constructs a finitely presented group such that every nontrivial quotient (i.e., quotient that is not equal to the trivial group) has an unsolvable word problem.  He notes that the computably presented quotients have degree $0'$, but that, in general, the quotients may not even have c.e.\ word problem.  
		
		Our result shows that Miller's theorem cannot be improved to guarantee that every nontrivial quotient is in a cone above some fixed PA degree.  Suppose there were such a finitely presented group $F = \pres{\tuple{f}}{R_0, \ldots, R_n}$ such that every nontrivial quotient is strictly above some PA degree $\Tdega$.  Then, since $F$ is finitely presented, every existentially closed group contains a nontrivial quotient of $F$.  But no such quotient embeds in $M_{\Tdega}$, a contradiction.
		
		In fact, using the existentially closed group $M_{\Tdega}$ from Corollary \ref{ec group with every fp subgroup computable} below, a similar argument shows that there is no finitely presented $F$ such that every nontrivial quotient of $F$ has c.e.\ degree.
	\end{rem}

	\subsection{Applications}
	In this subsection, we explore some applications of the equivalence between PA degrees and skeletons of existentially closed groups developed above.  
	
	Our first application highlights that the degree of the finitely generated groups in the skeleton may in general be ``quite far'' from the degree of the whole group.
	
	\begin{cor}
		There is an existentially closed group all of whose finitely generated subgroups have low degree --- and in particular, all have degree strictly below $0'$.
	\end{cor}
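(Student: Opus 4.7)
The plan is to invoke the Low Basis Theorem of Jockusch--Soare, which produces a low member of any nonempty $\Pi^0_1$ class. Since the set of complete consistent extensions of Peano arithmetic is a nonempty $\Pi^0_1$ class, we obtain a low PA degree $\Tdega$; that is, $\Tdega$ is a PA degree with $\Tdega' \equiv_{\mathrm{T}} 0'$.

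Having $\Tdega$ in hand, I would apply Theorem \ref{every scott set gives an ec group} directly to obtain an existentially closed group $M_{\Tdega}$ such that $W(G) \leq_{\mathrm{T}} \Tdega$ for every $G \in \Sk(M_{\Tdega})$. It remains only to observe that $\Tdega$ sits strictly below $0'$ and that every degree $\leq_{\mathrm{T}} \Tdega$ inherits lowness. For the former, $\Tdega \leq_{\mathrm{T}} \Tdega' \equiv_{\mathrm{T}} 0'$, while $\Tdega = 0'$ would give $\Tdega' \equiv_{\mathrm{T}} 0'' >_{\mathrm{T}} 0'$, contradicting lowness. For the latter, if $\Tdegb \leq_{\mathrm{T}} \Tdega$, then $\Tdegb' \leq_{\mathrm{T}} \Tdega' \equiv_{\mathrm{T}} 0'$, so $\Tdegb' \equiv_{\mathrm{T}} 0'$ and $\Tdegb$ is low.

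There is really no obstacle here beyond citing the Low Basis Theorem; the work was done in Theorem \ref{every scott set gives an ec group}. The only mild subtlety is to notice that the strict inequality $\Tdega <_{\mathrm{T}} 0'$ is forced by the lowness of $\Tdega$ together with the non-computability of any PA degree, so both conclusions of the corollary follow simultaneously.
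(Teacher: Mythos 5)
Your proposal is correct and follows the same route as the paper, which simply cites the existence of a low PA degree and then applies Theorem \ref{every scott set gives an ec group}; your additional remarks on the downward closure of lowness and the strictness of $\Tdega <_{\mathrm{T}} 0'$ are accurate elaborations of details the paper leaves implicit.
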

	
	\begin{proof}
		There is a low PA degree (see \cite[Chapter VIII.2]{Simpson}).
	\end{proof}
	
	Moreover, we can use other properties of PA degrees to get results about the structure of the class of existentially closed groups.
	
	\begin{cor}
		\label{ec group with every fp subgroup computable}
		There is an existentially closed group $M$ such that each finitely presentable subgroup of $M$ has a solvable word problem.
	\end{cor}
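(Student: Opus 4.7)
The plan is to combine Theorem \ref{every scott set gives an ec group} with the hyperimmune-free basis theorem in order to suppress nontrivial c.e.\ sets among the finitely generated subgroups. The key elementary observation is that finite presentability automatically gives a \emph{c.e.} word problem: by Proposition \ref{enumeration of G from presentation}, if $F$ is finitely presented with finite relating set $R$, then $W(F)$ is the closure of $R$ under conjugation and cyclic permutation, hence c.e. So to show $W(F)$ is solvable, I just need $W(F)$ to also be co-c.e., which will follow from a carefully chosen degree bound.

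The computability-theoretic input I would use is the standard fact that if $\Tdega$ is hyperimmune-free and $A \leq_{\mathrm{T}} \Tdega$ is c.e., then $A$ is computable. Indeed, given a c.e.\ enumeration $\seq{A_s}{s < \omega}$, define $f \from \omega \to \omega$ by $f(n) = \mu s\,(n \in A_s)$ if $n \in A$ and $f(n) = 0$ otherwise; this $f$ is $\Tdega$-computable (decide $n \in A$ using $\Tdega$, then search). Hyperimmune-freeness provides a computable $g$ dominating $f$, and then $n \in A \iff n \in A_{g(n)}$, so $A$ is computable. A PA degree that is also hyperimmune-free exists by applying the Jockusch--Soare hyperimmune-free basis theorem to the (nonempty) $\Pi^0_1$ class of complete consistent extensions of Peano arithmetic; any member of this class computes such an extension and is hyperimmune-free by construction.

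Now fix such a hyperimmune-free PA degree $\Tdega$ and apply Theorem \ref{every scott set gives an ec group} to obtain an existentially closed group $M = M_{\Tdega}$ with $W(G) \leq_{\mathrm{T}} \Tdega$ for every $G \in \Sk(M)$. For any finitely presentable subgroup $F \leq M$, the word problem $W(F)$ is c.e.\ by the first paragraph and $\leq_{\mathrm{T}} \Tdega$ by the choice of $M$, so $W(F)$ is computable by the second paragraph. This is exactly the conclusion of the corollary.

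There is no real obstacle beyond correctly locating the input theorems: the heavy lifting — constructing existentially closed groups whose skeleton degrees live in a prescribed Scott set — has already been done, and the only thing one must verify carefully is that finite presentability forces c.e.-ness of the word problem (which is immediate from Proposition \ref{enumeration of G from presentation}). The mildly subtle ingredient is recognising that one needs the PA degree to be hyperimmune-free, rather than just being a PA degree; this is what rules out non-computable c.e.\ sets appearing as word problems of finitely presented subgroups.
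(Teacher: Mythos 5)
Your proposal is correct and matches the paper's proof: both take a hyperimmune-free PA degree $\Tdega$, form the existentially closed group whose finitely generated subgroups all have word problem computable in $\Tdega$, and observe that finitely presented subgroups have c.e.\ word problems, which must then be computable since no noncomputable c.e.\ set lies below a hyperimmune-free degree. The only difference is cosmetic — you spell out the domination argument that the paper cites as a known fact.
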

	
	\begin{proof}
		There is a hyperimmune-free PA degree $\Tdega$ (see, for example, \cite[Chapters 5 and 9]{Soare}). Let $M_{\Tdega}$ be the existentially closed group from Theorem \ref{char of PA degrees}.  Because hyperimmune-free degrees are downwards closed and do not contain nonzero c.e.\ sets, it follows that the only groups a with c.e.\ word problem that embed in $M_{\Tdega}$ have a solvable word problem.  But the word problem of every finitely presented group can be computably enumerated, and so has c.e.\ degree.
	\end{proof}		
	
	Our last corollary is Theorem 4.1.4 from \cite{Hodges}.  Recall the following definition from the introduction.
	
	\begin{defn}
		Let $T$ be a theory.  A complete, quantifier-free type $\Phi(\overline{x})$ is \emph{$\exists_1$-isolated} (with respect to $T$) if there is an $\exists_1$-formula $\varphi(\overline{x})$ such that $\Phi$ is the unique quantifier-free type consistent with $\varphi$ over $T$.
	\end{defn}
	
	\begin{cor}
		\label{two relatively atomic ec groups below PAs}
		There are two existentially closed groups $M$ and $N$ such that if $G$ is a finitely generated group embedding into both, then $G$ has solvable word problem; equivalently, by Theorem \ref{macintyre-neumann-rips}, $G$ is $\exists_1$-isolated.
	\end{cor}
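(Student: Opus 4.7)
The plan is to use Theorem \ref{char of PA degrees} together with the existence of a minimal pair of PA degrees. Concretely, recall that for every PA degree $\Tdega$ we have constructed (via Theorem \ref{every scott set gives an ec group}) an existentially closed group $M_{\Tdega}$ all of whose finitely generated subgroups $G$ satisfy $W(G) \leq_{\mathrm{T}} \Tdega$. So if we can find two PA degrees $\Tdega$ and $\Tdegb$ forming a minimal pair, i.e.\ with $\Tdega \wedge \Tdegb = \mathbf{0}$ in the Turing degrees, then setting $M = M_{\Tdega}$ and $N = M_{\Tdegb}$ does the job: any finitely generated $G$ embedding into both belongs to $\Sk(M) \cap \Sk(N)$, so $W(G) \leq_{\mathrm{T}} \Tdega$ and $W(G) \leq_{\mathrm{T}} \Tdegb$, whence $W(G)$ is computable, i.e.\ $G$ has solvable word problem. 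The ``equivalently'' clause is then immediate from Theorem \ref{macintyre-neumann-rips}.

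So the only content to be supplied is the existence of a minimal pair of PA degrees. This is a classical fact about $\Pi^0_1$ classes: the completions of Peano arithmetic form a nonempty $\Pi^0_1$ class with no computable member, and for any such class one can carry out a standard minimal pair construction inside the class. More precisely, one builds two paths $A$ and $B$ through the tree of completions of PA by a finite-extension argument that, at each stage $e$, meets the requirement
\[ R_e : \Phi_e^A = \Phi_e^B = f \text{ total} \;\Longrightarrow\; f \text{ is computable}, \]
using the usual Lachlan--Yates style forcing: one of $A,B$ can always be extended (while staying in the $\Pi^0_1$ class, since the class has no isolated paths once we remove countably many, or alternatively by using that the class is perfect) so as to either make $\Phi_e$ partial on it or force agreement on a computable function. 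This yields $A, B$ of PA degree with $\deg(A) \wedge \deg(B) = \mathbf{0}$.

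The main obstacle is really just citing the minimal pair fact cleanly; the algebraic half of the corollary is a one-line consequence of Theorem \ref{every scott set gives an ec group}. I would simply invoke the standard result that any perfect $\Pi^0_1$ class contains a minimal pair (this is due to Jockusch--Soare in the folklore of basis theorems for $\Pi^0_1$ classes), applied to the class of completions of PA, rather than reproduce the construction.
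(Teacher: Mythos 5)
Your proposal is correct and follows essentially the same route as the paper: both reduce the corollary to the existence of a minimal pair of PA degrees and then apply Theorem \ref{every scott set gives an ec group} (equivalently Theorem \ref{char of PA degrees}) to each degree of the pair. The only cosmetic difference is the citation for the minimal pair fact --- the paper routes it through $\omega$-models of $\WKL$ and Scott sets via \cite{Simpson}, whereas you invoke the Jockusch--Soare result on $\Pi^0_1$ classes directly --- but these are the same underlying fact.
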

	
	\begin{rem}
		The $M$ and $N$ in this corollary are necessarily distinct by Theorem \ref{char of PA degrees}.  This gives a different proof that, for any group with unsolvable word problem, there is an existentially closed group into which it does not embed.
	\end{rem}
	
	\begin{proof}[Proof of Corollary \ref{two relatively atomic ec groups below PAs}]
		There is a minimal pair of PA degrees.  (This follows from \cite[Corollary VIII.2.6]{Simpson} and the theorem, due from \cite{Scott} that $\omega$-models of $\WKL$ correspond to what we call Scott sets, and thus to PA degrees, by the discussion at the beginning of the section.)  Thus the only degree that appears in both is $0$.
	\end{proof}
	
	This is a different proof from Hodges', who used a variant of model-theoretic forcing to build $M$ and $N$.
	
	\section{Building relatively atomic existentially closed groups}
	\label{builing relatively atomic ec groups-section}
	The starting point for this section is Corollary \ref{two relatively atomic ec groups below PAs}, which states that there are two existentially closed groups $M$ and $N$ such that any finitely generated group that embeds in both has a solvable word problem.  Since, by Theorem \ref{macintyre-neumann-rips}, the only finitely generated groups that are contained in every existentially closed group are those with solvable word problem, $M$ and $N$ are, in a quantifiable sense, ``as different as possible''.  
	
	As noted in the previous section, Corollary \ref{two relatively atomic ec groups below PAs} has a model-theoretic incarnation: there are existentially closed groups $M$ and $N$ such that if $\Phi$ is a complete, quantifier-free type realised in both, then $\Phi$ is $\exists_1$-isolated.
	
	\begin{defn}
		Two structures $M$ and $N$ in the same language are \emph{relatively quantifier-free atomic} (or simply, \emph{relatively atomic}) if any quantifier-free type realised in both of them is $\exists_1$-isolated.
	\end{defn}

	In this section, we investigate the computability-theoretic strength of constructing relatively atomic existentially closed groups, and prove an upper bound (Theorem \ref{rel atomic groups below ce}).
	
	\begin{theorem}
		\label{relatively atomic ec groups below 0' ce}
		Let $N$ be a $0'$-computable existentially closed group and let $A$ be $0'$-c.e., $A \gneqq_{\mathrm{T}} 0'$.  Then $A$ computes the quantifier-free diagram of an existentially closed group $M$ such that $M$ and $N$ are relatively atomic.
	\end{theorem}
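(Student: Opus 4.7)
The plan is to carry out an $A$-effective Fraïssé construction, enumerating the skeleton $\bK$ of $M$ in stages while simultaneously diagonalising against those finitely generated subgroups of $N$ that have unsolvable word problem. Since $N$ is $0'$-computable and $A \geq_{\mathrm{T}} 0'$, finite tuples from a $0'$-computable listing of $N$ yield an $A$-computable enumeration $\{H_i\}_{i<\omega}$ of the finitely generated subgroups of $N$, with uniformly $A$-computable quantifier-free diagrams. The construction interleaves two families of requirements along a fixed priority ordering: $R_e$ realises the $e$th existential formula consistent over some group already in the current skeleton (securing $A$-EC), while $Q_e$ attempts to keep $H_e$ out of $\bK$. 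By Theorem \ref{macintyre-neumann-rips}, a finitely generated group is $\exists_1$-isolated exactly when its word problem is solvable, so ensuring each $Q_e$ with $H_e$ of unsolvable word problem succeeds is precisely what makes $M$ relatively atomic with $N$.

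The key technical input is the following lemma, which I would prove first: given $G \in \bK$, a quantifier-free $\varphi(\tuple{x}, \tuple{g})$ consistent over $G$, and finitely many $H_{i_1}, \dots, H_{i_k}$ each with unsolvable word problem, there is a finitely generated extension $G' \geq G$ realising $\varphi$ into which none of the $H_{i_j}$ embed. Were this to fail --- say every realising extension contained some $H_{i_j}$ --- then the existential closure of $\varphi$, conjoined with positive equations naming a putative generating tuple for $H_{i_j}$, would $\exists_1$-isolate $H_{i_j}$, contradicting Theorem \ref{macintyre-neumann-rips}. Moreover, $G'$ can be taken finitely presented over $G$, so that the extension step is $A$-effective once the finite list of bad groups to avoid is fixed.

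The main obstacle is that deciding ``is $W(H_e)$ unsolvable?'' is arithmetically too complex for $A$ to resolve directly. To handle this, I would use the $0'$-c.e.\ structure of $A$ to drive a finite-injury priority argument: at each stage, $Q_e$ attempts avoidance of $H_e$ based on the current $0'$-approximation to $A$, and the priority ordering resolves conflicts between $R$- and $Q$-requirements. Injuries correspond to cases where our provisional judgment about $W(H_e)$ was incorrect; these can be absorbed because the key lemma provides alternative realising extensions, and in the ``wrong-way'' case where $W(H_e)$ turns out to be solvable, $Q_e$ is vacuously satisfied since $H_e$ is then $\exists_1$-isolated. The strict inequality $A \gneqq_{\mathrm{T}} 0'$ furnishes the additional degrees of freedom needed for the priority ordering to settle. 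Applying the relativised effective Fraïssé theorem (Theorem \ref{rel effective fraisses theorem}) to the resulting $\bK$ then produces the desired $A$-computable existentially closed $M$, which is relatively atomic with $N$ by construction.
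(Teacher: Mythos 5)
Your architecture differs substantially from the paper's: the paper does not work at the level of the skeleton at all, but builds the quantifier-free diagram of $M$ directly as a theory $T^M$ over a set of Henkin constants, with one requirement $R_{\langle \tuple{c},\tuple{d}\rangle}$ per \emph{pair of tuples} $\tuple{c} \in M$, $\tuple{d} \in N$. Each such requirement is satisfied by a single finite action: find one quantifier-free sentence $\psi(\tuple{c})$ that ``splits'' (both $\psi$ and $\neg\psi$ are consistent with the current finite theory), and commit to the truth value disagreeing with $\qftp^N(\tuple{d})$, subject to permission from the $0'$-c.e.\ set $A$. This decomposition is what makes the finite-injury/permitting machinery applicable, and it is exactly the step your proposal is missing.

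The concrete gap is that your requirement $Q_e$ --- ``keep $H_e$ out of $\Sk(M)$'' --- is not a finitary requirement. Omitting a finitely generated group from an existentially closed group means defeating \emph{every} tuple that might generate a copy of it, and every subsequent $R$-requirement that extends the skeleton re-opens that possibility; so $Q_e$ must impose restraint at infinitely many stages, which a single application of your key lemma (which controls only one extension step) cannot provide. Indeed, by Ziegler's theorem, once some $G$ with $W(H_e)\leq^* W(G)$ enters the skeleton, $H_e$ embeds in $M$ no matter what you do afterwards, so the invariant you would actually need to maintain is a $*$-reducibility condition on all future groups, not mere non-embedding into the current one. Relatedly, the key lemma itself is asserted with only a gesture: ``every realising extension contains some $H_{i_j}$'' does not directly yield an $\exists_1$-isolation of any single $H_{i_j}$ --- one must uniformly locate a generating tuple for $H_{i_j}$ inside every realising extension, which is essentially the content of Macintyre's omitting-types forcing argument, the hard part you are trying to effectivise. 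Finally, the complexity bookkeeping fails: detecting whether $H_e$ embeds into (or is threatened by) the current stage group is a $\Sigma^0_2$-type question, beyond what $A$ or $0'$ can settle stage-by-stage, whereas the paper's splitting test is a consistency check on a finite quantifier-free theory, hence $0'$-decidable, and the permitting verification (``if $R_m$ splits forever without permission, then $0'$ computes $A$'') goes through precisely because of that. To repair your approach you would essentially have to re-index the negative requirements by pairs of tuples rather than by groups, at which point you recover the paper's proof.
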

	
	\MainFour
	
	By Theorem \ref{existence of ec groups is at level of hp}, $M$ and $N$ must both compute $0'$.  Thus, this theorem leaves open the following natural question.
	
	\begin{quest}
		Is there a pair of relatively atomic existentially closed groups both of which are $0'$-computable?
	\end{quest}
	
	The proof of Theorem \ref{relatively atomic ec groups below 0' ce} is a $0'$-c.e.-permitting finite injury argument.  In it, we dovetail three constructions: ensuring the final theory gives an existentially closed group, that the group is relatively atomic with $N$, and ensuring the final theory is complete.  The only part that, on the face of it, requires a more powerful oracle than $0'$ is ensuring the constructed model $M$ is relatively atomic with $N$.  
	
	To guarantee relative atomicity, we meet the requirements 
	\[ R_{\langle \tuple{c}, \tuple{d} \rangle}: \text{``If $\qftp^{M}(\tuple{c}) = \qftp^{N}(\tuple{d})$, then this type is $\exists_1$-isolated''} \]
	for every $\tuple{c}$ in the domain of $M$ and $\tuple{d}$ in the domain of $N$.
	
	To satisfy $R_{\langle \tuple{c}, \tuple{d} \rangle}$, we need to find a quantifier-free formula $\varphi_i$ with parameters from the domain of $M$ which guarantees $\qftp^{M}(\tuple{c}) \neq \qftp^{N}(\tuple{d})$.  However, determining whether a given such $\varphi_i$ can be used to satisfy a given $R_{\langle \tuple{c}, \tuple{d} \rangle}$ is seemingly a $0''$ question:
	
	\emph{Does there exist a consequence $\theta(\tuple{c})$ of $\varphi_i \land T_s$ or $\neg\varphi_i \land T_s$ such that $N \models \neg\theta(\tuple{d})$?}
	
	Thus we cannot, if we wish for the construction to remain below $0''$, apply the na\"ive strategy of inductively checking if $\varphi_i$ and $\neg\varphi_i$ are both consistent and, if so, choosing the highest-priority $R_{\langle \tuple{c}, \tuple{d} \rangle}$ they can satisfy.
	
	Instead, the requirement $R_{\langle \tuple{c}, \tuple{d} \rangle}$ will ask for permission from some $0'$-c.e.\ set $A \gneqq_{\mathrm{T}} 0'$ to add a sentence $\varphi(\tuple{c})$ to the theory which witnesses that $\tuple{c}$ and $\tuple{d}$ have different quantifier-free types.  We will thus guarantee that the final theory is $A$-computable.

	\begin{proof}[Proof of Theorem \ref{relatively atomic ec groups below 0' ce}]
		Let $T$ be the theory of groups in the language $L = \{1, \cdot, \bullet^{-1} \}$.  Let $A$ be as in the theorem with $0'$-enumeration $\{ a_s : s < \omega\}$.  For simplicity, we will assume that, for $s \equiv_3 1$, $a_s = a_{s + 1} = a_{s + 2}$.  Let $C$ be a countable set of new constant symbols, and let $T^N = \Delta^{\mathrm{qf}}(N)$ via some effective assignment of the constants of $C$ to $N$. Let $\seq{\varphi_m(\tuple{x})}{m < \omega}$ enumerate the quantifier-free formulas and $\seq{\psi_i}{i < \omega}$ the positive, quantifier-free sentences in $L(C)$.  We will build a theory $T^M$ satisfying the requirements below.
		
		\begin{itemize}
			\item $\mathrm{Group}$: ``The structure determined by $T^M$ models the axioms of group theory.''
			\item $EC_m$: ``If $\exists \tuple{x} \varphi_m(\tuple{x})$ is consistent with $T^M$, then $T^M \models \varphi_m(\tuple{c})$ for some set of constants $\tuple{c}$.''
			\item $R_{\langle \tuple{c}, \tuple{d} \rangle}$: ``If $\qftp^{M}(\tuple{c}) = \qftp^{N}(\tuple{d})$, then this type is $\exists_1$-isolated.''
		\end{itemize}
		
		(Here, $\qftp^M(\tuple{c})$ and $\qftp^N(\tuple{d})$ are the quantifier-free types of $\tuple{c}$ and $\tuple{d}$ in $T^M$ and $T^N$ respectively.  As we will guarantee that $T^M$ is quantifier-free-complete, these will be complete quantifier-free types.)
		
		We use the ordering $\mathrm{Group}, R_0, EC_0, R_1, EC_1, \ldots$ of the requirements.  Throughout, we will refer to the $k$th element of this list as the ``$k$th requirement'', starting with $\mathrm{Group}$ as the $0$th requirement.  Thus, $R_m$ is the ($2m+1$)st requirement and $EC_m$ is the ($2m+2$)nd requirement.  In addition, we will define along the way an auxiliary function $r( k, s)$ that ``restrains'' how the $k$th requirement is allowed to act at stage $s$.\\
		
		\noindent\textit{Construction:} We will build $T^M$ inductively.  We will variously think of $T^M_s$ as a set of sentences $\{ \psi_i : i \in I \} \cup \{ \neg \psi_j : j \in J \}$ or as the corresponding partial indicator function which is $1$ on $I$, $0$ on $J$, and undefined elsewhere.  We establish the following convention. 
		
		\begin{defn}
			Set
			\[ \lceil T^M_s \rceil = \max \{ i : T^M_s(i)\ \text{is defined} \} \qedhere \]
		\end{defn}
		
		In addition, throughout we will write $\hat{T}^M_s$ for $T^M_s$ together with the axioms of group theory.
		
		\textit{Stage 0}: $T^M_0 = \{ \text{``$c_i \cdot e = e \cdot c_i = c_i$'';\ ``$c_i \cdot c_i^{-1} = e$'';\ ``$(c_i \cdot c_j) \cdot c_k = c_i \cdot (c_j \cdot c_k)$''}: c_i, c_j, c_k \in C \}$.  This will ensure that $M$ is a group\footnote{We will guarantee that in our final model every element is labelled by a constant: in the process of checking that the model satisfies existential closure, for any word $w$ in $C$, we will choose a constant $c \in C$ to realise the consistent existential $\exists x (w = x)$.  Thus this does in fact ensure that the final structure is a group}.  While we will not explicitly mention it, we will assume that these sentences are in $T^M_s$ for every $s$.  It will be clear that this will not affect the construction.  
		
		Initialise $r(k, 0) = r(k, 1) = 0$ for every $k < \omega$.  
		
		\textit{Stage $s \equiv_3 1$:}  In this stage, we take another step towards ensuring that $M$ is existentially closed.  Let $m$ be minimal such that $r(2m+2, s) = 0$.  Using $0'$, check the consistency of $\exists \tuple{x} \varphi_m(\tuple{x})$ with $\hat{T}^M_s$.  If it is consistent, set $T^M_{s+1} = T^M_s \cup \{ \varphi_m(\tuple{c}) \}$ for constants $\tuple{c}$ not used since stage 0 and such that $\varphi_m(\tuple{c}) > \max\{ r(k, s) : k < 2m+2 \}$.  If it is not consistent, set $T^M_{s+ 1} = T^M_s$.
		
		Define $r(2m+2, s+1) = \lceil T^M_{s+1} \rceil$, $r(k, s + 1) = 0$ for $k > 2m+2$, and $r(k, s+1) = r(k, s)$ for $k < 2m+2$.
		
		\textit{Stage $s \equiv_3 2$:} In this stage, we take another step towards ensuring that $M$ is relatively atomic to $N$.  We start with a definition which captures what we are looking for to satisfy a $R_m$-requirement. 
		
		\begin{defn}			
			For a finite theory $\tuple{p}$ in $L(C)$, a requirement $R = R_{\langle \tuple{c}, \tuple{d}\rangle}$, and $s < \omega$, say a quantifier-free sentence \emph{$\psi$ $s$-splits $R$ mod $\tuple{p}$} if the constants of $\psi$ are among the $\tuple{c}$, and $\tuple{p} \cup \{ \psi \}$ and $\tuple{p} \cup \{ \neg \psi \}$ are both consistent.  We call $\psi$ the \emph{witness} to the splitting of $R$.
		\end{defn}
		
		For each $R_m$, $m \leq s$, with $r(2m + 1, s) = 0$, set $\ell_m = \max\left\{a_s, \max\{r(k, s) : k < 2m + 1\}\right\}$ and $\tuple{p}_m = \hat{T}^s_M \upharpoonright \ell_m$.  For each $\psi_i$ with $\ell_m \leq i \leq \ell_m + s$, check if $\psi_i$ $s$-splits $R_m \! \mod \tuple{p}_m$.  If no splitting is found, let $T^M_{s + 1} = T^M_s$ and $r(k, s + 1) = r(k, s)$.
		
		If a splitting is found, let $m = \langle \tuple{c}, \tuple{d} \rangle$ be minimal such that $R_m$ is $s$-split and let $\psi$ witness the splitting.  If $\psi(\tuple{d} / \tuple{c}) \in T^N$, set:
		\begin{align*}
			T^M_{s + 1} &= \tuple{p}_m \cup \{ \neg \psi \}.
		\end{align*}
		Otherwise, $\neg \psi(\tuple{c} / \tuple{d}) \in T^N$, so set:
		\begin{align*}
			T^M_{s + 1} &= \tuple{p}_m \cup \{ \psi \}.			
		\end{align*}
		Set $r(k, s+1) = 0$ for $k > 2m + 1$.  Set $r(2m +1, s+1) = \lceil T^M_{s+1} \rceil$ and $r(k, s+1) = r(k, s)$ otherwise.
		
		\textit{Stage $s \equiv_3 0$:} Let $i$ be minimal such that $T_s^M(i)$ is undefined.  Set $T^M_{s+1} = T^M_s \cup \{ \psi_i \}$ if this is consistent and $T^M_{s + 1} = T^M_s \cup \{ \neg \psi_i \}$ otherwise.  Set $r(k, s+1) = r(k, s)$ for every $k$.\\
		
		This finishes the construction.  Set $T^M = \lim_s T^M_s$.\\
		
		\noindent\textit{Verification:}  The stages $s \equiv_3 2$ ensure that $T^M$ is quantifier-free-complete.  Thus we may consider the structure $M$ that $T^M$ determines.  Notice that many elements of $C$ may refer to the same element of $M$; this will not affect the computability of $M$ as these equalities are contained in $T^M$.  Stage 0 guarantees that $M$ is a group.  We show that $M$ is:
		\begin{itemize}
			\item existentially closed
			\item computable in $A$
			\item relatively atomic with $N$
		\end{itemize}
		
		Say the $k$th requirement \emph{acts at stage $s$} if $r(k, s) = 0$ and $r(k, s+1) \neq 0$ and that it is \emph{satisfied at stage $s$} if $r(k, s) > 0$.  Note that no requirement acts infinitely often: once a requirement has acted, it remains satisfied unless a higher priority requirement acts.
		
		Thus, to check that $EC_m$ holds, consider the minimal $s_m$ such that all higher priority requirements do not act at a later stage.  If $EC_m$ is already satisfied, it will remain satisfied for the rest of the construction as no higher priority requirements act again.
		
		If $EC_m$ is not satisfied, the least $s \geq s_m$ with $s \equiv_3 2$ will ensure that $EC_m$ is satisfied, and, again, it will remain so for the rest of the construction.  
		
		This shows that $M$ is existentially closed.
		
		To compute $M$ from $A$, let $\psi_i$ be a positive, quantifier-free sentence in $L(C)$.  With an oracle for $A$, find a stage $s$ such that $A_s \upharpoonright i+1  = A \upharpoonright i+1$.  From that point on, $T^M_s \upharpoonright i + 1$ changes only on stages $s \equiv_3 2$ and $s \equiv_3 0$ (both of which are $0'$-computable) and will never become undefined or change the value of $j \leq i$ which is already defined.  Thus, $T_M^s(i)$ will eventually be defined and its value will never change.  This gives an $A$-computable way of determining if $\psi_i \in T^M$.
		
		Finally, we show that $M$ and $N$ are relatively atomic.  Suppose not, and consider the minimal $m = \langle \tuple{c}, \tuple{d} \rangle$ such that $\qftp^M(\tuple{c}) = \qftp^N(\tuple{d})$, but they are not $\exists_1$-isolated.  We will show this implies $A \leq_T 0'$, a contradiction.  Let $s$ be a stage by which all requirements of lower priority than $R_m$ have stopped acting.  Let $\tuple{p}_t = \max\{ r(k, t) : k < m \}$.  By assumption that lower priority requirements have stopped acting by stage $s$, $\tuple{p}_t = \tuple{p}_s =: \tuple{p}$ for all $t \geq s$.  Moreover, $\tuple{p} \subseteq T^M$, since none of the remaining, unsatisfied requirements are high enough priority to change $\tuple{p}$.  Thus, $R_m$ splits mod $\tuple{p}$.  To determine if $x \in A$, let $i > x$ with $\psi_i$ witnessing the splitting of $R \mod \tuple{p}$.  Since $R_m$ is not satisfied, at a stage $t \equiv_3 2$ with $t \geq \max\{s, i\}$, it will see the splitting $\mod \tuple{p}$ but not get permission to act on it.  Thus, $A_t \upharpoonright i = A \upharpoonright i$.  Hence, we can determine if $x \in A$, contradicting our choice of $A$, so $M$ and $N$ must be relatively atomic.
	\end{proof}
	
	The proof can be easily modified to give the following relativisation.
	
	\begin{theorem}
		Let $N$ be an $\Tdega$-computable existentially closed group, and let $B$ be $\Tdega$-c.e., $B \gneqq_{\mathrm{T}} \Tdega$.  Then $B$ computes the quantifier-free diagram of an existentially closed group $M$ that is relatively atomic with $N$.
	\end{theorem}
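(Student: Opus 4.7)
The plan is to relativize the proof of Theorem \ref{relatively atomic ec groups below 0' ce} wholesale, substituting $\Tdega$ for $0'$ and the $\Tdega$-c.e.\ set $B$ for the $0'$-c.e.\ set $A$. First I would observe that the hypotheses are self-reinforcing: since $N$ is a $\Tdega$-computable existentially closed group, Theorem \ref{existence of ec groups is at level of hp} gives $\Tdega \geq_{\mathrm{T}} 0'$, so that the $\Sigma^0_1$ consistency checks for finite collections of quantifier-free group-theoretic sentences used in the original proof remain $\Tdega$-computable. Moreover $T^N = \Delta^{\mathrm{qf}}(N)$ is now $\Tdega$-decidable, which is exactly what is needed to examine candidate splittings.

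Next I would run the same three-phase finite-injury construction with the same requirements $\mathrm{Group}$, $R_{\langle \tuple{c},\tuple{d}\rangle}$, $EC_m$ under the same priority ordering. Stages $s \equiv_3 0$ extend completeness using $\Tdega$-computable consistency; stages $s \equiv_3 1$ handle existential closure by testing consistency of $\exists \tuple{x}\varphi_m(\tuple{x})$ with $\hat T^M_s$ using $\Tdega$; and stages $s \equiv_3 2$ search for splittings of the highest-priority unsatisfied $R_m$ permitted by the $\Tdega$-computable enumeration $\{b_s\}$ of $B$, exactly as $\{a_s\}$ was used in the unrelativized case. The auxiliary restraint function $r(k,s)$ is defined as before.

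The verification is essentially unchanged. That $M$ is a group, is existentially closed, and has a $B$-computable quantifier-free diagram all follow from the same finite-injury bookkeeping, now with $\Tdega$ wherever $0'$ appeared. The one piece with genuine content is relative atomicity: assume some requirement $R_{\langle\tuple{c},\tuple{d}\rangle}$ fails. Fixing a stage $s$ after which all higher-priority requirements have stopped acting and the common prefix $\tuple{p}$ has stabilized, one extracts $B$ from $\Tdega$ exactly as in the original proof: to decide whether $x \in B$, take $i > x$ with $\psi_i$ witnessing the splitting modulo $\tuple{p}$, run the construction (using $\Tdega$) until a stage $t \equiv_3 2$ past $\max\{s,i\}$ at which $R_{\langle\tuple{c},\tuple{d}\rangle}$ sees the splitting but is denied permission by $B$, and read off $B_t \upharpoonright i = B \upharpoonright i$. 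This contradicts $B \gneqq_{\mathrm{T}} \Tdega$.

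The main obstacle, such as it is, is simply checking that every consistency question and every oracle query used in the original argument is $\Tdega$-computable after the substitution. The only non-obvious point is consistency of finite sets of quantifier-free sentences with group theory, which is $\Sigma^0_1$ and hence covered by $\Tdega \geq_{\mathrm{T}} 0'$; once that is observed, the rest of the permitting argument is uniform in the oracle and no new ingredients are required.
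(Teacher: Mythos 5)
Your proposal is correct and matches the paper's intent exactly: the paper gives no separate argument for this relativisation, stating only that the proof of Theorem \ref{relatively atomic ec groups below 0' ce} "can be easily modified", and your wholesale substitution of $\Tdega$ for $0'$ and $B$ for $A$ is precisely that modification. Your observation that $\Tdega \geq_{\mathrm{T}} 0'$ (via Theorem \ref{existence of ec groups is at level of hp}) keeps the $\Sigma^0_1$ consistency checks $\Tdega$-computable is the one point worth making explicit, and you make it.
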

	
	\vfill \pagebreak
	\bibliographystyle{alpha}
	\bibliography{on-effective-constructions.bib}
	
\end{document}